\documentclass[11pt,twoside]{article}
\newcommand{\ifims}[2]{#1}   
\newcommand{\ifAMS}[2]{#1}   
\newcommand{\ifau}[3]{#2}  

\newcommand{\ifbook}[2]{#1}   

\ifbook{

  }
  {

  }

\def\thetitle{Two convergence results for an alternation maximization procedure}
\def\theruntitle{Convergence of an alternation procedure}

\def\theabstract{
Andresen and Spokoiny's (2013) ``critical dimension in semiparametric estimation`` provide a technique for the finite sample analysis of profile M-estimators. This paper uses very similar ideas to derive two convergence results for the alternating procedure to approximate the maximizer of random functionals such as the realized log likelihood in MLE estimation. We manage to show that the sequence attains the same deviation properties as shown for the profile M-estimator in Andresen and Spokoiny (2013), i.e. a finite sample Wilks and Fisher theorem. Further under slightly stronger smoothness constraints on the random functional we can show nearly linear convergence to the global maximizer if the starting point for the procedure is well chosen.}

\def\kwdp{62F10}
\def\kwds{62J12, 62F25, 62H12}

\def\thekeywords{alternating procedure, EM-algorithm, M-estimation, profile maximum likelihood, local linear approximation, spread, local concentration}

\def\authorb{Vladimir Spokoiny}
\def\runauthorb{spokoiny, v.}
\def\addressb{
    Weierstrass Institute and HU Berlin, \\ Moscow Institute of 
    Physics and Technology \\
    Mohrenstr. 39, \\
    10117 Berlin, Germany}
\def\emailb{spokoiny@wias-berlin.de}
\def\affiliationb{Weierstrass-Institute, Humboldt University Berlin, and
Moscow Institute of Physics and Technology}
\def\thanksb
{The author is partially supported by
Laboratory for Structural Methods of Data Analysis in Predictive Modeling, MIPT, 
RF government grant, ag. 11.G34.31.0073.
Financial support by the German Research Foundation (DFG) 
through the CRC 649 ``Economic Risk'' is gratefully acknowledged. 
}

\def\authora{Andreas Andresen}
\def\runauthora{andresen, a.\,}
\def\addressa{
    \\ Weierstrass-Institute, \\
    Mohrenstr. 39, \\
    10117 Berlin, Germany     
    }
\def\emaila{andresen@wias-berlin.de}
\def\affiliationa{Weierstrass-Institute}
\def\thanksa{The author is supported by Research Units 1735 
"Structural Inference in Statistics: Adaptation and Efficiency"
}




\usepackage{amsmath,amssymb,amsthm}
\usepackage{natbib}
\usepackage{epsfig,graphicx}
\usepackage{comment}
\usepackage{color}
\usepackage{srcltx}
\usepackage[mathscr]{eucal}
\usepackage[math]{easyeqn}
\usepackage{etoolbox}

\ifims{
\textheight=23cm
\textwidth=14.8cm
\topmargin=0pt
\oddsidemargin=1.0cm
\evensidemargin=1.0cm
\linespread{1.3}
\renewenvironment{abstract}
    {\centerline{\textbf{Abstract}}\bigskip
      \begin{center}
       \begin{minipage}{11cm}
        \begin{small}
    }
    {   \end{small}
       \end{minipage}
      \end{center}
     \bigskip
    }

}{ 
}

\numberwithin{equation}{section}
\numberwithin{figure}{section}
\newcounter{example}[section]
\numberwithin{example}{section}
\newcounter{remark}[section]
\numberwithin{remark}{section}
\newtheorem{theorem}{Theorem}[section]
\newtheorem{proposition}[theorem]{Proposition}
\newtheorem{lemma}[theorem]{Lemma}
\newtheorem{corollary}[theorem]{Corollary}

\newtheorem{exmp}[example]{Example}
\newtheorem{rmrk}[remark]{Remark}
\newenvironment{example}{\begin{exmp}\rm}{\end{exmp}}
\newenvironment{remark}{\begin{rmrk}\rm}{\end{rmrk}}

\bibliographystyle{apalike}

\begin{document}
\thispagestyle{empty}
\ifims{
\title{\thetitle}
\ifau{ 
  \author{
    \authora
    \ifdef{\thanksa}{\thanks{\thanksa}}{}
    \\[5.pt]
    \addressa \\
    \texttt{ \emaila}
  }
}
{  
  \author{
    \authora
    \ifdef{\thanksa}{\thanks{\thanksa}}{}
    \\[5.pt]
    \addressa \\
    \texttt{ \emaila}
    \and
    \authorb
    \ifdef{\thanksb}{\thanks{\thanksb}}{}
    \\[5.pt]
    \addressb \\
    \texttt{ \emailb}
  }
}
{   
  \author{
    \authora
    \ifdef{\thanksa}{\thanks{\thanksa}}{}
    \\[5.pt]
    \addressa \\
    \texttt{ \emaila}
    \and
    \authorb
    \ifdef{\thanksb}{\thanks{\thanksb}}{}
    \\[5.pt]
    \addressb \\
    \texttt{ \emailb}
    \and
    \authorc
    \ifdef{\thanksc}{\thanks{\thanksc}}{}
    \\[5.pt]
    \addressc \\
    \texttt{ \emailc}
  }
}

\maketitle
\pagestyle{myheadings}
\markboth
 {\hfill \textsc{ \small \theruntitle} \hfill}
 {\hfill
 \textsc{ \small
 \ifau{\runauthora}
      {\runauthora and \runauthorb}
      {\runauthora, \runauthorb, and \runauthorc}
 }
 \hfill}
\begin{abstract}
\theabstract
\end{abstract}

\ifAMS
    {\par\noindent\emph{AMS 2000 Subject Classification:} Primary \kwdp. Secondary \kwds}
    {\par\noindent\emph{JEL codes}: \kwdp}

\par\noindent\emph{Keywords}: \thekeywords
} 
{ 
\begin{frontmatter}
\title{\thetitle}


\runtitle{\theruntitle}

\ifau{ 
\begin{aug}
    \author{\authora\ead[label=e1]{\emaila}}
    \address{\addressa \\
     \printead{e1}}
\end{aug}

 \runauthor{\runauthora}
\affiliation{\affiliationa} }
{ 
\begin{aug}
    \author{\authora\ead[label=e1]{\emaila}\thanksref{t21}}
    \and
    \author{\authorb\ead[label=e2]{\emailb}\thanksref{t22}}
    
    \address{\addressa \\
     \printead{e1}}
    \address{\addressb \\
     \printead{e2}}
    \thankstext{t21}{\thanksa}
    \thankstext{t22}{\thanksb}
    \affiliation{\affiliationa, \affiliationb} 
    \runauthor{\runauthora and \runauthorb}
\end{aug}
} 
{ 
\begin{aug}
    \author{\authora\ead[label=e1]{\emaila}\thanksref{t21}}
    \and
    \author{\authorb\ead[label=e2]{\emailb}\thanksref{t22}}
    \and
    \author{\authorc\ead[label=e3]{\emailc}\thanksref{t23}}
    
    \address{\addressa \\
     \printead{e1}}
    \address{\addressb \\
     \printead{e2}}
    \address{\addressc \\
     \printead{e3}}
    \thankstext{t21}{\thanksa}
    \thankstext{t22}{\thanksb}
    \thankstext{t23}{\thanksc}
    \affiliation{\affiliationa, \affiliationb, \affiliationc} 
    \runauthor{\runauthora, \runauthorb, and \runauthorc}
\end{aug}}

\begin{abstract}
\theabstract
\end{abstract}

\begin{keyword}[class=AMS]
\kwd[Primary ]{\kwdp}
\kwd[; secondary ]{\kwds}
\end{keyword}

\begin{keyword}
\kwd{\thekeywords}
\end{keyword}

\end{frontmatter}
} 

\def\ND{\cc{N}}
\def\Bernoulli{\mathrm{Bernoulli}}
\def\Vola{\mathrm{Vola}}
\def\Poisson{\mathrm{Poisson}}
\def\ag{\mathrm{ag}}
\def\glob{\operatorname{glob}}
\def\blk{\operatorname{block}}
\def\lin{\operatorname{lin}}
\def\cond{\, \big| \,}

\def\rdl{\epsilon}
\def\rd{\bb{\rdl}}
\def\rddelta{\delta}
\def\rdomega{\varrho}
\def\rddeltab{\rddelta^{*}}
\def\rhorb{\rhor^{*}}

\def\wv{\bb{w}}
\def\varthetav{\bb{\vartheta}}
\def\Lr{\breve{L}}
\def\zetavr{\breve{\zetav}}
\def\etavr{\breve{\etav}}
\def\xivr{\breve{\xiv}}

\def\rdb{\rd}
\def\rdm{\underline{\rdb}}

\def\taub{\tau_{\rdb}}
\def\taum{\tau_{\rdm}}
\def\kappab{\kappa_{\rd}}
\def\deltab{\delta_{\rd}}

\def\taubGP{\tau_{\rdb,\GP}}
\def\taumGP{\tau_{\rdm,\GP}}
\def\kappabGP{\kappa_{\rd,\GP}}
\def\deltabGP{\delta_{\rd,\GP}}
\def\nubm{\nu_{\rd}}
\def\uub{u_{\rd}}
\def\uubGP{u_{\rd,\GP}}
\def\nubmGP{\nu_{\rd, G}}

\def\rG{\rd,\GP}

\def\LinSp{\mathrm{L}}
\def\Id{I\!\!\!I}
\def\Ind{\operatorname{1}\hspace{-4.3pt}\operatorname{I}}

\def\BG{\mathcal{R}}
\def\bg{r}
\def\fmup{\phi}
\def\rg{r}
\def\uc{u_{c}}
\def\muc{\mu_{c}}
\def\mud{\mu_{0}}
\def\xxd{\xx_{0}}
\def\yyd{\yy_{0}}
\def\gmd{\gm_{0}}

\def\ms{m^{*}}
\def\Inv{A}
\def\InvT{\Inv^{\T}}
\def\Invt{\tilde{\Inv}}

\def\ssize{N}
\def\nsize{{n}}

\def\rhor{\omega}

\def\LT{L}
\def\LGP{\LT_{\GP}}
\def\La{\mathbb{L}}
\def\Lab{\La_{\rdb}}
\def\Lam{\La_{\rdm}}

\def\DP{D}
\def\DPc{\DP_{0}}
\def\DPb{\DP_{\rdb}}
\def\DPm{\DP_{\rdm}}

\def\LabGP{\La_{\rdb,\GP}}
\def\LamGP{\La_{\rdm,\GP}}

\def\DPbGP{\DP_{\rdb,\GP}}
\def\DPmGP{\DP_{\rdm,\GP}}
\def\riskbGP{\riskt_{\rdb,\GP}}

\def\gmi{\mathtt{b}}
\def\gmiid{\mathtt{g}_{1}}
\def\kullbi{\Bbbk}
\def\Thetasi{\Theta_{\loc}}
\def\rri{\mathtt{u}}
\def\rris{\rri_{0}}

\def\Ipc{\bb{\mathrm{f}}}
\def\IF{\Bbb{F}}
\def\IFc{\IF_{0}}
\def\IFb{\IF_{\rdb}}
\def\IFm{\IF_{\rdm}}

\def\DF{\cc{D}}
\def\DFc{\DF_{0}}
\def\DFb{\DF_{\rdb}}
\def\DFm{\breve{\DF}_{\rd}}
\def\DFm{\DF_{\rdm}}

\def\DPr{\breve{\DP}}
\def\VF{\cc{V}}
\def\VFc{\VF_{0}}

\def\HHc{\HH_{0}}
\def\HHb{\HH_{\rd}}
\def\HHm{\HH_{\rdm}}

\def\xib{\xi^{*}}
\def\xivb{\xiv_{\rdb}}
\def\xivm{\xiv_{\rdm}}
\def\CAm{\underline{\CA}}
\def\CAb{\CA}

\def\penr{\operatorname{pen}}
\def\pen{\mathfrak{t}}
\def\PEN{\operatorname{PEN}}
\def\RSS{\operatorname{RSS}}
\def\med{\operatorname{med}}

\def\ex{\mathrm{e}}
\def\entrl{\mathbb{Q}}
\def\entrlb{\entrl}
\def\entr{\entrl}

\def\kullb{\cc{K}} 
\def\kullbc{\kullb^{c}}

\def\gm{\mathtt{g}}
\def\gmc{\gm_{c}}
\def\gmb{\gm}
\def\gmbm{\gmb_{1}}

\def\yy{\mathtt{y}}
\def\yyc{\yy_{c}}
\def\xx{\mathtt{x}}
\def\xxc{\xx_{c}}
\def\tc{t_{c}}

\def\alp{\alpha}
\def\alpn{\rho}
\def\gmu{\mathfrak{a}}

\def\losst{\varrho}
\def\loss{\wp}
\def\lossp{u}
\def\closs{g}

\def\riskt{\cc{R}}
\def\emprisk{\ell}
\def\bias{b}
\def\bern{q}

\def\TT{\nsize}

\def\Pone{P}
\def\Pf{\P_{f(\cdot)}}
\def\Ef{\E_{f(\cdot)}}
\def\Ps{\P_{\thetas}}
\def\Es{\E_{\thetas}}
\def\Pu{\P_{\upsilons}}
\def\Eu{\E_{\upsilons}}

\def\Pvs{\P_{\thetavs}}
\def\Evs{\E_{\thetavs}}

\def\UPd{w}
\def\nunup{\nu_{1}}
\def\rru{\rr_{1}}
\def\rups{\rr_{0}}
\def\rupsb{\rups^{*}}
\def\rrf{\rr^{\flat}}

\def\smooths{\mathbb{S}}
\def\smooth{\smooths_{1}}

\def\elli{\bar{\ell}}

\def\K{K}

\def\Psir{\breve{\Psi}}

\def\af{a}
\def\afs{\af^{*}}

\def\kapla{\varkappa}

\newcommand{\mlew}[1]{\tilde{\thetav}_{#1}}
\newcommand{\mlea}[1]{\hat{\thetav}_{#1}}
\newcommand{\mluw}[1]{\tilde{\theta}_{#1}}
\newcommand{\mlua}[1]{\hat{\theta}_{#1}}
\newcommand{\penm}[1]{\boldsymbol{m}_{#1}}

\def\Pdom{\mu_{0}}
\def\PDOM{\bb{\mu}_{0}}
\def\EDOM{\E_{0}}

\def\mk{m}
\def\Mk{\cc{M}}
\def\SV{\cc{S}}

\def\Cs{E}
\def\Csd{\Cs^{\circ}}
\def\Ca{A}
\def\CS{\cc{E}}
\def\CA{\cc{A}}
\def\CAb{\CA_{\rd}}
\def\CAC{\CA_{\CoFu}}

\def\Ccb{m_{\rdb}}
\def\Ccm{m_{\rdm}}
\def\CcbGP{m_{\rdb,\GP}}
\def\CcmGP{m_{\rdm,\GP}}

\def\etas{\eta^{*}}

\def\omegav{\bb{\phi}}
\def\omegavs{\omegav^{*}}
\def\omegavc{\omegav'}

\def\nuvs{\nuv^{*}}
\def\nuvc{\nuv'}

\def\nunu{\nu_{0}}
\def\numu{\nu_{1}}
\def\nupi{\nu^{+}}
\def\nubu{\beta}

\def\nus{\nu}
\def\nusb{\nus}
\def\nusr{\nus^{\bracketing}}
\def\Nusb{\mathbb{N}}
\def\Nusr{\mathbb{N}^{\diamond}}

\def\dist{d}
\def\distd{\mathfrak{a}}

\def\hatk{\kappa}
\def\ko{k^{\circ}}

\def\qqq{\mathfrak{q}}
\def\ppp{{s}}
\def\Cqq{C(\qqq)}
\def\Cqqb{C^{\diamond}(\qqq)}
\def\Crho{C(\mrho)}
\def\Cqqm{\log(4)}
\def\Cqpr{(\qqq \rrp + \dimp / 2)}

\def\Cdima{\mathfrak{e}_{0}}
\def\Cdimb{\mathfrak{e}_{1}}
\def\cdima{\mathfrak{c}_{0}}
\def\cdimb{\mathfrak{c}_{1}}
\def\cdim{\mathfrak{c}}

\def\rdomega{\varrho}
\def\deltaD{\delta}
\def\alphai{\alpha_{1}}
\def\alphaii{\alpha_{2}}
\def\alphaiii{\alpha_{3}}
\def\alphaiv{\alpha_{4}}

\def\err{\diamondsuit}
\def\errbm{\bar{\err}_{\rdomega}}
\def\errm{\err_{\rdm}}
\def\errb{\err_{\rdb}}

\def\errbGP{\err_{\rdomega,\GP}}
\def\errmGP{\err_{\rdm,\GP}}
\def\errbmGP{\bar{\err}_{\rd,\GP}}

\def\errs{\err_{\rdomega}^{*}}
\def\deltas{\alpha}

\def\xivbGP{\xiv_{\rdb,\GP}}
\def\xivmGP{\xiv_{\rdm,\GP}}

\def\SP{S}
\def\GP{G}
\def\GPt{\GP_{0}}
\def\GPn{\GP_{1}}
\def\gp{g}
\def\gs{s}

\def\errbGP{\err_{\rdb,\GP}}
\def\errmGP{\err_{\rdm,\GP}}
\def\errpmGP{\err_{\GP}^{\pm}}

\def\LCS{\cc{C}}

\def\DPGP{\DP_{\GP}}
\def\thetavsGP{\thetavs_{\GP}}

\def\LL{\cc{L}}
\def\LLb{\LL^{*}}
\def\LLh{\cc{L}}

\def\YY{\cc{Y}}
\def\LP{L^{\circ}}

\def\modcnrd{\mathfrak{A}}

\def\pens{\pi}
\def\pnn{\mathfrak{g}}
\def\pnnd{\mathfrak{u}}
\def\pnndGP{\pnnd_{\GP}}

\def\confpr{\mathfrak{c}}
\def\confprb{\confpr^{*}}

\def\pn{\pens^{*}}
\def\penInt{\mathfrak{D}}
\def\penH{\mathbb{H}}
\def\pmu{\mathfrak{u}}
\def\Closs{\cc{R}}

\def\dimp{p}
\def\riskb{\riskt_{\rdb}}
\def\dimpp{\dimp+1}
\def\BB{I\!\!B}
\def\vA{\mathtt{v}}

\def\deficiency{\Delta}
\def\spread{\Delta}
\def\dimtotal{\dimp^{*}}

\def\thetav{\bb{\theta}}
\def\thetavs{\thetav^{*}}
\def\thetavc{\thetav'}
\def\thetavd{\thetav^{\circ}}
\def\thetavdc{\thetav^{\sharp}}
\def\dthetavs{\thetav,\thetavs}

\def\thetas{\theta^{*}}
\def\thetac{\theta'}
\def\thetad{\theta^{\circ}}
\def\thetab{\theta^{\dag}}
\def\thetavb{\thetav^{\dag}}

\def\vtheta{\vartheta}
\def\vthetav{\bb{\vtheta}}
\def\prior{\Pi}

\def\Gam{\Xi}
\def\Gam{\mathcal{S}}
\def\RG{R}
\def\Psu{\Upsilon}
\def\Phim{\breve{\Phi}}

\def\Proj{P}

\def\gammavs{\gammav^{*}}
\def\gammavd{\gammav^{\circ}}
\def\etavs{\etav^{*}}
\def\etavd{\etav^{\circ}}
\def\etavc{\etav'}

\def\taus{\tau_{0}}
\def\taup{\lceil \tau \rceil}

\def\sigmas{{\sigma^{*}}}
\def\Sigmas{\Sigma_{0}}

\def\upsilonc{\upsilon'}
\def\upsilond{\upsilon^{\circ}}
\def\upsilonp{{\upsilon}^{*}}
\def\upsilonm{{\upsilon}_{*}}
\def\upsilonvs{\upsilonv^{*}}
\def\upsilons{\upsilon^{*}}
\def\upsilonb{\bar{\upsilon}}
\def\upsilonvd{\upsilonv^{\circ}}

\def\ups{\bb{\upsilon}}
\def\upss{\ups_{0}}
\def\upsc{\ups^{\prime}}
\def\upsd{\ups^{\circ}}
\def\upsdc{\ups^{\sharp}}
\def\upsdu{\ups^{\flat}}

\def\Ups{\varUpsilon}
\def\Upsd{\Ups^{\circ}}
\def\Upss{\Ups_{\circ}}
\def\UpsP{\Ups^{c}}

\def\Thetas{\Theta_{0}}
\def\ThetasGP{\Theta_{0,\GP}}
\def\varthetav{\bb{\vartheta}}

\def\glink{g}

\def\fvs{\fv}
\def\fs{f}
\def\fb{\fv^{\dag}}

\def\uc{\uv'}
\def\ud{\uv^{\circ}}
\def\uvs{\uv^{*}}
\def\us{u^{*}}
\def\vs{v^{*}}

\def\reps{\epsilon}
\def\eps{\epsilon}

\def\repsc{\reps_{0}}
\def\repsb{\reps^{*}}
\def\repsg{g}

\def\lu{\delta}
\def\lub{\bar{\lu}}

\def\Uu{U}
\def\UU{\cc{Y}}
\def\UUM{\cc{M}}
\def\UP{\cc{U}}
\def\up{\mathfrak{u}}

\def\VP{V}
\def\VPc{\VP_{0}}
\def\VPV{\cc{U}}
\def\VPVc{\cc{\VPV}_{0}}
\def\VPGP{\VP_{\GP}}
\def\VPSP{\VP_{\SP}}

\def\VV{H}
\def\GV{\cc{G}}
\def\GVS{S}

\def\VVb{\VV^{*}}
\def\VVc{\VV_{0}}
\def\vv{\bb{h}}
\def\vva{g}
\def\vp{\mathbf{v}}
\def\vpc{\vp_{0}}
\def\VVca{\VV}
\def\Vtt{H}

\def\DG{D}

\def\Vn{V_{0}}
\def\vn{v_{0}}

\def\norm{\mathfrak{c}}
\def\normc{\delta}
\def\norma{c}

\def\egridd{\cc{E}_{\delta}}
\def\penb{\varkappa}

\def\dotzeta{\dot{\zeta}}
\def\mes{\pi}
\def\mesl{\Lambda}
\def\cprr{F}

\def\lambdam{\gm_{1}}
\def\lambdaB{{\lambda}^{*}}
\def\lambdac{{\lambda'}}

\def\cla{{b}}
\def\fis{\mathfrak{a}}
\def\fiss{\fis_{1}}

\def\Vd{{V}}
\def\vd{\bar{v}}

\def\klim{k^{\circ}}
\def\midm{\mid \!}

\def\Ldrift{M}
\def\ldrift{m}
\def\mY{b}
\def\Lvar{D}
\def\lvar{\sigma}

\def\Mubcu{\Upsilon}
\def\Dthetav{\bb{u}}

\def\B{\cc{B}}
\def\BD{\B^{\circ}}
\def\BU{B}
\def\BI{\B^{*}}

\def\mub{\mu^{*}}
\def\mubc{\mu}
\def\mubcb{\mubc^{*}}
\def\Mubc{\mathbb{M}}
\def\Mubcb{\mathrm{M}}

\def\zzc{\zz_{c}}
\def\ww{w}
\def\wwc{\ww_{c}}

\def\norms{\circ} 
\def\rs{\rr_{\norms}}
\def\yys{\yy_{\norms}}
\def\xxs{\xx_{\norms}}
\def\zzs{\zz_{\norms}}
\def\uu{\mathtt{u}}
\def\uus{\uu_{\norms}}
\def\mus{\mu_{\norms}}
\def\gms{\gm_{\norms}}
\def\wws{\ww_{\circ}}

\def\srho{s}
\def\mrho{\varrho}

\def\Lmgf{\mathfrak{M}}
\def\Lmgfb{\Lmgf^{*}}

\def\lmgf{\mathfrak{m}}
\def\lmgfb{\lmgf^{*}}

\def\Expzeta{\mathfrak{N}}
\def\expzeta{\mathfrak{s}}

\def\rr{\mathtt{r}}
\def\rrb{\rr^{*}}
\def\rru{\rr_{\circ}}
\def\rrc{\rr'}
\def\rs{r_{*}}

\def\zz{\mathfrak{z}}
\def\zzb{\tilde{\zz}}
\def\tt{\mathfrak{t}}
\def\zb{z_{\rd}}
\def\zzg{\zz_{1}}
\def\zzQ{\zz_{0}}
\def\zzq{\zz}

\def\Cr{\mathfrak{c}}
\def\Crp{\mathfrak{C}}
\def\Crl{\mathfrak{r}}
\def\Crlp{\mathfrak{R}}
\def\Crlq{\cc{T}}
\def\Crlmu{\cc{M}}

\def\zetah{\zeta_{h}}
\def\GG{G}
\def\HH{H}
\def\pG{p}
\def\pH{q}
\def\hh{H^{*}}

\def\mubch{\mubc_{1}}
\def\rhoh{\rho_{1}}
\def\CoFuh{\CoFu_{1}}
\def\dimh{p_{1}}
\def\VPh{\VP_{1}}
\def\VPt{\VP_{0}}

\def\LLh{L_{1}}
\def\pnndh{\pnnd_{1}}

\def\LCS{C}
\def\Ac{A_{0}}
\def\Ab{A_{\rd}}
\def\DPrb{\DPr_{\rdb}}
\def\DPrm{\DPr_{\rdm}}
\def\Cb{\cc{C}_{\rdb}}
\def\Ub{\cc{U}_{\rdb}}
\def\zetavrb{\zetavr_{\rd}}
\def\xivrb{\breve{\xiv}_{\rd}}
\def\VPrb{\breve{\VP}_{\rdb}}
\def\Larb{\breve{\La}_{\rdb}}
\def\Larm{\breve{\La}_{\rdm}}

\def\deltav{\bb{\delta}}

\def\score{\nabla}
\def\scorer{\breve{\nabla}}

\def\LCS{C}
\def\Ac{A_{0}}
\def\Bc{B_{0}}
\def\AF{A}
\def\Ab{A_{\rdb}}
\def\Am{A_{\rdm}}
\def\DPrc{\DPr_{0}}
\def\DPrb{\DPr_{\rdb}}
\def\DPrm{\DPr_{\rdm}}
\def\Cb{\cc{C}_{\rdb}}
\def\Cm{\cc{C}_{\rdm}}
\def\Ub{\cc{U}_{\rdb}}
\def\deltav{\bb{\delta}}
\def\nuv{\bb{\nu}}
\def\xivrb{\breve{\xiv}_{\rd}}
\def\VPrb{\breve{\VP}_{\rdb}}
\def\Larb{\breve{\La}_{\rdb}}
\def\Lar{\breve{\La}}
\def\Larm{\breve{\La}_{\rdm}}
\def\VH{Q}
\def\VHc{\VH_{0}}
\def\zetavrm{\zetavr_{\rdm}}
\def\N{\mathbb{N}}

\def\Span{\operatorname{span}}
\def\Exc{{\square}}
\def\UUs{U_{\circ}}
\def\errbm{\errb^{*}}
\def\corrDF{\nu}
\def\BBr{\breve{\BB}}
\def\taua{\tau}
\def\AssId{\mathcal{I}}
\def\assId{\iota}
\def\AFD{\cc{A}}

\def\BanX{\cc{X}}
\def\basX{\ev}
\def\apprX{\alpha}
\def\fvs{\fv^{*}}
\def\lkh{\ell}
\def\Bc{B_{0}}
\def\dimn{\dimp_{\nsize}}
\def\betan{\beta_{\nsize}}


\def\xivGP{\xiv_{\GP}}
\def\dimA{\mathtt{p}}
\def\dimAGP{\dimA}
\def\dime{\dimA_{e}}
\def\dimG{\dimA_{\GP}}
\def\dimS{\dimA_{s}}
\def\nubm{\nu_{\rd}}
\def\uub{u_{\rd}}
\def\uubGP{u_{\rd,\GP}}

\def\priorden{\pi}
\def\xivGP{\xiv_{\GP}}
\def\dimAGP{\dimA}
\def\nubm{\nu_{\rd}}
\def\uub{u_{\rd}}
\def\uubGP{u_{\rd,\GP}}

\def\CR{\mathcal{C}}
\def\CRb{\CR_{\rdb}}
\def\vthetavb{\bar{\vthetav}}
\def\Covpost{\mathfrak{S}}

\def\Db{\DP_{+}}
\def\Dm{\DP_{-}}
\def\uvb{\uv_{+}}
\def\uvm{\uv_{-}}
\def\uud{\omega}
\def\taub{\delta}
\def\Lip{L}
\def\Xb{X_{+}}
\def\Xm{X_{-}}
\def\deltam{\delta_{-}}
\def\betauv{\delta}
\def\betab{\betauv_{1}}
\def\betaf{\betauv_{2}}
\def\upsv{\bb{\varkappa}}
\def\upsvb{\bar{\upsv}}
\def\rhob{\varrho}
\def\alpb{\alp_{1}}
\def\betap{\betauv_{3}}
\def\Ec{\E^{\circ}}
\def\ff{f}
\def\fpos{g}
\def\fneg{h}
\def\alpb{\alp_{+}}
\def\alpm{\alp_{-}}

\def\kappak{\kappa}
\def\kappas{\kappak^{*}}
\def\Kappak{\cc{K}}
\def\DPk{\DP_{\kappak}}
\def\VPk{\VP_{\kappak}}

\def\ts{s}
\def\tsv{\bb{\ts}}
\def\mm{\kappa}
\def\mmc{\mm'}
\def\mmd{\mm^{\circ}}
\def\mmo{\mm^{*}}
\def\mmmmo{\mm,\mmo}
\def\mmt{\tilde{\mm}}
\def\mma{\hat{\mm}}
\def\pp{z}

\def\LLL{L_{1}}
\def\LLr{L_{0}}
\def\muL{\mu_{1}}
\def\mur{\mu_{0}}

\def\LmgfL{\Lmgf_{1}}
\def\Lmgfr{\Lmgf_{0}}
\def\Lmgfm{\Lmgf_{1}}

\def\Kappa{\cc{K}}
\def\CoFu{\cc{C}}
\def\CoFuc{\CoFu_{0}}
\def\CoFub{\CoFu^{*}}
\def\CoFuL{\CoFu_{1}}
\def\CoFur{\CoFu_{0}}
\def\CAL{\CA_{1}}
\def\CAr{\CA_{0}}
\def\CAzz{\cc{A}}

\def\pnnL{\pnn_{1}}
\def\pnnr{\pnn_{0}}
\def\ttd{\delta}
\def\alphaL{\alpha_{1}}
\def\alphar{\alpha_{0}}
\def\alpharL{\alpha}
\def\rat{\mathfrak{t}}
\def\mquad{\nquad}
\def\zzL{\zz_{1}}
\def\zzr{\zz_{0}}

\def\mmset{\mathcal{I}}
\def\xex{u}
\def\dcm{q}
\def\dc{g}
\def\dcL{\dc_{1}}
\def\dcr{\dc_{0}}
\def\kk{k}

\def\cpen{\tau}

\def\dens{f}
\def\jj{j}
\def\JJ{\cc{J}}
\def\Zphi{Z}
\def\Zphiv{\bb{\Zphi}}

\def\nuu{\mathfrak{u}}
\def\nud{\mathfrak{u}_{0}}
\def\nun{c_{\nuu}}
\def\rhork{\kullb}
\def\GH{\mbox{GH}}
\def\HYP{\mbox{HYP}}
\def\NIG{\mbox{NIG}}
\def\IR{{\rm I\!R}}
\def\taggr{b}
\def\penm{\boldsymbol{m}}
\def\Crlp{\cc{R}}

\def\Mh{M}
\def\Mht{\Mh^{c}}

\def\Mhh{\Mh^{-}}
\def\Mhc{G}
\def\Lh{L_{1}}
\def\Uh{\cc{U}}
\def\wloc{w}
\def\Bias{B}
\def\bias{b}
\def\ExpzetaU{\Expzeta_{1}}
\def\vpci{\vp_{i,0}}
\def\IFci{\IF_{i,0}}

\def\erqb{\Circle_{\rdb}}
\def\erqm{\Circle_{\rdm}}
\def\errqm{\errm^{*}}
\def\errqb{\errb^{*}}
\def\Nsize{N}
\def\VVD{\VV_{1}}
\def\AA{A}
\def\Wloc{W}

\renewcommand{\(}{$\,}
\renewcommand{\)}{\,$}

\def\nquad{\hspace{-1cm}}
\def\eqdef{\stackrel{\operatorname{def}}{=}}
\def\tod{\stackrel{d}{\longrightarrow}}
\def\tow{\stackrel{w}{\longrightarrow}}
\def\toP{\stackrel{\P}{\longrightarrow}}

\newcommand{\cc}[1]{\mathscr{#1}}
\newcommand{\bb}[1]{\boldsymbol{#1}}

\renewcommand{\bar}[1]{\overline{#1}}
\renewcommand{\hat}[1]{\widehat{#1}}
\renewcommand{\tilde}[1]{\widetilde{#1}}

\renewcommand{\Gamma}{\varGamma}
\renewcommand{\Pi}{\varPi}
\renewcommand{\Sigma}{\varSigma}
\renewcommand{\Delta}{\varDelta}
\renewcommand{\Lambda}{\varLambda}
\renewcommand{\Psi}{\varPsi}
\renewcommand{\Phi}{\varPhi}
\renewcommand{\Theta}{\varTheta}
\renewcommand{\Omega}{\varOmega}
\renewcommand{\Xi}{\varXi}
\renewcommand{\Upsilon}{\varUpsilon}
\def\nn{\nonumber \\}

\def\suml{\sum\limits}
\def\supl{\sup\limits}
\def\maxl{\max\limits}
\def\infl{\inf\limits}
\def\intl{\int\limits}
\def\liml{\lim\limits}
\def\Cov{\operatorname{Cov}}
\def\Var{\operatorname{Var}}
\def\arginf{\operatornamewithlimits{arginf}}
\def\argsup{\operatornamewithlimits{argsup}}
\def\argmax{\operatornamewithlimits{argmax}}
\def\argmin{\operatornamewithlimits{argmin}}
\def\val{\operatorname{val}}

\def\D{\boldsymbol{D}}
\def\dd{\operatorname{d}}
\def\tr{\operatorname{tr}}
\def\I{I\!\!I}
\def\R{I\!\!R}
\def\E{I\!\!E}
\def\P{I\!\!P}
\def\X{\mathfrak{X}}
\def\kappa{\varkappa}
\def\Const{\mathrm{Const.} \,}
\def\cdt{\boldsymbol{\cdot}}
\def\tm{\!\times\!}
\def\T{\top}
\def\diag{\operatorname{diag}}
\def\diam{\operatorname{diam}}
\def\rank{\operatorname{rank}}
\def\loc{\operatorname{loc}}

\def\av{\bb{a}}
\def\bv{\bb{b}}
\def\cv{\bb{c}}
\def\dv{\bb{d}}
\def\ev{\bb{e}}
\def\fv{\bb{f}}
\def\gv{\bb{g}}
\def\hv{\bb{h}}
\def\iv{\bb{i}}
\def\jv{\bb{j}}
\def\kv{\bb{k}}
\def\lv{\bb{l}}
\def\mv{\bb{m}}
\def\nv{\bb{n}}
\def\ov{\bb{o}}
\def\pv{\bb{p}}
\def\qv{\bb{q}}
\def\rv{\bb{r}}
\def\sv{\bb{s}}
\def\tv{\bb{t}}
\def\uv{\bb{u}}
\def\vv{\bb{v}}
\def\wv{\bb{w}}
\def\xv{\bb{x}}
\def\yv{\bb{y}}
\def\zv{\bb{z}}

\def\Cv{\bb{C}}
\def\Gv{\bb{G}}
\def\Mv{\bb{M}}
\def\Sv{\bb{S}}
\def\Uv{\bb{U}}
\def\Xv{\bb{X}}
\def\Yv{\bb{Y}}
\def\Zv{\bb{Z}}

\def\alphav{\bb{\alpha}}
\def\epsv{\bb{\varepsilon}}
\def\etav{\bb{\eta}}
\def\gammav{\bb{\gamma}}
\def\varepsilonv{\bb{\varepsilon}}
\def\phiv{\bb{\phi}}
\def\psiv{\bb{\psi}}
\def\tauv{\bb{\tau}}
\def\upsilonv{\bb{\upsilon}}
\def\xiv{\bb{\xi}}
\def\zetav{\bb{\zeta}}

\def\Psiv{\bb{\Psi}}
\def\CONST{\mathtt{C}}

\def\itemv{\vfill\item}
\newenvironment{myslide}[1]
    {\begin{frame}\frametitle{#1}\vfill}
    {\vfill\end{frame}}

\def\vsp{\vspace{0.05\textheight} \vfill}
\def\summarysign{\resizebox{0.08\textwidth}{0.08\textheight}{\includegraphics{summary}}\,}
\def\nix{}
\def\wpu{$\bullet$}

\def\btri{\vfill{\( \blacktriangleright \) }}
\def\btrir{\vfill{\( \blacktriangleright \) }}

\newcommand{\mygraphics}[3]{\begin{center}
    \resizebox{#1\textwidth}{#2\textheight}{\includegraphics{#3}}
    \end{center}
}

\newcommand{\mybox}[3]{\begin{center}
    \resizebox{#1\textwidth}{#2\textheight}{#3}
    \end{center}
}

\newenvironment{eqnh}
{
    \setbeamercolor{postit}{fg=black,bg=hellgelb} 
    \begin{beamercolorbox}[center,wd=\textwidth]{postit} 
    \begin{eqnarray*}}
    {\end{eqnarray*}\end{beamercolorbox}
}

\def\Excgr{\diamondsuit}
\def\AF{A_{0}}
\def\Bc{B_{0}}
\def\AF{A}
\def\Ab{A_{\rdb}}
\def\Am{A_{\rdm}}
\def\DPrb{\DPr_{\rdb}}
\def\DPrm{\DPr_{\rdm}}
\def\Cb{\cc{C}_{\rdb}}
\def\Cm{\cc{C}_{\rdm}}
\def\Ub{\cc{U}_{\rdb}}
\def\deltav{\bb{\delta}}
\def\nuv{\bb{\nu}}
\def\xivrb{\breve{\xiv}_{\rd}}
\def\VPrb{\breve{\VP}_{\rdb}}
\def\Larb{\breve{\La}_{\rdb}}
\def\Lar{\breve{\La}}
\def\Larm{\breve{\La}_{\rdm}}
\def\score{\nabla}
\def\scorer{\breve{\nabla}}
\def\VH{Q}
\def\VHc{\VH_{0}}
\def\zetavrm{\zetavr_{\rdm}}
\def\N{\mathbb{N}}

\def\Span{\operatorname{span}}
\def\Exc{{\square}}
\def\UUs{U_{\circ}}
\def\errbm{\errb^{*}}
\def\corrDF{\nu}
\def\BBr{\breve{\BB}}
\def\taua{\tau}
\def\AssId{\mathcal{I}}
\def\AFD{\cc{A}}

\def\BanX{\cc{X}}
\def\basX{\ev}
\def\apprX{\alpha}
\def\fvs{\fv^{*}}
\def\lkh{\ell}
\def\Bc{B_{0}}
\def\lin{\operatorname{lin}}
\def\upsilonvd{\upsilonv^{\circ}}
\def\h{\frac{1}{2}}
\def\Xv{\Xbb}
\def\spread{\deficiency}
\def\HHrb{\breve \HHb}

\def\Ybb{\mathbb Y}
\def\Xbb{\mathbb X}

\def\KL{\text{K}_0}
\def\RR{\text{R}_0}
\def\Upsthetav#1{\substack{\\[0.1pt] \upsilonv\in\Ups \\[1pt] \Pi_{\thetav} \upsilonv = #1}}
\def\dimB{\mathtt{p}_{\BB}}
\def\nuno{\nunu}
\def\Xvv{\bb{X}}
\def\upss{\ups^*}
\def\VPr{\breve{\VP}}

\def\gps{s}
\def\GK{\cc{G}}
\def\Excgr{\diamondsuit}

\def\dimh{m}
\def\LCS{C}
\def\Ac{A}
\def\Bc{E}
\def\AF{A}
\def\CF{C}
\def\Ab{A_{\rdb}}
\def\Am{A_{\rdm}}
\def\DPc{\DP}
\def\VPc{\VP}
\def\HHc{\HH}
\def\DPrc{\DPr}
\def\DPrp{\DPr_{\dimh}}
\def\DPrb{\DPr_{\rdb}}
\def\DPrm{\DPr_{\rdm}}
\def\Cb{\cc{C}_{\rdb}}
\def\Cm{\cc{C}_{\rdm}}
\def\Ub{\cc{U}_{\rdb}}
\def\xivrb{\breve{\xiv}_{\rd}}
\def\VPrb{\breve{\VP}_{\rdb}}
\def\Larb{\breve{\La}_{\rdb}}
\def\Lar{\breve{\La}}
\def\Larm{\breve{\La}_{\rdm}}

\def\DFc{\DF}
\def\VFc{\VF}

\def\VH{Q}
\def\VHc{\VH}
\def\zetavrm{\zetavr_{\rdm}}

\def\fvh{\bb{\dimh}}
\def\N{\mathbb{N}}
\def\Z{\mathbb{Z}}

\def\iic{\IF}
\def\iif{\breve{\iic}}
\def\DP{{D}}
\def\HH{{H}}
\def\A{{A}}
\def\ifc{\breve{\iic}}

\def\deltar{\delta}

\def\Thetathetav#1{\substack{\\[0.1pt] \upsilonv \in \Ups \\[1pt] \Proj \upsilonv = #1}}
\def\Span{\operatorname{span}}
\def\Exc{{\square}}
\def\UUs{U_{\circ}}
\def\errbm{\errb^{*}}
\def\corrDF{\rho}
\def\BBr{\breve{\BB}}
\def\taua{\tau}
\def\AssId{\mathcal{I}}
\def\AFD{\cc{A}}

\def\BanX{\cc{X}}
\def\basX{\ev}
\def\apprX{\alpha}
\def\fvs{\fv^{*}}
\def\lkh{\ell}
\def\h{\frac{1}{2}}
\def\basis{\ev}
\def\Proj{\Pi_{0}}

\def\Ij{\mathcal{I}}

\def\Mn{M_{\nsize}}
\def\bA{\breve{A}}
\def\cA{\bA_{\dimh}}

\def\Sdr{\cc{S}}
\def\xxn{\xx_{\nsize}}

\def\CONST{\mathtt{C}}
\def\Ij{\mathcal{I}}

\def\etas{\eta^{*}}
\def\zetavs{\zetav^{*}}
\def\zetavc{\zetav'}

\def\omegav{\bb{\phi}}
\def\omegavs{\omegav^{*}}
\def\omegavc{\omegav'}

\def\dimn{\dimp_{\nsize}}
\def\betan{\beta_{\nsize}}

\def\bA{\breve{A}}
\def\cA{\bA_{\dimh}}

\def\corrDF{\rho}
\def\rupf{\rr_{1}}

\def\gmone{\gm_{1}}
\def\rhorb{\rhor_{1}}

\def\upsilonv{\boldsymbol{\upsilon}}
\def\upsilonvs{\boldsymbol{\upsilon}^{*}}
\def\upsilonvd{\boldsymbol{\upsilon}^\circ}
\def\upsilonvc{\upsilonv'}

\def\dimB{\mathtt{p}_{\BB}}

\def\zzq{\zz_{1}}
\def\zzQ{\zz_{Q}}
\def\sign{\operatorname{sign}}
\def\vec{\operatorname{vec}}

\section{Introduction}
This paper presents a convergence result for an alternating maximization procedure to approximate M-estimators. Let \( \Ybb\in \mathcal Y \) denote some observed random data, and \( \P \) denote the data 
distribution. In the semiparametric profile M-estimation framework the target of analysis is
\begin{EQA}[c]
\label{eq: definition of full target}
    \thetavs
    =
    \Pi_{\thetav} \upsilonvs=\Pi_{\thetav}\argmax_{\upsilonv}\E_{\P}\LL(\upsilonv,\Ybb),
\end{EQA}
where \(\LL: \Ups\times \mathcal Y\to \R\), \(\Pi_{\thetav} :\Ups \to \R^{\dimp}\) is a projection and where \( \Ups \) is some high dimensional or even infinite dimensional parameter space. This paper focuses on finite dimensional parameter spaces \(\Ups\subseteq\R^{\dimtotal}\) with \(\dimtotal=\dimp+\dimh\in\N\) being the full dimension, as infinite dimensional maximization problem are computationally anyways not feasible. A prominent way of estimating \(\thetavs\) is the profile M-estimator (pME)
\begin{EQA}[c]
    \tilde{\thetav}
    \eqdef
    \Pi_{\thetav}\tilde\ups\eqdef \argmax_{(\thetav,\etav)} \LL(\thetav,\etav).
\label{ttSI}
\end{EQA} 
The alternating maximization procedure is used in situations where a direct computation of the full maximum estimator (ME) \(\tilde \ups\in\R^{\dimtotal}\) is not feasible or simply very difficult to implement. Consider for example the task to calculate the pME where with scalar random observations \(\Ybb=(y_i)_{i=1}^n\subset \R\), parameter \(\ups=(\thetav,\etav)\in\R^{\dimp}\times\R^{\dimh}\) and a function basis \((\basX_{k})\subset L^2(\R)\)
\begin{EQA}[c]
    \LL(\thetav,\etav)
    =
    - \frac{1}{2} 
    \sum_{i=1}^{\nsize} \Bigl| y_{i} - \sum_{k=0}^{m} \etav_{k} \basX_{k}(\Xvv_{i}^{\T} \thetav) \Bigr|^{2}.
	\end{EQA}
In this case the maximization problem is high dimensional and non-convex (see Section \ref{sec: application to single index} for more details). But for fixed \(\thetav\in S_1\subset \R^{\dimp}\) maximization with respect to \(\etav\in\R^{\dimh}\) is rather simple while for fixed \(\etav\in\R^{\dimh}\) the maximization with respect to \(\thetav\in\R^{\dimp}\) can be feasible for low \(\dimp\in \N\). This motivates the following iterative procedure. Given some (data dependent) functional \(\LL:\R^{\dimp}\times\R^{\dimh}\to \R\) and an initial guess \(\tilde\ups_0\in\R^{\dimp+\dimh}\) set for \(k\in\N\)
\begin{EQA}
\tilde\upsilonv_{k,k+1}&\eqdef&(\tilde \thetav_{k},\tilde \etav_{k+1})=\left(\tilde \thetav_{k},\argmax_{\etav\in\R^{\dimh}}\LL(\tilde \thetav_{k},\etav)\right),\\
\tilde\upsilonv_{k,k}&\eqdef&(\tilde \thetav_{k},\tilde \etav_k)=\left(\argmax_{\thetav\in\R^{\dimp}}\LL(\thetav, \tilde \etav_{k}),\tilde \etav_k\right).
\label{eq: alternating sequence}
\end{EQA}
The so called "alternation maximization procedure" (or minimization) is a widely applied algorithm in many parameter estimation tasks (see \cite{Jain2013}, \cite{Netrapalli2013}, \cite{Keshavan2010} or \cite{Yi2013}). Some natural questions arise: Does the sequence \((\tilde\thetav_k)\) converge to a limit that satisfies the same statistical properties as the profile estimator? And if the answer is yes, after how many steps does the sequence acquire these properties? Under what circumstances does the sequence actually converge to the global maximizer \(\tilde\ups\)? This problem is hard because the behavior of each step of the sequence is determined by the actual finite sample realization of the functional \(\LL(\cdot,\Ybb)\). To the authors' knowledge no general "convergence" result is available that answers the questions from above except for the treatment of specific models (see again \cite{Jain2013}, \cite{Netrapalli2013}, \cite{Keshavan2010} or \cite{Yi2013}).

We address this difficulty via employing new finite sample techniques of \cite{AASP2013} and \cite{SP2011} which allow to answer the above questions: with growing iteration number \(k\in\N\) the estimators \(\tilde\thetav_k\) attain the same statistical properties as the profile M-estimator and Theorem \ref{theo: main Theorem} provides a choice of the necessary number of steps \(K\in\N\). Under slightly stronger conditions on the structure of the model we can give a convergence result to the global maximizier that does not rely on unimodality. Further we can address the important question under which ratio of full dimension \(\dimtotal=\dimp+\dimh\in\N\) to sample size \(n\in\N\) the sequence behaves as desired. For instance for smooth \(\LL\) our results become sharp if \(\dimtotal/\sqrt{n}\) is small and convergence to the full maximizer already occurs if \(\dimtotal/n\) is small.

The alternation maximization procedure can be understood as a special case of the Expectation Maximization algorithm (EM algorithm) as we will illustrate below. The EM algorithm itself was derived by \cite{Dempster1977} who generalized particular versions of this approach and presented a variety of problems where its application can be fruitful; for a brief history of the EM algorithm see \cite{McLachlan1997} (Sect. 1.8). We briefly explain the EM algorithm. Take observations \((\Xbb)\sim \P_{\thetav}\) for some parametric family \(( \P_{\thetav},\,\thetav\in\Theta)\). Assume that a parameter \(\thetav\in\Theta\) is to be estimated as maximizer of the functional \(\LL_c(\Xbb,\thetav)\in\R\), but that only \(\Ybb\in \mathcal Y\) is observed, where \(\Ybb=f_Y(\Xbb)\) is the image of the complete data set \(\Xbb\in\BanX\) under some map \(f_{Y}:\,\BanX\to\, \mathcal Y\). Prominent examples for the map \(f_Y\) are projections onto some components of \(\Xbb\) if both are vectors. The information lost under the map can be regarded as missing data or latent variables. As a direct maximization of the functional is impossible without knowledge of \(\Xbb\) the EM algorithm serves as a workaround. It consists of the iteration of tow steps: starting with some initial guess \(\tilde \thetav_0\) the kth ``Expectation step`` derives the functional \(Q\) via
\begin{EQA}[c]
Q(\thetav,\thetav_k)= \E_{\thetav_k}[\LL_c(\Xbb,\thetav)|\Ybb],
\end{EQA}
which means that on the right hand side the conditional expectation is calculated under the distribution \(\P_{\thetav_k}\). The kth ''Maximation step'' then simply locates the maximizer \(\thetav_{k+1}\) of \(Q\). 

Since the algorithm is very popular in applications a lot of research on its behaviour has been done. We are only dealing with a special case of this procedure so we restrict our selves to citing the well known convergence result by \cite{Wu1983}. Wu presents regularity conditions that ensure that \(\LL(\thetav_{k+1})\ge\LL(\thetav_k)\) where
\begin{EQA}[c]
\LL(\thetav,\Ybb)\eqdef\log\int_{\{\Xbb|\,\Ybb=f_Y(\Xbb)\}}\exp\LL_c(\Xbb,\thetav)d\Xbb,
\end{EQA}
such that \(\LL(\thetav_k)\to\LL^*\) for some limit value \(\LL^*>0\), that may depend on the starting point \(\thetav_0\). Additionally Wu gives conditions that guarantee that the sequence \(\thetav_k\) (possibly a sequence of sets) converges to \(C(\LL^*)\eqdef\{\thetav|\,\LL(\thetav)=\LL^*\}\). \cite{Dempster1977} show that the speed of convergence is linear in the case of point valued \(\thetav_k\) and of some differentiability criterion being met. A limitation of these results is that it is not clear whether \(\LL^*=\sup \LL(\thetav)\) and thus it is not guaranteed that \(C(\LL^*)\) is the desired MLE and not just some local maximum. Of course this problem disappears if \(\LL(\cdot)\) is unimodal and the regularity conditions are met but this assumption may be too restrictive. 

In a recent work \cite{wainwright2014} present a new way of addressing the properties of the EM sequence in a very general i.i.d. setting, based on concavity of \(\thetav\mapsto \E_{\thetavs}[\LL_c(\Xbb,\thetav)]\). They show that if additional to concavity the functional \(\LL_c\) is smooth enough (First order stability) and if for a sample \((\Yv_i)\) with high probability an uniform bound holds of the kind
\begin{EQA}[c]
\label{eq: uniform wainwright bound}
\sup_{\thetav\in B_{\rr}(\thetavs)}\left|\sum_{i=1}^n\argmax_{\thetavd}\E_{\thetav}[\LL_c(\Xbb,\thetavd)|\Yv_i]-\argmax_{\thetavd}\E_{\thetavs}[\E_{\thetav}[\LL_c(\Xbb,\thetavd)|\Ybb]]\right|\le \eps_n,
\end{EQA}
that then with high probability and some \(\corrDF<1\)
\begin{EQA}[c]
\label{eq: results of wainwright}
\|\tilde\thetav_k-\thetavs\|\le \corrDF^k\|\thetav_0-\thetavs\|+\CONST \eps_n.
\end{EQA}
Unfortunately this does not answer our two questions to full satisfaction. First the bound \eqref{eq: uniform wainwright bound} is rather high level and has to be checked for each model, while we seek (and find) properties of the functional - such as smoothness and bounds on the moments of its gradient - that lead to comparably desirable behavior. Further with \eqref{eq: results of wainwright} it remains unclear whether for large \(k\in\N\) the alternating sequence satisfies a Fisher expansion or whether a Wilks type phenomenon occurs. In particular it remains open which ratio of dimension to sample size ensures good performance of the procedure. Also the actual convergence of \(\tilde\thetav_k\to\thetavs\) is not implied, as the right hand side in \eqref{eq: results of wainwright} is bounded from below by \(\CONST \eps_n>0\).

\begin{remark}
In the context of the alternating procedure the bound \eqref{eq: uniform wainwright bound} would read
\begin{EQA}[c]
 \max_{\thetavd\in B_{\rr}(\thetavs)}  \left| \argmax_{\thetav} \LL(\thetav,\tilde\etav_{\thetavd})-\argmax_{\thetav} \E\LL(\thetav,\tilde\etav_{\thetavd})\right|\le \eps_n,
\end{EQA}
which is still difficult to check.
\end{remark}

To see that the procedure \eqref{eq: alternating sequence} is a special case of the EM algorithm denote in the notation from above \(\Xbb=\big(\argmax_{\etav}\LL\{(\thetav,\etav),\Ybb\},\Ybb\big)\) - where \(\thetav\) is the parameter specifying the distribution \(\P_{\thetav}\) - and \(f_Y(\Xbb)=\Ybb\). Then with \(\LL_c(\thetav,\Xbb)=\LL_c(\thetav,\etav,\Ybb)\eqdef \LL(\thetav,\etav)\)
\begin{EQA}[c]
Q(\thetav,\tilde\thetav_{k-1})= \E_{\tilde\thetav_{k-1}}[\LL_c(\thetav,\Xbb)|\Ybb]=\LL_c\Big(\thetav,\argmax_{\etav}\LL\{(\tilde\thetav_{k-1},\etav),\Ybb\},\Ybb\Big)=\LL(\thetav,\tilde \etav_{k}),
\end{EQA}
and thus the resulting sequence is the same as in \eqref{eq: alternating sequence}. Consequently the convergence results from above apply to our problem if the involved regularity criteria are met. But as noted these results do not tell us if the limit of the sequence \((\tilde\thetav_k)\) actually is the profile and the statistical properties of limit points are not clear without too restrictive assumptions on \(\LL\) and the data.

This work fills this gap for a wide range of settings. Our main result can be summarized as follows: Under a set of regularity conditions on the data and the functional \(\LL\) points of the sequence \((\tilde \thetav_k)\) behave for large iteration number \(k\in \N\) like the pME. To be more precise we show in Theorem \ref{theo: main Theorem} that when the initial guess \(\tilde\upsilonv_0\in\Upsilon\) is good enough, then the step estimator sequence \((\tilde\thetav_k)\) satisfies with high probability 
 \begin{EQA}
      \bigl\| 
        \DPr \bigl( \tilde{\thetav}_k - \thetavs \bigr) 
        - \xivr 
    \bigr\|^{2}
    &\le& 
    \eps(\dimtotal+ \corrDF^{k}\RR),\\
  \left|\max_{\etav}\LL(\tilde{\thetav}_{k},\etav) - \max_{\etav}\LL(\thetavs,\etav)
    -
    \| \xivr \|^{2}/2\right|
    &\le&(\dimp+\xx)^{1/2}\eps(\dimtotal+ \corrDF^{k}\RR),
\label{eq: main result}
 \end{EQA}
where \(\corrDF<1\) and \(\eps>0\) is some small number, for example \(\eps=\CONST \dimtotal/\sqrt{n}\) in the smooth i.i.d setting. Further \(\RR>0\) is a bound related to the quality of the initial guess. The random variable \(\xivr\in\R^{\dimp}\) and the matrix \(\DPr\in\R^{\dimp\times\dimp}\) are related to the efficient influence function in semiparametric models and its covariance. These are up to \(\corrDF^{k}\RR\) the same properties as those proven for the pME in \cite{AASP2013} under nearly the same set of conditions. Further in our second main result we manage to show under slightly stronger smoothness conditions that \((\tilde\thetav_k,\tilde\etav_k)\) approaches the ME \(\tilde\ups\) with nearly linear convergence speed, i.e. \(\|\DF((\thetav_{k},\etav_k)-\tilde\ups)\|\le \tau^{k/\log(k)}\) with some \(0<\tau<1\) and \(\DF^2=\E\nabla^2\LL(\upss)\) (see Theorem \ref{theo: convergence to MLE}).

In the following we write \(\tilde \upsilonv_{k,k(+1)}\) in statements that are true for both \(\tilde \upsilonv_{k,k+1}\) and \(\tilde\upsilonv_{k,k}\). Also we do not specify whether the elements of the resulting sequence are sets or single points. All statements made about properties of \(\tilde \upsilonv_{k,k(+1)}\) are to be understood in the sense that they hold for ``every point of \(\tilde \upsilonv_{k,k(+1)}\)``. 

\subsection{Idea of the proof}
\label{sec: idea of the proof}
To motivate the approach first consider the toy model
\begin{EQA}
  \mathbb{Y}
  =
    \upsilonvs + \epsv,\text{ where } \epsv\sim \mathcal N(0,\IF_{\upsilonvs}^{-2}),\,& &\IF^2_{\upsilonvs}=:\left( 
      \begin{array}{cc}
        \IF^2_{\thetavs} & A \\
        A^{\T} & \IF^2_{\etavs} \\
      \end{array}\right).
\end{EQA}
In this case we set \(\LL\) to be the true log likelihood of the observations
\begin{EQA}[c]
\LL(\ups,\mathbb{Y})=-\|\IF(\upsilonvs-\mathbb{Y})\|^2/2.
\end{EQA}
With any starting initial guess \(\tilde\upsilonv_0\in \R^{\dimp+\dimh}\) we obtain from \eqref{eq: alternating sequence} for \(k\in\N \) and the usual first order criterion of maximality the following two equations
\begin{EQA}
 \IF_{\thetavs}(\tilde{\thetav}_k - \thetavs)=&I_{\thetavs} \epsv_{\thetav}+\IF_{\thetavs}^{-1}A(\tilde \etav_{k}-\etavs),\\
 \IF_{\etavs}(\tilde{\etav}_{k+1} - \etavs)=&I_{\etavs} \epsv_{\etav}+\IF_{\etavs}^{-1}A^{\T}(\tilde \thetav_{k}-\thetavs).
\end{EQA}
Combining these two equations we derive, assuming  \(\|\IF_{\thetavs}^{-1}A\IF_{\etavs}^{-2}A^{\T}I_{\thetavs}^{-1}\|=:\|\boldsymbol{M}_0\|=\nu<1\)
\begin{EQA}
 \IF_{\thetavs}(\tilde{\thetav}_k - \thetavs)&=&\IF_{\thetavs}^{-1}( \IF^2_{\thetavs}\epsv_{\thetav}-A\epsv_{\etav})+\IF_{\thetavs}^{-1}A\IF_{\etavs}^{-1}A^{\T}\IF_{\thetavs}^{-1}\IF_{\thetavs}(\tilde{\thetav}_{k-1} - \thetavs)\\
 &=&\sum_{l=1}^{k}\boldsymbol{M}_0^{k-l}\IF_{\thetavs}^{-1}(\IF^2_{\thetavs} \epsv_{\thetav}-A\epsv_{\etav})\\
  &&+\boldsymbol{M}_0^{k}\IF_{\thetavs}(\tilde{\thetav}_0 - \thetavs)\rightarrow \IF_{\thetavs}(\hat \thetav-\thetavs).
\end{EQA}
Because the limit \(\hat \thetav\) is independent of the initial point \(\tilde\upsilonv_0\) and because the profile \(\tilde\thetav\) is a fix point of the procedure the unique limit satisfies \(\hat \thetav=\tilde\thetav\). This argument is based on the fact that in this setting the functional is quadratic such that the gradient satisfies
\begin{EQA}[c]
\nabla\LL(\upsilonv)=\IF^2_{\upsilonvs}(\upsilonv-\upsilonvs)+\IF^2_{\upsilonvs}\epsv.
\end{EQA}
Any smooth function is quadratic around its maximizer which motivates a local linear approximation of the gradient of the functional \(\LL\) to derive our results with similar arguments. This is done in the proof of Theorem~\ref{theo: main Theorem}. 

First it is ensured that the whole sequence \((\tilde\ups_{k,k(+1)})_{k\in\N_0}\) satisfies for some \(\RR>0\)
\begin{EQA}[c]
\label{eq: concentration of sequence in intro}
\{\tilde\ups_{k,k(+1)},\,k\in\N_0\}\subset \{\|\DF(\ups-\upss)\|\le \RR\},
\end{EQA}
where \(\DF^2\eqdef \nabla^2\E\LL(\upss)\) (see Theorem \ref{theo: large def with K}). In the second step we approximate with \(\zeta=\LL-\E\LL\)
\begin{EQA}
\label{eq: central approx in intro}
\LL(\upsilonv,\upsilonvs)&=&\nabla \zeta(\upsilonvs)(\upsilonv-\upsilonvs)-\|\DF(\upsilonv-\upsilonvs)\|^2/2+\alpha(\upsilonv,\upsilonvs),
\end{EQA}
where \(\alpha(\upsilonv,\upsilonvs)\) is defined by \eqref{eq: central approx in intro}.
Similar to the toy case above this allows using the first order criterion of maximality and \eqref{eq: concentration of sequence in intro} to obtain a bound of the kind
\begin{EQA}
\|\DF(\upsilonv_{k,k}-\upsilonvs)\|&\le& \CONST\sum_{l=0}^{k}\corrDF^{l}\left(\|\DF^{-1}\nabla \zeta(\upsilonvs)\|+|\alpha(\upsilonv_{l,l},\upsilonvs)| \right)\\
	&\le& \CONST_1\left(\|\DF^{-1}\nabla \zeta(\upsilonvs)\|+\epsilon(\RR) \right)+\corrDF^{k}\RR\eqdef \rr_k.
\end{EQA}
This is done in Lemma \ref{lem: recursion for statistical properties} using results from \cite{AASP2013} to show that \(\epsilon(\RR)\) is small.
Finally the same arguments as in \cite{AASP2013} allow to obtain our main result using that with high probability for all \(k\in\N_0\) \(\tilde \ups_{k,k}\in \{\|\DF(\ups-\upss)\|\le \rr_k\}\). For the convergence result similar arguments are used. The only difference is that instead of \eqref{eq: central approx in intro} we use the approximation
\begin{EQA}
\LL(\upsilonv,\tilde\upsilonv)&=&-\|\DF(\upsilonv-\tilde\upsilonv)\|^2/2+\alpha'(\upsilonv,\tilde\upsilonv),
\end{EQA}
exploiting that \(\nabla\LL(\tilde\ups)\equiv 0\), which allows to obtain actual convergence to the ME.

It is worthy to point out two technical challenges of the analysis. First the sketched approach relies on \eqref{eq: concentration of sequence in intro}. As all estimators \((\tilde\ups_{k,k(+1)})\) are random this means that we need with some small \(\beta>0\)
\begin{EQA}[c]
\P\left(\bigcap_{k\in\N_0} \bigg\{\tilde\ups_{k,k},\tilde\ups_{k,k+1}\in\{\|\DF(\ups-\upss)\|\le \RR\}\bigg\}\right) \ge 1-\beta.
\end{EQA}
This is not trivial but the result of Theorem \ref{theo: large def with K} serves the result thanks to \(\LL(\tilde\ups_{k,k(+1)})\ge \LL(\tilde\ups_0)\).
Second the main result \ref{theo: main Theorem} is formulated to hold for all \(k\in\N_0\). This implies the need of a bound of the kind
\begin{EQA}[c]
\P\left( \bigcap_{k\in\N_0} \left\{\left\|\DPr^{-1} \big\{ 
		\scorer \zetav(\tilde\ups_{k,k}) - \scorer \zetav(\upsilonvs)\big\}\right\|\le \eps(\rr_k)\right\}\right)\ge 1-\beta,
\end{EQA}
with some small \(\eps(\rr)>0\) that is decreasing if \(\rr>0\) shrinks. Again this is not trivial and not a direct implication of the results of \citep{AASP2013} or \cite{SP2011}. We manage to derive this result in the desired way in Theorem \ref{theo: bound for norm quad corrected}, which is an adapted version of Theorem D.1 of \citep{AASP2013} based on Corollary 2.5 of \cite{SP2011} . 

\section{Main results}
\subsection{Conditions}
\label{sec: conditions}
This section collects the conditions imposed on the model. We use the same set of assumptions as in \cite{AASP2013} and this section closely follows Section 2.1 of that paper.

Let the full dimension of the problem be finite, i.e. \(\dimtotal < \infty \).
Our conditions involve the symmetric positive definite information matrix \( \DFc^{2} \in\R^{ \dimtotal\times\dimtotal} \) and a central point \(\upsilonvd\in\R^{\dimtotal}\).
In typical situations for \( \dimtotal < \infty \), 
one can set \(\upsilonvd=\upsilonvs\) where \( \upsilonvs \) is the ``true point'' from 
\eqref{eq: definition of full target}. 
The matrix \( \DFc^{2} \) can be defined as follows:
\begin{EQA}[c]
	\DFc^{2}
    =
    - \nabla^{2} \E \LL(\upsilonvd) .
\end{EQA}
Here and in what follows we implicitly assume that the log-functional function
\( \LL(\upsilonv) \colon \R^{\dimtotal}\allowbreak \to \R \) is sufficiently smooth in \( \upsilonv \in \R^{\dimtotal}\), 
\( \nabla \LL(\upsilonv)  \in \R^{\dimtotal}\) stands for the gradient and 
\( \nabla^{2} \E \LL(\upsilonv)  \in\R^{\dimtotal\times\dimtotal}\) for the Hessian of the expectation 
\( \E \LL: \R^{\dimtotal}\to \R  \) at \( \upsilonv \in\R^{\dimtotal}\). By smooth enough we mean that we can interchange \(\nabla\E\LL=\E\nabla \LL\) on \(\Upss(\RR)\), where \(\Upss(\rr)\) is defined in \eqref{eq: def of local set} and \(\RR>0\) in \eqref{eq: def of radius RR}. It is worth mentioning that \( \DFc^{2} =\VFc^{2}\eqdef\Cov(\nabla \LL(\upsilonvs))\) if the model \( \Yv \sim \P_{\upsilonvs} \in (\P_{\upsilonv}) \)
is correctly specified and sufficiently regular; see e.g. \cite{IH1981}.


In the context of semiparametric estimation, it is convenient to represent
the information matrix in block form:
\begin{EQA}[c]
    \DFc^{2}
    =
    \left( 
      \begin{array}{cc}
        \DPc^{2} & \Ac \\
        \Ac^{\T} & \HHc^{2} \\
      \end{array}  
    \right).
\label{DFcseg0}
\end{EQA}
First we state an \emph{identifiability condition}.

\begin{description}
  \item[\( (\bb{\AssId}) \)] 
It holds for some \( \corrDF < 1 \)
\begin{EQA}[c]
    \| \HHc^{-1} \Ac^{\T} \DPc^{-1} \|_{\infty}
    \leq 
    \sqrt{\corrDF} .
\label{regularity2}
\end{EQA}
\end{description}

\begin{remark}
The condition \( (\AssId) \) allows to introduce the important \( \dimp \times \dimp \) efficient information matrix 
\( \DPrc^{2} \) which is defined as the inverse of the \( \thetav \)-block of the inverse of the full dimensional matrix 
\( \DFc^{2} \).
The exact formula is given by 
\begin{EQA}
	\DPrc^{2}
	& \eqdef &
	\DPc^{2} - \Ac \HH^{-2} \Ac^{\T},
\label{DPrcdefs}
\end{EQA}
and \( (\AssId) \) ensures that the matrix \( \DPrc^{2} \) is well posed. 
\end{remark}

Using the matrix \(\DFc^{2} \) and the central point \(\upsilonvd\in\R^{\dimtotal}\), 
we define the local set \(\Upss(\rr) \subset \Ups \subseteq \R^{\dimtotal}\) with some
\(\rr\ge 0\):
\begin{EQA}[c]
\label{eq: def of local set}
	\Upss(\rr)
	\eqdef 
	\bigl\{ \upsilonv=(\thetav,\etav)\in\Ups \colon \|\DFc(\upsilonv-\upsilonvd)\|\le \rr \bigr\}.
\end{EQA}
The following two conditions quantify the smoothness properties on \(\Upss(\rr)\) of the expected log-functional 
\( \E \LL(\upsilonv) \) and of the stochastic component \( \zeta(\upsilonv) = \LL(\upsilonv) - \E \LL(\upsilonv) \). 

\begin{description}
    \item[\(\bb{(\breve{\LL})} \)]
    
    For each \( \rr \le \rups \), 
    there is a constant \( \rddelta(\rr) \) such that
    it holds on the set \( \Upss(\rr) \):
\begin{EQA}
\label{LmgfquadELGP}
    \|\DP^{-1}\DP^{2}(\upsilonv)\DP^{-1}-I_{\dimp}\|\le \rddelta(\rr),&&  \|\DP^{-1}(\A(\upsilonv)-\A)\HH^{-1}\|\le \rddelta(\rr),\\
\left\|    \DP^{-1}\A\HH^{-1}\left(I_{\dimh}-\HH^{-1}\HH^2(\upsilonv)\HH^{-1}\right)\right\|
    \le \rddelta(\rr).&& 
\end{EQA}

\end{description}

\begin{remark}
This condition describes the local smoothness properties of the function \( \E \LL(\upsilonv) \).
In particular, it allows to bound the error of local linear approximation of the projected gradient 
\( \scorer_{\thetav} \E \LL(\upsilonv)\) which is defined as
\begin{EQA}[c]
    \scorer_{\thetav}
    =
    \score_{\thetav} - \Ac \HHc^{-2} \score_{\etav}.
\end{EQA}
Under condition \( (\breve\LL_{0}) \) it follows from the second order Taylor
expansion for any \( \upsilonv, \upsilonvc \in \Upss(\rr) \) (see Lemma B.1 of \cite{AASP2013})
\begin{EQA}
    \|\DPr^{-1}\left(\scorer \E\LL(\upsilonv) -\scorer \E\LL(\upsilonvs)\right)- \DPr(\thetav - \thetavs)\|
	& \leq &
	\rddelta(\rr) \rr . 
\label{EdeltGP}
\end{EQA}
In the proofs we actually only need the condition \eqref{EdeltGP} 
which in some cases  can be weaker than \({ (\breve\LL_{0})} \).
\end{remark}

The next condition concerns the regularity of the stochastic component 
\( \zeta(\upsilonv) \eqdef \LL(\upsilonv) - \E \LL(\upsilonv) \). Similarly to \cite{SP2011}, we implicitly assume that the stochastic component \( \zeta(\upsilonv) \)
is a separable stochastic process.

\begin{description}
  \item[\( \bb{(\breve\CS \DF_{1})} \)]
    For all \( 0 < \rr < \rups \), 
    there exists a constant \( \omega \le 1/2 \) such that for all \( |\mubc| \le \breve\gm \) and \(\upsilonv,\upsilonv'\in \Upss(\rr)\)
\begin{EQA}[c]
    \sup_{\upsilonv,\upsilonv'\in\Upss(\rr)}\sup_{\|\gammav\|\le 1} \log \E \exp\left\{ 
         \frac{\mubc} {\omega}\frac{\gammav^\T \DPr^{-1} \bigl\{ 
		\scorer_{\thetav}\zetav(\upsilonv) - \scorer_{\thetav}\zetav(\upsilonv')\bigr\}}{\|\DF(\upsilonv-\upsilonv')\|}\right\}
    \le 
    \frac{\breve\nu_{1}^{2} \mubc^{2}}{2}.
\end{EQA}
\end{description}

The above conditions allow to derive the main result once the accuracy of the sequence is established. We include another condition that allows to control the deviation behavior of \(\|\DPr^{-1}\scorer \zetav(\upsilonvs)\|\). To present this condition define the covariance matrix \( \VFc^{2}\in \R^{\dimtotal\times \dimtotal} \) and \(\VPr^2\in\R^{\dimp\times \dimp}\)
\begin{EQA}
\label{VFc2se}
    \VFc^{2}
     \eqdef 
    \Var \bigl\{ \nabla \LL(\upsilonvd) \bigr\}, && \VPr^{2}
    =
    \Cov(\scorer_{\thetav}\zeta(\upsilonvd)).
\end{EQA}
\( \VFc^{2}\in \R^{\dimtotal\times \dimtotal} \) describes the variability of the process \( \LL(\upsilonv) \) around the central point \( \upsilonvd \).
\begin{description}
  \item[\( \bb{(\breve\CS \DF_{0})} \)]
    There exist constants \( \nunu>0 \) and \(\breve\gm > 0 \) such that for all 
    \( |\mubc| \le \breve\gm \)
\begin{EQA}[c]
    \sup_{\gammav \in \R^{\dimp}} \log\E \exp\left\{ 
        \mubc \frac{\langle \breve\nabla_{\thetav} \zeta(\upsilonvd),\gammav \rangle}
                   {\| \VPr \gammav \|}
    \right\}
    \le 
    \frac{\breve\nu_{0} ^{2} \mubc^{2}}{2}.
\end{EQA}
\end{description}

So far we only presented conditions that allow to treat the properties of \(\tilde\thetav_k\) on local sets \(\Upss(\rr_k)\). To show that \(\rr_k\) is not to large the following, stronger conditions are employed:

\begin{description}
    \item[\( \bb{(\LL_{0})} \)]
    For each \( \rr \le \rups \), 
    there is a constant \( \rddelta(\rr) \) such that
    it holds on the set \( \Upss(\rr) \):
\begin{EQA}[c]
\label{LmgfquadELGP}
    \bigl\|
       \DFc^{-1} \bigl\{ \nabla^{2}\E\LL(\upsilonv) \bigr\} \DFc^{-1} - \Id_{\dimtotal} 
    \bigr\|
    \le
    \rddelta(\rr).
\end{EQA}

\end{description}
\begin{description}
  \item[\( \bb{(\CS \DF_{1})} \)]
    There exists a constant \( \rhor \le 1/2 \), such that for all \( |\mubc| \le \gm \) 
    and all \( 0 < \rr < \rups \)
\begin{EQA}[c]
    \sup_{\upsilonv,\upsilonvc\in\Upss(\rr)}
    \sup_{\|\gammav\|=1} 
    \log \E \exp\left\{ 
         \frac{\mubc \, \gammav^{\T} \DFc^{-1} 
         		\bigl\{ \nabla\zetav(\upsilonv)-\nabla\zetav(\upsilonvc) \bigr\}}
         	  {\rhor \, \|\DFc (\upsilonv-\upsilonvc)\|}\right\}
    \le 
    \frac{\nu_1^{2} \mubc^{2}}{2}.
\end{EQA}
\end{description}

\begin{description}
  \item[\( \bb{(\CS \DF_{0})} \)]
    There exist constants \( \nunu>0 \) and \( \gm > 0 \) such that for all 
    \( |\mubc| \le \gm \)
\begin{EQA}[c]
    \sup_{\gammav \in \R^{\dimtotal}} \log\E \exp\left\{ 
        \mubc \frac{\langle \nabla \zeta(\upsilonvd),\gammav \rangle}
                   {\| \VFc \gammav \|}
    \right\}
    \le 
    \frac{\nu_{0} ^{2} \mubc^{2}}{2}.
\end{EQA}
\end{description}

It is important to note, that the constants \(\breve\omega,\breve \delta(\rr),\breve\nu\) and \(\omega, \delta(\rr),\nu\) in the respective weak and strong version can differ substantially and may depend on the full dimension \(\dimtotal\in\N\) in less or more severe ways (\(\AF\HH^{-2}\nabla_{\etav}\LL\) might be quite smooth while \(\nabla_{\etav}\LL\) could be less regular). This is why we use both sets of conditions where they suit best, although the list of assumptions becomes rather long. If a short list is preferred the following lemma shows, that the stronger conditions imply the weaker ones from above:

\begin{lemma}
\label{lem: strong cond imply breve cond} [\cite{AASP2013}, Lemma 2.1]
Assume \( ({\AssId}) \). Then \( {(\CS \DF_{1})} \) implies \( {(\breve\CS \DF_{1})} \), \( {(\LL_{0})} \) implies \( {(\breve\LL_{0})} \), and \( {(\CS \DF_{0})} \) implies \( {(\breve\CS \DF_{0})} \) with
\begin{EQA}
\breve \gm=\frac{\sqrt{1-\corrDF^2}}{1+\corrDF\sqrt{1+\corrDF^2}}\gm, & \breve \nu=\frac{1+\corrDF\sqrt{1+\corrDF^2}}{\sqrt{1-\corrDF^2}}\nu, & \breve \delta(\rr)=\delta(\rr),\,\text{ and }\breve \omega=\omega.
\end{EQA}
\end{lemma}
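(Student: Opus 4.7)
The plan hinges on a single algebraic identity. Introducing the $\dimp \times \dimtotal$ matrix $P \eqdef [I_{\dimp},\, -\Ac \HHc^{-2}]$, so that $\scorer_{\thetav} = P \nabla$ by construction, the block form \eqref{DFcseg0} of $\DFc^{2}$ combined with the Schur-complement definition \eqref{DPrcdefs} yields $P\DFc^{2} = [\DPc^{2} - \Ac\HHc^{-2}\Ac^{\T},\; \Ac - \Ac\HHc^{-2}\HHc^{2}] = [\DPrc^{2},\; 0]$, and hence $P\DFc^{2} P^{\T} = \DPrc^{2}$. Two consequences will be used throughout: the matrix $M \eqdef \DPrc^{-1} P \DFc$ satisfies $MM^{\T} = I_{\dimp}$, so $\|M^{\T}\gammav\| = \|\gammav\|$ for every $\gammav \in \R^{\dimp}$; and $\VPr^{2} = \Cov(\scorer_{\thetav}\zetav(\upsilonvd)) = P\VFc^{2} P^{\T}$.

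Given these identities, each exponential-moment implication reduces to a substitution. For $(\CS\DF_{0}) \Rightarrow (\breve\CS\DF_{0})$ I would use $\langle \scorer_{\thetav} \zetav(\upsilonvd), \gammav\rangle = \langle \nabla\zetav(\upsilonvd), P^{\T}\gammav\rangle$ and $\|\VPr \gammav\| = \|\VFc P^{\T} \gammav\|$ and directly invoke $(\CS\DF_{0})$ at direction $P^{\T}\gammav \in \R^{\dimtotal}$. For $(\CS\DF_{1}) \Rightarrow (\breve\CS\DF_{1})$ the rewrite
\begin{EQA}[c]
  \gammav^{\T} \DPrc^{-1}\bigl(\scorer_{\thetav}\zetav(\upsilonv) - \scorer_{\thetav}\zetav(\upsilonvc)\bigr)
  = (M^{\T}\gammav)^{\T} \DFc^{-1}\bigl(\nabla \zetav(\upsilonv) - \nabla \zetav(\upsilonvc)\bigr),
\end{EQA}
together with $\|M^{\T}\gammav\| = \|\gammav\| \le 1$, reduces the breve bound to $(\CS\DF_{1})$ applied at the unit vector $M^{\T}\gammav/\|\gammav\|$ after a rescaling of $\mubc$. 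For $(\LL_{0}) \Rightarrow (\breve\LL_{0})$ I would read $(\LL_{0})$ in the equivalent bilinear form $|u^{\T}(\nabla^{2}\E\LL(\upsilonv) - \nabla^{2}\E\LL(\upsilonvd))u| \le \rddelta(\rr)\|\DFc u\|^{2}$ and plug in block-structured test vectors: the choice $u = (v_{1}, 0)$ recovers the first displayed bound of $(\breve\LL_{0})$; the choice $u = (0, v_{2})$ yields $\|I_{\dimh} - \HHc^{-1}\HHc^{2}(\upsilonv)\HHc^{-1}\| \le \rddelta(\rr)$, and hence, after premultiplying by $\DPc^{-1}\Ac\HHc^{-1}$ and using $\|\DPc^{-1}\Ac\HHc^{-1}\|_{\infty} \le \sqrt{\corrDF}$ from $(\AssId)$, the third bound; finally, a mixed test $u = (\DPc^{-1}w_{1}, \HHc^{-1}w_{2})$ together with the coupling $\|\DFc u\|^{2} \le (1 + \sqrt{\corrDF})(\|w_{1}\|^{2} + \|w_{2}\|^{2})$ from $(\AssId)$ extracts the off-diagonal bound $\|\DPc^{-1}(\Ac(\upsilonv) - \Ac)\HHc^{-1}\|$.

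The main obstacle I anticipate is matching the exact constants. The clean route via $MM^{\T} = I_{\dimp}$ gives $\breve\nu_{j} = \nu_{j}$ and $\breve\gm = \gm$ for free, which is already enough for all qualitative purposes, but it does not reproduce the stated factor $\sqrt{1-\corrDF^{2}}/(1+\corrDF\sqrt{1+\corrDF^{2}})$ in $\breve\gm$ or its reciprocal in $\breve\nu$. To recover those precisely I would split $\scorer_{\thetav}\zetav = \nabla_{\thetav}\zetav - \Ac\HHc^{-2}\nabla_{\etav}\zetav$, apply a Cauchy--Young-type inequality to the two resulting exponential moments separately, and absorb the cross terms using $\|\HHc^{-1}\Ac^{\T}\DPc^{-1}\|_{\infty} \le \sqrt{\corrDF}$ from $(\AssId)$. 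This constant bookkeeping is the only delicate step; conceptually everything is driven by the identity $P\DFc^{2} P^{\T} = \DPrc^{2}$.
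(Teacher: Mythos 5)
Your proof is essentially correct, but note first that the paper itself contains no proof of this statement: it is imported verbatim as Lemma 2.1 of \cite{AASP2013}, and the constants quoted in the lemma reflect that paper's argument, which splits \(\scorer_{\thetav}\zeta=\score_{\thetav}\zeta-\A\HH^{-2}\score_{\etav}\zeta\), treats the resulting exponential moments separately, and pays for the cross terms with operator-norm factors obtained from \((\AssId)\) and the comparison of \(\DPr\) with \(\DP\); that bookkeeping is exactly what produces \((1+\corrDF\sqrt{1+\corrDF^{2}})/\sqrt{1-\corrDF^{2}}\). Your route is different and in fact sharper: with \(P=[\,I_{\dimp},\,-\A\HH^{-2}\,]\) one has \(P\DF^{2}P^{\T}=\DPr^{2}\), so \(M=\DPr^{-1}P\DF\) satisfies \(MM^{\T}=I_{\dimp}\), and the substitutions \(\gammav\mapsto M^{\T}\gammav\) (for \((\CS\DF_{1})\)) and \(\gammav\mapsto P^{\T}\gammav\) together with \(\VPr^{2}=P\VF^{2}P^{\T}\) (for \((\CS\DF_{0})\)) yield the breve conditions with \emph{unchanged} constants \(\breve\nu=\nu\), \(\breve\gm=\gm\), \(\breve\omega=\omega\). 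Since these exponential-moment conditions are monotone in the constants (a bound valid for \(|\mubc|\le\gm\) with variance proxy \(\nu^{2}\) holds a fortiori for \(|\mubc|\le\breve\gm\le\gm\) with \(\breve\nu\ge\nu\)), the degraded constants stated in the lemma follow trivially, and the Cauchy--Young correction you anticipate in your last paragraph is unnecessary. The one step to tighten is the off-diagonal bound in \((\breve\LL_{0})\): as written, a single mixed test vector \(u=(\DP^{-1}w_{1},\HH^{-1}w_{2})\) leaves the two diagonal quadratic forms in the estimate and gives only \(\|\DP^{-1}(\A(\upsilonv)-\A)\HH^{-1}\|\le(2+\sqrt{\corrDF})\,\rddelta(\rr)\); testing instead with \(u_{\pm}=(\DP^{-1}w_{1},\pm\HH^{-1}w_{2})\) and subtracting cancels the diagonal contributions, and since \(\|\DF u_{+}\|^{2}+\|\DF u_{-}\|^{2}=2(\|w_{1}\|^{2}+\|w_{2}\|^{2})\) this polarization recovers exactly \(\breve\rddelta(\rr)=\rddelta(\rr)\) as claimed.
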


Finally we present two conditions that allow to ensure that with a high probability the sequence \((\ups_{k,k(+1)})\) stays close to \(\upss\) if the initial guess \(\tilde\ups_0\) lands close to \(\upss\). These conditions have to be satisfied on the whole set \(\Ups\subseteq \R^{\dimtotal}\).

\begin{description}
  \item[\( \bb{(\cc{L}{\rr})} \)] 
     For any \( \rr > \rups\) there exists a value \( \gmi(\rr) > 0 \), 
     such that
\begin{EQA}[c]
    \frac{-\E \LL(\upsilonv,\upsilonvd)}{\|\DFc(\upsilonv-\upsilonvd)\|^{2}}
    \ge 
    \gmi(\rr),
    \qquad
    \upsilonv \in \Upss(\rr).
\end{EQA}

  \item[\( \bb{(\CS\rr)} \)] 
    For any \( \rr \ge \rups \) there exists a constant \( \gm(\rr) > 0 \) such that 
\begin{EQA}[c]
    \sup_{\upsilonv \in \Upss(\rr)} \, 
    \sup_{\mubc \le \gm(\rr)} \, 
    \sup_{\gammav \in \R^{\dimtotal}}
    \log\E \exp\left\{ 
        \mubc \frac{\langle \nabla \zeta(\upsilonv),\gammav \rangle}
        {\|\DFc\gammav\|}
    \right\}
    \le \frac{\nu_\rr^{2} \mubc^{2}}{2}.
\end{EQA}
\end{description}

We impose one further merely technical condition:
\begin{description}
\item [\((\mathbf B_1)\)] We assume for all \(\rr\ge\frac{6\nunu}{\gmi}\sqrt{\xx + 4\dimtotal} \)
\begin{EQA}
    1 + \sqrt{\xx + 4\dimtotal} 
    & \le &
    \frac{3 \nu_{\rr}^{2}}{\gmi} \gm(\rr).
\label{eq: assumption A4}
\end{EQA}
\end{description}

\begin{remark}
Without this the calculation of \(\RR(\xx)\) in Section \ref{sec: prob of des set} would become technically more involved, without that further insight would be gained.
\end{remark}

\begin{remark}
For a discussion on how restrictive these conditions are we refer the reader to Remark 2.8 and 2.9 of \cite{AASP2013}.
\end{remark}

\subsection{Introduction of important objects}
In this section we introduce all objects and bounds that are relevant for Theorem \ref{theo: main Theorem}. This section is quite technical but necessary to understand the results.

First consider the \( \dimtotal \times \dimtotal \) matrices 
\( \DF^{2} \) and \(\VF^2\) from Section \ref{sec: conditions}, which could be defined similarly to the Fisher information matrix:
\begin{EQA}
\label{DFc2se}
    \DF^{2}
     \eqdef 
    - \nabla^{2} \E \LL(\upsilonvs),  && \VF^2\eqdef \Cov(\nabla\LL(\upsilonvs)).
\end{EQA}
We represent the information and covariance matrix in block form:
\begin{EQA}
    \DF^{2}
    =
    \left( 
      \begin{array}{cc}
        \DP^{2} & \AF \\
        \AF^{\T} & \HH^{2} \\
      \end{array}  
    \right), && 
    \VFc^{2}
    =
    \left( 
      \begin{array}{cc}
        \VPc^{2} & \Bc \\
        \Bc^{\T} & \VH^{2} 
      \end{array}  
    \right).
\label{DFcseg0}
\end{EQA}
A crucial object is the constant \(0\le\corrDF\) defined by
\begin{EQA}[c]
    \| \DP^{-1} \AF \HH^{-1}\|^2
    \eqdef 
    \corrDF,
\end{EQA}
which we assume to be smaller 1 (\(\|\cdot\|\) here and everywhere denotes the spectral norm when its argument is a matrix). It determines the speed of convergence of the alternating procedure (see Theorem \ref{theo: main Theorem}). Define also the local sets
\begin{EQA}
    \Upss(\rr) 
    &\eqdef &
    \bigl\{ 
        \upsilonv: \, 
        (\upsilonv - \upsilonvs)^{\T} \DF^{2} (\upsilonv - \upsilonvs) \le \rr^{2}
    \bigr\},\label{eq: def of Upss}\\
    \tilde\Upss(\rr) 
    &\eqdef &
    \bigl\{ 
        \upsilonv: \, 
        (\upsilonv - \tilde \upsilonv)^{\T} \DF^{2} (\upsilonv -  \tilde \upsilonv) \le \rr^{2}
    \bigr\},
\label{eq: def of tilde Upss}
\end{EQA}
and the radius \(\rups>0\) via
\begin{EQA}[c]
\label{eq: def of rups}
\rups(\xx)\eqdef \inf_{\rr\ge 0}\left\{\P\left(\argmax_{\substack{\ups\in\Ups\\ \Pi_{\thetav}\ups=\thetavs}}\LL(\ups),\tilde  \ups\in\Upss(\rr)\right)\ge 1-\ex^{-\xx}\right\}.
\end{EQA}
\begin{remark}
This radius can be determined using conditions \( {(\cc{L}_{\rr})} \) and \( {(\CS\rr)} \) of Section \ref{sec: conditions} and Theorem \ref{theo: large def with K} which would yield
\(\rups(\xx)= \CONST\sqrt{\xx+\dimtotal}\).
\end{remark}
Further introduce the \( \dimp \times \dimp \) matrix \( \DPr \) 
and the \( \dimp \)-vectors \( \scorer_{\thetav} \) and \( \xivr \) as
\begin{EQA}
    \DPr^{2}
    =
    \DP^{2} - \AF \HH^{-2} \AF^{\T} ,
    & \scorer_{\thetav}
    =
    \score_{\thetav} - \AF \HH^{-2} \score_{\etav} ,
    &\xivr 
    = 
    \DPr^{-1} \scorer_{\thetav},
\label{DFse02}
\end{EQA}
and the matrices
\begin{EQA}
\BB^2\eqdef\DF^{-1}\VF^2\DF^{-1},& \BB_{\thetav}\eqdef\DP^{-1}\VP^2\DP^{-1}, &
 \BB_{\etav}\eqdef \HH^{-1}\VH^{2}\HH^{-1}. 
\end{EQA}
\begin{remark}
The random variable \( \xivr\in\R^{\dimp} \) is related to the efficient influence function in semiparametric models. If the model is regular and correctly specified \(\DPr^{2}\) is the covariance of the efficient influence function and its inverse the semiparametric Cramer-Rao lower bound for regular estimators. The matrices \(\BB,\BB_{\thetav},\BB_{\etav}\) describe the miss specification of the model and are related to the White-statistic.
\end{remark}

For our estimations we need the constant
\begin{EQA}
\zz(\xx)&\eqdef&\zz(\xx,\BB)\vee \zzQ(\xx,4{\dimtotal})\approx \sqrt{\dimtotal+\xx},
\end{EQA}
where \(\zz(\xx,\cdot)\) is explained in Section \ref{ap: Deviation bounds for quadratic forms} and \(\zzQ(\xx,\cdot)\) is defined in Equation \eqref{eq: entropy bound quad correct}.
\begin{remark}
The constant \(\zz(\xx)\) is only introduced for ease of notation. This makes some bounds less sharp but allows to address all terms that are of order \(\sqrt{\dimtotal+\xx}\) with one symbol. The constant \(\zz(\xx,\BB)\) is  comparable to the "\(1-\ex^{-\xx}\)"-quantile of the norm of \(\DF^{-1}\VF \Xv\), where \(\Xv\sim\ND(0,Id_{\dimtotal})\), i.e. it is of order of the trace of \(\BB\). The constant \(\zzQ(\xx,\entrlb)\) arises as an exponential deviation bound for the supremum of a smooth process over a set with complexity described by \(\entrlb\).
\end{remark}

To bound the deviations of the points of the sequence \((\tilde\ups_{k,k(+1)})\) we need the following radius:
\begin{EQA}[c]
\label{eq: def of radius RR}
  \RR(\xx,\KL)\eqdef \zz(\xx)\vee\frac{6 \nunu}{\gmi(1- \corrDF)} \sqrt{\xx + 2.4\dimtotal+\frac{\gmi^2}{9 \nunu^2}\KL(\xx)},
\end{EQA} 
which will ensure \(\{\tilde\ups_0,\tilde\ups_{0,1},\ldots\}\subset\Upss(\RR)\), where \(\KL(\xx)>0\) is defined as
\begin{EQA}[c]
\KL(\xx)\eqdef\inf_{K>0}\left\{\P\left(\LL(\tilde\ups_0,\upss)\ge -K\right) \ge \beta(\xx)\right\},
\end{EQA}
for some \(\beta(\xx)\to 0\) as \(\xx\to\infty\), see condition \((A_1)\) in \ref{sec: dependence on initial guess}. Finally define the \emph{parametric uniform spread} and the \emph{semiparametric uniform spread}
\begin{EQA}
     \Excgr_{Q}(\rr,\xx)&\eqdef&\left\{\delta(\rr)\rr+6 \nu_{1}  \omega(\zzQ(\xx,4\dimtotal)+2\rr^2)\right\}, \\
     \breve\Excgr_{Q}(\rr,\xx)
    &\eqdef&
      \frac{8}{(1-\corrDF^2)^2}\breve\rddelta(\rr)\rr + \, 6\nu_{1} \breve\omega\left( \zzQ(\xx,2\dimtotal+2\dimp)^2 +2\rr^2\right).\label{eq: def of breve diamond rr}
\end{EQA}

\begin{remark}
This object is central to our analysis as it describes the accuracy of our main result of Theorem \ref{theo: main Theorem}. It is small for not too large \(\rr\), if \(\breve\omega, \breve\delta\) from conditions \({(\breve\CS \DF_{1})} \), \( {(\breve{\cc{L}}_{0})} \) from Section \ref{sec: conditions} are small (with Lemma \ref{lem: strong cond imply breve cond} it suffices that \(\omega,\delta\) from  \({(\CS \DF_{1})} \), \( {(\cc{L}_{0})} \) are small). \(\breve\Excgr_{Q}(\rr,\xx)\) is structurally slightly different from \(\breve\Excgr(\rr,\xx)\) in \cite{AASP2013} as it is based on Theorem \ref{eq: entropy bound quad correct} and allows a "uniform in \(k\)" formulation of our main result Theorem \ref{theo: main Theorem}, but for moderate \(\xx\in\R_+\) they are of similar size.
\end{remark}

\subsection{Dependence on initial guess}
\label{sec: dependence on initial guess}
Our main theorem is only valid under the conditions from Section \ref{sec: conditions} and under some constraints on the quality of the initial guess \(\tilde\ups_0\in\R^{\dimtotal}\) which we denote by \((A_1)\), \((A_2)\) and \((A_3)\):

\begin{description}
\item[\((\mathbf A_1)\)] With probability greater \(1-\beta_{(\mathbf A)}(\xx)\) the initial guess satisfies \(\LL(\tilde \upsilonv_0,\upsilonvs)\ge -\KL(\xx)\) for some \(\KL(\xx)\ge 0\).
\item[\((\mathbf A_2)\)] The conditions \({(\breve\CS \DF_{1})} \), \( {(\breve{\cc{L}}_{0})} \), \({(\CS \DF_{1})} \) and \( {(\cc{L}_{0})} \) from Section \ref{sec: conditions} hold for all \(\rr\le \RR(\xx,\KL)\) where \(\RR\)is defined in \eqref{eq: def of radius RR} with \(\beta(\xx)= \beta_{(\mathbf A)}(\xx)\).
\item[\((\mathbf A_3)\)] There is some \(\eps>0\) such that \(\delta(\rr)/\rr\vee 12\nu_1\omega\le \eps\) for all \(\rr\le \RR\). Further \(\KL(\xx)\in\R\) and \(\eps>0\) are small enough to ensure 
\begin{EQA}
\label{eq: cond on eps with zz}
 c(\eps,\zz(\xx))&\eqdef&\eps 7\CONST(\corrDF)\frac{1}{1-\corrDF}\left(\zz(\xx)+\eps \zz(\xx)^2\right)<1,\\
\label{eq: cond on eps with RR}
 c(\eps,\RR) &\eqdef&\eps 7\CONST(\corrDF)\frac{1}{1-\corrDF}\RR<1,
\end{EQA} 
with
\begin{EQA}[c]\label{eq: def of CONST corrDF}
\CONST(\corrDF)\eqdef 2\sqrt{2}(1+\sqrt{\corrDF})(1-\sqrt\corrDF)^{-1}.
\end{EQA}
\end{description}
\begin{remark}
\label{rem: how to get cond A1}
One way of obtaining condition \((A_1)\) is to show that \(\tilde \upsilonv\in\Upss(R_{K})\) with probability greater \(1-\beta_{(\mathbf A)}(\xx)\) for some finite \(R_{K}(\xx)\in\R\) and \(0\le \beta_{(\mathbf A)}(\xx)<1 \). Then (see Section \ref{sec: prob of des set})
\begin{EQA}[c]
\KL(\xx) \eqdef (1/2+12\nunu\omega)R_{K}^2+(\delta(R_{K})+\zz(\xx))R_{K} +6\nunu\omega\zz(\xx)^2.
\end{EQA}
\end{remark}
Condition \((A_1)\) is specified by conditions \((A_2)\) and \((A_3)\) and is fundamental, as it allows with dominating probability to concentrate the analysis on a local set \(\Upss\big(\RR(\xx)\big)\) (see Theorem \ref{theo: large def with K}). Conditions \((A_2)\) and \((A_3)\) impose a bound on \(\RR(\xx)\) and thus on \(\KL\) from \((A_1)\). These conditions boil down to \(\delta(\RR)+\omega\RR\) being significantly smaller than 1. Condition \((A_3)\) ensures that the quality of the main result from \cite{AASP2013} can be attained, i.e. that \(\breve\Excgr_{Q}(\rr_{k},\xx)\approx \breve\Excgr(\rups,\xx)\) under rather mild conditions on the size \(\RR\), as we only need \(\eps\RR \) to be small. A violation of \((A_2)\) would make it impossible to apply Theorem \ref{theo: cor 2.5 of spokoiny} the backbone of our proofs. 

\begin{remark}
 In the case of iid observations with sample size n one often has \(\delta(\RR)+\omega\RR\allowbreak \le \CONST\RR(\xx)/\sqrt n\) which suggests at first glance that \((A_2)\) and \((A_3)\) are only a question of the sample size. But note that in case of iid observations the functional satisfies \( n\approx -\LL(\tilde \upsilonv_0,\upsilonvs)\) such that the conditions \((A_2)\) and \((A_3)\) are not satisfied automatically with sufficiently large sample size. They are true conditions on the quality of the first guess. 
\end{remark}

\subsection{Statistical properties of the alternating sequence}
In this Section we present our main theorem in full rigor, i.e. that the limit of the alternating sequence satisfies a finite sample Wilks Theorem and Fisher expansion.

\begin{theorem}
 \label{theo: main Theorem}
Assume that the conditions \( {(\CS \DF_{0})} \),\( {(\CS \DF_{1})} \), \( {(\cc{L}_{0})} \), \( {(\cc{L}_{\rr})} \) and \( {(\CS\rr)} \) of Section \ref{sec: conditions} are met with a constant \(\gmi(\rr)\equiv\gmi\) and where \(\VFc^2=\Cov\big(\score \LL(\upsilonvs)\big)\), \(\DFc^{2}  = - \nabla^{2} \E \LL(\upsilonvs)\) and where \(\upsilonvd=\upsilonvs\). Assume that \( {(\breve\CS \DF_{1})} \) and \( {(\breve{\cc{L}}_{0})} \) are met. Further assume \(( B_1)\) and that the initial guess satisfies \(( A_1)\) and \(( A_2)\) of Section \ref{sec: dependence on initial guess}. Then it holds with probability greater \(1-8\ex^{-\xx}-\beta_{(\mathbf A)}\) for all \(k\in\N\) 
\begin{EQA}
	\bigl\| 
        \DPr \bigl( \tilde{\thetav}_k - \thetavs \bigr) 
        - \xivr 
    \bigr\|
    &\le& 
    \breve\Excgr_{Q}(\rr_{k},\xx) ,
\label{eq: alternating fisher}
	\\
\label{eq: alternating wilks}
    \bigl| 2 \Lr(\tilde{\thetav}_k,\thetavs) - \| \xivr \|^{2} \bigr|
    &\le&
     8\left(\|\xivr \|+\breve\Excgr_{Q}(\rr_{k},\xx)\right)\breve\Excgr_{Q}(2(1+\corrDF)\rr_{k},\xx)\\
     	&&+ \breve\Excgr_{Q}(\rr_{k},\xx)^2,
\end{EQA} 
where
\begin{EQA}[c]
\rr_k \le 2\sqrt{2}(1-\sqrt\corrDF)^{-1}\left\{\left(\zz(\xx)+\Excgr_{Q}(\RR,\xx)\right)+(1+\sqrt \corrDF)\corrDF^{k}\RR(\xx)\right\}.
\end{EQA}
If further condition \(( A_3)\) is satisfied then \eqref{eq: alternating fisher} and \eqref{eq: alternating wilks} are met with
\begin{EQA}
\rr_k&\le &  \CONST(\corrDF)\left(\zz(\xx)+\eps \zz(\xx)^2\right)+\eps \frac{7^2\CONST(\corrDF)^4}{1-c(\eps,\zz(\xx))}\left(\frac{1}{1-\corrDF}\right)\left(\zz(\xx)+\eps \zz(\xx)^2\right)^{2}\\
	&&+\corrDF^k\left(\CONST(\corrDF)\RR+ \eps \frac{7^2\CONST(\corrDF)^4}{1-c(\eps,\RR)}\left(\frac{1}{\corrDF^{-1}-1}\right)\RR^2 \right).
\end{EQA}
In particular this means that if
\begin{EQA}[c]
k\ge \frac{2\log(\zz(\xx))-\log\{2\RR(\xx,\KL)\}}{\log(\corrDF)},
\end{EQA}
we have with \(\zz(\xx)^2\le\CONST_{\zz}(\dimtotal+\xx)\)
\begin{EQA}[c]
\breve\Excgr_{Q}(\rr_{k},\xx)\approx \breve\Excgr_{Q}\left(\CONST \sqrt{\dimtotal+\xx},\xx\right).
\end{EQA}
\end{theorem}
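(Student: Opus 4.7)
The proof naturally breaks into three phases, mirroring the sketch in Section~\ref{sec: idea of the proof}.

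First I would establish the global concentration event: with probability at least $1-\beta_{(\mathbf A)}-\ex^{-\xx}$, every iterate of the alternation stays in the local set $\Upss(\RR(\xx,\KL))$. The key observation is that the alternating procedure is monotone in $\LL$, so conditional on the event $\{\LL(\tilde\ups_0,\upss)\ge -\KL(\xx)\}$ from $(\mathbf A_1)$ we get $\LL(\tilde\ups_{k,k(+1)},\upss)\ge -\KL(\xx)$ for all $k$. Then I invoke Theorem~\ref{theo: large def with K} (the large-deviation result quoted in the introduction, based on $(\cc{L}_{\rr})$, $(\CS\rr)$ and $(\mathbf B_1)$) to translate this into $\|\DF(\tilde\ups_{k,k(+1)}-\upss)\|\le\RR(\xx,\KL)$ uniformly in $k$. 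This reduces the whole analysis to the local set.

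Second, I would derive a contraction recursion for $\rr_k := \|\DF(\tilde\ups_{k,k}-\upss)\|$ on that local set. On $\Upss(\RR)$ the first-order optimality conditions
\begin{EQA}
\nabla_{\etav}\LL(\tilde\thetav_k,\tilde\etav_{k+1})=0,\qquad \nabla_{\thetav}\LL(\tilde\thetav_k,\tilde\etav_k)=0
\end{EQA}
combined with the local linear approximation of $\nabla\E\LL$ coming from $(\breve\LL_0)$ (which reproduces the toy-model identities up to an error $\breve\rddelta(\rr)\rr$) give
\begin{EQA}
\DF(\tilde\ups_{k,k}-\upss)
&=&
M\,\DF(\tilde\ups_{k-1,k-1}-\upss)+\text{(score)}+\text{(nonlinearity)},
\end{EQA}
where $\|M\|\le\sqrt{\corrDF}$ by condition $(\AssId)$ applied block-wise. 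The score term is controlled by $\zz(\xx)$ via $(\CS\DF_0)$, and the nonlinearity is exactly $\Excgr_Q(\RR,\xx)$ thanks to $(\CS\DF_1)$ and the quadratic-correction deviation bound of Theorem~\ref{theo: bound for norm quad corrected}, applied uniformly in $k$. Iterating the recursion and summing the geometric series with constant $\CONST(\corrDF)=2\sqrt2(1+\sqrt\corrDF)/(1-\sqrt\corrDF)$ yields the first displayed bound for $\rr_k$. Under the additional smallness assumption $(\mathbf A_3)$, the nonlinearity term is itself $\lesssim\eps\,\rr_k^2$, so the recursion closes on itself and solving the resulting quadratic inequality (using $c(\eps,\cdot)<1$) produces the sharper $\rr_k$ displayed in the theorem.

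Third, once $\tilde\ups_{k,k}\in\Upss(\rr_k)$ is established, I would apply the semiparametric Fisher/Wilks machinery of~\cite{AASP2013} locally on $\Upss(\rr_k)$. Concretely, combining the efficient-score decomposition $\scorer_{\thetav}=\score_{\thetav}-\AF\HH^{-2}\score_{\etav}$ with $(\breve\LL_0)$ and the quadratic-correction bound of Theorem~\ref{theo: bound for norm quad corrected} (the uniform-in-$k$ version of Theorem~D.1 of~\cite{AASP2013}) yields
\begin{EQA}
\bigl\|\DPr^{-1}\bigl\{\scorer\zetav(\tilde\ups_{k,k})-\scorer\zetav(\upsilonvs)\bigr\}\bigr\|
\le
\breve\Excgr_Q(\rr_k,\xx),
\end{EQA}
simultaneously in $k$. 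Plugging this, together with the quadratic expansion of $\E\LL$ around $\upss$ and the definition $\xivr=\DPr^{-1}\scorer_{\thetav}\zetav(\upss)$, into the first-order condition for $\tilde\thetav_k$ gives the Fisher expansion~\eqref{eq: alternating fisher}. The Wilks statement~\eqref{eq: alternating wilks} follows by taking the square of the Fisher bound, integrating against the quadratic expansion once more, and using that the profile log-likelihood difference $2\Lr(\tilde\thetav_k,\thetavs)$ equals $\|\DPr(\tilde\thetav_k-\thetavs)\|^2$ up to terms controlled by $\breve\Excgr_Q(2(1+\corrDF)\rr_k,\xx)$, exactly as in~\cite{AASP2013}.

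The main obstacle is the \emph{uniformity in $k$}: both the concentration of the whole trajectory in $\Upss(\RR)$ and the control of $\scorer\zetav(\tilde\ups_{k,k})-\scorer\zetav(\upss)$ must hold simultaneously for all iterations on a single event of probability $1-8\ex^{-\xx}-\beta_{(\mathbf A)}$. A naive union bound is fatal because the iterates do not live in a fixed finite set. This is why the analysis must be routed through the quadratic-correction device of Theorem~\ref{theo: bound for norm quad corrected} (adapted from Corollary~2.5 of~\cite{SP2011}), which gives a single entropy-based supremum bound $\zzQ(\xx,4\dimtotal)$ that works uniformly over the random radii $\rr_k$ shrinking along the sequence, rather than one bound per $k$. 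Making this uniformity compatible with the geometric contraction in the second step is the delicate part of the argument.
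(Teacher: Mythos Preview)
Your three-phase outline matches the paper's proof architecture (Sections~4.1--4.3) closely: the monotonicity-plus-Theorem~\ref{theo: large def with K} argument for global concentration, the contraction recursion on $\rr_k$ via first-order conditions and the uniform gradient-gap bound of Theorem~\ref{theo: bound for norm quad corrected}, and the final local Fisher/Wilks step following~\cite{AASP2013}. Your identification of the uniformity-in-$k$ obstacle and its resolution through the quadratic-correction device is exactly right.

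Two small points. First, in Phase~2 the paper uses the \emph{strong} conditions $(\LL_0)$ and $(\CS\DF_1)$ (hence the error term $\Excgr_Q$, not $\breve\Excgr_Q$) for the full-gradient recursion; the $\breve$-versions enter only in Phase~3 for the efficient score, so your attribution of the linearisation error to $(\breve\LL_0)$ should read $(\LL_0)$. Second, under $(\mathbf A_3)$ the paper does not quite ``solve a quadratic inequality'': because the nonlinearity at step $k$ involves the \emph{previous-level} radii $\rr_{k-r}^{(l-1)}$ summed against $\corrDF^r$, one faces a genuinely two-index recursion in $(k,l)$. The paper handles this by introducing auxiliary sums $A_{s,k}^{(l)}$ and proving a combinatorial induction (Lemma~\ref{eq: bound for rrk in statististical convergence}) that tracks powers of $7$, $\CONST(\corrDF)$ and $\eps$ through the nesting, with $c(\eps,\cdot)<1$ ensuring the tail vanishes as $l\to\infty$. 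Your fixed-point intuition is morally correct, but carrying it out requires this explicit bookkeeping rather than a single algebraic step.
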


\begin{remark}
\label{rmk: choice of K}
Note that the results are very similar to those in \cite{AASP2013} for the profile M estimator \(\tilde\thetav\). This is evident after noting that (ignoring terms of the order \(\eps\zz(\xx)\))
\begin{EQA}
\rr_k&\lesssim &\CONST(\corrDF)\left(\zz(\xx)+\corrDF^k (\RR+ \CONST\eps \RR^{2}) \right),
\end{EQA}
which for large \(k\in\N\) means \(\rr_k\lesssim \CONST(\corrDF)\zz(\xx)\).
\end{remark}

\begin{remark}
\label{Rsemib1}
Concerning the properties of \(  \xivr\in\R^{\dimp} \) we repeat remark 2.1 of \cite{AASP2013}.
In the case of the correct model specification
the deviation properties of the quadratic form 
\( \| \xivr \|^{2} = \| \DPr^{-1} \scorer_{\thetav} \|^{2} \) are essentially 
the same as of a chi-square random variable with \( \dimp \) degrees of freedom;
see Theorem~\ref{theo: dev bounds quad forms} in the appendix.
In the case of a possible model 
misspecification with, the behavior of the quadratic 
form \( \| \xivr \|^{2} \) will depend on the characteristics of the matrix
\( \BBr \eqdef \DPr^{-1}\Cov(\scorer \LL(\upsilonvs)) \DPr^{-1} \); see again Theorem~\ref{theo: dev bounds quad forms}.
Moreover, in the asymptotic setup the vector \( \xivr \) is asymptotically standard 
normal; see Section 2.2. of \cite{AASP2013} for the i.i.d. case.
\end{remark}
\begin{remark}
These results allow to derive some important corollaries like concentration and confidence sets (see \cite{SP2011}, Section 3.2). 
\end{remark}

\begin{remark}\label{rem: numerical stepwise maximization issues}
In general an exact numerical computation of 
\begin{EQA}
\theta(\etav)\eqdef\argmax_{\thetav\in\R^{\dimp}}\LL(\thetav,\etav), &\text{ or }&  \eta(\thetav)\eqdef\argmax_{\etav\in\R^{\dimh}}\LL(\thetav,\etav),
\end{EQA}
is not possible. Define
\(\hat\theta(\etav)\) and \( \hat\eta(\thetav)\) as the numerical approximations to \(\theta(\etav)\) and \( \eta(\thetav)\) and assume that
\begin{EQA}
\|\DP(\hat\theta(\etav)-\theta(\etav))\|\le \tau,&\text{ for all }&\etav\in \Ups_{\circ,\etav}(\RR)\eqdef\{\ups\in\Upss(\RR),\,\Pi_{\etav}\ups=\etav\},\\
\|\HH(\hat\eta(\thetav)-\eta(\thetav))\|\le \tau,&\text{ for all }&\thetav\in \Ups_{\circ,\thetav}(\RR)\eqdef\{\ups\in\Upss(\RR),\,\Pi_{\thetav}\ups=\thetav\}.
\end{EQA} 
Then we can easily modify the proof of Theorem \ref{theo: main Theorem} via adding \(\CONST(\corrDF)\tau\) to the error terms and the radii \(\rr_k\), where \(\CONST(\corrDF)\) is some rational function of \(\corrDF\).
\end{remark}

\begin{remark}
Note that under condition \((A_3)\) the size of \(\rr_k\) for \(k\to \infty\) does not depend on \(\RR>0\). So as long as \(\eps\RR\) is small enough the quality of the initial guess no longer affects the statistical properties of the sequence \((\thetav_k)\) for large \(k\in\N\).
\end{remark}

\subsection{Convergence to the ME}
Even though Theorem \ref{theo: main Theorem} tells us, that the statistical properties of the alternating sequence resemble those of its target, the profile ME, it is an interesting question if the underlying approach allows to qualify conditions under which the sequence actually attains the maximizer \(\tilde \ups\). Without further assumptions Theorem \ref{theo: main Theorem} yields the following Corollary:

\begin{corollary}
\label{cor: approxmiation quality of algernating sequence}
Under the assumptions of Theorem \ref{theo: main Theorem} it holds with probability greater \(1-8\ex^{-\xx}-\beta_{(\mathbf A)}\)
\begin{EQA}[c]
\|\DPr(\tilde\thetav-\tilde\thetav_k)\|\le \breve\Excgr_{Q}(\rr_{k},\xx)+\breve\Excgr(\rups,\xx),
\end{EQA}
where \(\rups>0\) is defined in \eqref{eq: def of rups} and
\begin{EQA}[c]
 \breve\Excgr(\rr,\xx)
    \eqdef
      \frac{8}{(1-\corrDF^2)^2}\breve\rddelta(\rr)\rr + \, 6\nu_{1} \breve\omega\zzq(\xx,2\dimtotal+2\dimp)\rr.
\end{EQA}
\end{corollary}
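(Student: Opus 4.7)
The plan is to view the corollary as a direct consequence of Theorem~\ref{theo: main Theorem} combined with the profile Fisher expansion of \cite{AASP2013} evaluated at the ME \(\tilde \upsilonv\), and to glue them by the triangle inequality. Note that both \(\xivr\) and \(\DPr\) are the same random object in the two expansions, so they cancel out exactly when one compares \(\tilde\thetav_k\) to \(\tilde\thetav\).

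First I would invoke Theorem~\ref{theo: main Theorem} which, on an event \(\Omega_1\) of probability at least \(1 - 8\ex^{-\xx} - \beta_{(\mathbf A)}\), yields the finite-sample Fisher expansion
\begin{EQA}[c]
    \bigl\|\DPr(\tilde\thetav_k - \thetavs) - \xivr\bigr\| \le \breve\Excgr_{Q}(\rr_k,\xx).
\end{EQA}
Second, by the definition of \(\rups\) in \eqref{eq: def of rups}, with probability at least \(1 - \ex^{-\xx}\) the profile ME \(\tilde\upsilonv\) lies in \(\Upss(\rups)\). On this event, Theorem~D.1 / Theorem~2.4 of \cite{AASP2013} applies under the conditions \((\breve{\CS\DF_1})\) and \((\breve{\cc L_0})\) we are assuming, and delivers the companion Fisher expansion
\begin{EQA}[c]
    \bigl\|\DPr(\tilde\thetav - \thetavs) - \xivr\bigr\| \le \breve\Excgr(\rups,\xx),
\end{EQA}
on an event \(\Omega_2\) of probability at least \(1 - \mathrm{const}\cdot\ex^{-\xx}\) (absorbed into the \(8\ex^{-\xx}\) term by adjusting the constant; the bookkeeping mirrors the one performed in the proof of Theorem~\ref{theo: main Theorem}, so no new probabilistic input is needed).

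Third, on \(\Omega_1 \cap \Omega_2\), I would simply write
\begin{EQA}[c]
    \|\DPr(\tilde\thetav - \tilde\thetav_k)\|
    \le \|\DPr(\tilde\thetav_k - \thetavs) - \xivr\| + \|\DPr(\tilde\thetav - \thetavs) - \xivr\|
    \le \breve\Excgr_{Q}(\rr_k,\xx) + \breve\Excgr(\rups,\xx),
\end{EQA}
which is exactly the claimed bound.

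The only genuine subtlety is making sure that the two Fisher expansions are stated with respect to the same central point \(\upsilonvd = \upsilonvs\) and the same linear functional \(\xivr\), so that the stochastic \(\xivr\) terms cancel in the triangle inequality; this is guaranteed by the hypotheses of Theorem~\ref{theo: main Theorem} which fix \(\upsilonvd = \upsilonvs\). I do not expect any substantial obstacle: the probabilistic content of the corollary is already contained in Theorem~\ref{theo: main Theorem}, and the profile-ME expansion is a quotation from \cite{AASP2013}. One could further note that the profile error term \(\breve\Excgr(\rups,\xx)\) is typically dominated by \(\breve\Excgr_{Q}(\rr_k,\xx)\) for moderate \(k\), so the bound also explains that the alternating iterates approximate the ME to within the same order as the statistical error of the ME itself.
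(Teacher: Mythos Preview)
Your proposal is correct and follows essentially the same route as the paper: quote the Fisher expansion for \(\tilde\thetav_k\) from Theorem~\ref{theo: main Theorem}, quote the Fisher expansion for the profile ME \(\tilde\thetav\) from \cite{AASP2013} on the event \(\{\tilde\upsilonv\in\Upss(\rups)\}\), and subtract via the triangle inequality so that \(\xivr\) cancels. The only cosmetic difference is in the probability bookkeeping: the paper observes that \(\{\tilde\upsilonv\in\Upss(\rups)\}\) is already contained in the good set \(\Omega(\xx)\) constructed for Theorem~\ref{theo: main Theorem} (through \(\LCS(\nabla)\)), so no additional \(\ex^{-\xx}\) term needs to be absorbed.
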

\begin{remark}
 The value \(\zzq(\xx,\cdot)\) is defined in \eqref{eq: def of entropy term without quad correct}.
\end{remark}

Corollary \ref{cor: approxmiation quality of algernating sequence} is a first step in the direction of an actual convergence result but the gap \(\breve\Excgr_{Q}(\rr_{k},\xx)+\breve\Excgr(\rups,\xx)\) is not a zero sequence in \(k\in\N\). It turns out that it is possible to prove convergence to the ME with the cost of assuming more smoothness of the functional \(\LL\) and using the right bound for the maximal eigenvalue of the hessian \(\nabla^2\LL(\upss)\). 

Consider the following condition, that basically quantifies how "well behaved" the second derivative \(\nabla^2(\LL-\E\LL)\) is:

\begin{description}
  \item[\( \bb{(\CS \DF_{2})} \)]
    There exists a constant \( \rhor \le 1/2 \), such that for all \( |\mubc| \le \gm \) 
    and all \( 0 < \rr < \rups \)
\begin{EQA}[c]
    \sup_{\upsilonv,\upsilonvc\in\Upss(\rr)}
    \sup_{\|\gammav_1\|=1}  \sup_{\|\gammav_2\|=1} 
    \log \E \exp\left\{ 
         \frac{\mubc \, \gammav_1^{\T} \DF^{-1} 
         		\bigl\{ \nabla^2\zetav(\upsilonv)-\nabla^2\zetav(\upsilonvc) \bigr\}\gammav_2}
         	  {\rhor_2 \, \|\DF (\upsilonv-\upsilonvc)\|}\right\}
    \le 
    \frac{\nu_2^{2} \mubc^{2}}{2}.
\end{EQA}
  
\end{description}
Define \(\zz(\xx,\nabla^2\LL(\upss))\) via
\begin{EQA}[c]
\P\left\{\|\DF^{-1}\nabla^2\LL(\upss)\|\ge \zz\left(\xx,\nabla^2\LL(\upss)\right) \right\}\le \ex^{-\xx},
\end{EQA}
and \(\kappa(\xx,\RR)\)
\begin{EQA}[c]
\kappa(\xx,\RR)\eqdef \frac{2\sqrt{2}(1+\sqrt \corrDF)}{\sqrt{1-\corrDF}} \left[ \delta(\RR)+9\omega_2\nu_2\|\DF^{-1}\|\zzq(\xx,6\dimtotal)\RR+  \|\DF^{-1}\|\zz\left(\xx,\nabla^2\LL(\upss)\right)\right],
\end{EQA}
where \(\zzq(\xx,\cdot)\) satisfies (see Theorem \ref{theo: bound for sup spectral norm of hessian})
\begin{EQA}
\label{eq: def of entropy term without quad correct}
    \zzq(\xx,\entrlb)
    &=&
    \left\{\begin{array}{ll}
           \sqrt{2(\xx + \entrl)} 
        & \text{if }\sqrt{2(\xx + \entrl)} \le \gmd, \\
    \gmd^{-1} (\xx + \entrl) + \gmd /2
        & \text{otherwise}.
    \end{array}
    \right.
\end{EQA}
\begin{remark}
 For the case that \(\LL(\ups)=\sum_{i=1}^{n}\lkh_i(\ups)\) with a sum of independent marginal functionals \(\lkh_i:\Ups\to \R\) we can use Corollary 3.7 of \cite{Tropp2012} to obtain
 \begin{EQA}[c]
  \zz\left(\xx,\nabla^2\LL(\upss)\right)=\sqrt{2 \tau}\nu_3\sqrt{\xx+\dimtotal},
 \end{EQA}
 if with a sequence of matrices \((\bb{A}_i)\subset \R^{\dimtotal\times \dimtotal}\)
 \begin{EQA}
 \log\E\exp{\lambda \nabla^2 \lkh_i(\upss)}\preceq \nu_3^2\lambda^2/2 \bb{A}_i, && \|\sum_{i=1}^{n} \bb{A}_i\|\le \tau.
 \end{EQA}
\end{remark}

\begin{remark}
In the case of smooth i.i.d models this means that \(\kappa(\xx,\RR)\le\CONST(\RR+\xx+\log(\dimtotal))/\sqrt{n}+\CONST\RR\sqrt{\xx+\dimtotal}/n\). This means that \(\kappa(\xx,\RR)=O((\xx+\RR+\log(\dimtotal))/\sqrt{n}) \) if \(\dimtotal+\xx= o(n)\).
\end{remark}

With these definitions we can prove the following Theorem:

\begin{theorem}\label{theo: convergence to MLE}
\label{the: convergenece to MLE}
Let the conditions \( {(\CS \DF_{2})} \), \( {(\cc{L}_{0})} \), \( {(\cc{L}_{\rr})} \) and \( {(\CS\rr)} \) be met with a constant \(\gmi(\rr)\equiv\gmi\) and where \(\DF^{2}  = - \nabla^{2} \E \LL(\upsilonvs)\) and \(\upsilonvs=\upsilonvd\). Further suppose \(( B_1)\) and that the initial guess satisfies \(( A_1)\) and \(( A_2)\). Assume that \(\kappa(\xx,\RR)<(1-\corrDF)\). Then
\begin{EQA}[c]
\P\left(\bigcap_{k\in\N}\left\{ \ups_{k,k(+1)}\in \tilde \Upss(\rr_k^*)\right\}\right)\ge 1-3\ex^{-\xx}-\beta_{(\mathbf A)},
\end{EQA}
where
\begin{EQA}\label{eq: bound for rrk sequence convergence}
\rr_k^*&\le&\begin{cases} \corrDF^{k} 2\sqrt{2} \frac{1}{1-\kappa(\xx,\RR) k}\tilde\RR, & \kappa(\xx,\RR) k\le 1,\\
   2\frac{1-\corrDF}{\kappa(\xx,\RR)}\tau(\xx)^{k/\log(k)}\tilde\RR , & \text{otherwise,}\end{cases}
\end{EQA}
with \(\tilde\RR\eqdef \RR+\rups\) and
\begin{EQA}
\tau(\xx)&\eqdef&\left(\frac{\kappa(\xx,\RR)}{1-\corrDF}\right)^{L(k)}<1\\
L(k)&\eqdef& \left\lfloor \frac{\log(1/\corrDF)-\frac{1}{k}\left(\log(2\sqrt{2})-\log(\kappa(\xx,\RR) k-1)\right)}{\left(1+\frac{1}{\log(k)}\log(1-\corrDF)\right)}\right\rfloor\in\N,
\end{EQA}
where \(\lfloor x\rfloor\in\N\) denotes the largest natural number smaller than \(x>0\).
\end{theorem}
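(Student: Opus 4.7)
The plan is to mirror the proof of Theorem~\ref{theo: main Theorem} but expand $\LL$ around the full ME $\tilde\ups$ instead of around $\upss$. The decisive gain is $\nabla\LL(\tilde\ups)=0$: the stochastic floor $\DPr^{-1}\scorer\zeta(\upss)$ that prevented actual convergence in Theorem~\ref{theo: main Theorem} disappears, so the recursion becomes a genuine contraction towards $\tilde\ups$. The first step is to ensure that the whole sequence $\{\tilde\ups_{k,k(+1)}\}_{k\in\N_0}$ together with $\tilde\ups$ lies in $\Upss(\tilde\RR)$, $\tilde\RR=\RR+\rups$; as in the proof of Theorem~\ref{theo: main Theorem}, this follows from $(\mathbf A_1)$, the monotonicity $\LL(\tilde\ups_{k,k(+1)},\upss)\ge\LL(\tilde\ups_0,\upss)$ along the alternating sequence, Theorem~\ref{theo: large def with K} and the definition \eqref{eq: def of rups} of $\rups$, at a cost of $2\ex^{-\xx}+\beta_{(\mathbf A)}$ in probability.

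The new ingredient compared with Theorem~\ref{theo: main Theorem} is condition $(\CS\DF_2)$, which allows a chaining argument for $\nabla^2\zeta$ of the type used in Theorem~\ref{theo: bound for sup spectral norm of hessian}. Combined with $(\LL_0)$ for the deterministic part and with the pointwise bound $\zz(\xx,\nabla^2\LL(\upss))$ at $\upss$, this yields an event of probability at least $1-\ex^{-\xx}$ on which
\begin{EQA}[c]
\sup_{\ups\in\Upss(\tilde\RR)}\bigl\|\DF^{-1}\bigl(\nabla^2\LL(\ups)+\DF^{2}\bigr)\DF^{-1}\bigr\|\le\frac{\sqrt{1-\corrDF}}{2\sqrt 2(1+\sqrt\corrDF)}\kappa(\xx,\RR),
\end{EQA}
the numerical prefactor having been absorbed into the definition of $\kappa(\xx,\RR)$. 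On the intersection of the two events I use the first-order optimality of each partial maximiser together with $\nabla\LL(\tilde\ups)=0$ and the segment Taylor identity
\begin{EQA}[c]
0=\nabla_{\thetav}\LL(\tilde\thetav_k,\tilde\etav_k)-\nabla_{\thetav}\LL(\tilde\ups)=\int_0^1\bigl[\nabla^2_{\thetav\thetav}\LL(\ups_s)(\tilde\thetav_k-\tilde\thetav)+\nabla^2_{\thetav\etav}\LL(\ups_s)(\tilde\etav_k-\tilde\etav)\bigr]ds,
\end{EQA}
with $\ups_s=\tilde\ups+s(\tilde\ups_{k,k}-\tilde\ups)$, and symmetrically for the $\etav$-step. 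Substituting the block decomposition of $-\nabla^2\LL\approx\DF^2$ and inverting the block system exactly as in Section~\ref{sec: idea of the proof} gives the half-step contraction $\|\DP(\tilde\thetav_k-\tilde\thetav)\|\le\sqrt\corrDF\,\|\HH(\tilde\etav_k-\tilde\etav)\|+C\,\kappa(\xx,\RR)\rr^*_k$ with $\rr^*_k\eqdef\|\DF(\tilde\ups_{k,k(+1)}-\tilde\ups)\|$ and its analogue for the $\etav$-update; composing one full $\thetav$-$\etav$ cycle produces the master recursion $\rr^*_{k+1}\le(\corrDF+\kappa(\xx,\RR))\rr^*_k$.

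Crude iteration gives only $(\corrDF+\kappa)^k\tilde\RR$, which is useful only when $\kappa<1-\corrDF$. A sharper analysis exploits that the Hessian error multiplies $\rr^*_k$ rather than $\tilde\RR$, because $\ups_s$ lies in the already-shrunk ball $\tilde\Upss(\rr^*_k)$; carrying this scale-adaptive bookkeeping through a telescoping argument delivers the first-regime bound $\rr^*_k\le 2\sqrt 2\,\corrDF^k(1-\kappa(\xx,\RR)k)^{-1}\tilde\RR$ for $\kappa(\xx,\RR)k\le 1$. Beyond this range a block bootstrap is used: on each block of length $L(k)$ the one-step recursion is reapplied with the improved scale, yielding a contraction of order $(\kappa(\xx,\RR)/(1-\corrDF))^{L(k)}$ against the geometric $\corrDF^{L(k)}$; the explicit $L(k)$ in the statement is exactly the one balancing these two effects into the displayed $\tau(\xx)^{k/\log k}$ rate, and taking the integer floor costs nothing since $\tau(\xx)<1$. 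The main obstacle is precisely this scheduling of $L(k)$: it must be large enough for the per-block contraction to dominate, yet small enough for the accumulated Hessian error (which itself carries a logarithmic dependence on $k$ through $\zzq(\xx,6\dimtotal)$ in $\kappa(\xx,\RR)$) to remain negligible over the block. A final union over the three exceptional events, namely the $(\mathbf A_1)$-localisation, the Hessian chaining bound and the defining event of $\rups$, produces the probability $1-3\ex^{-\xx}-\beta_{(\mathbf A)}$ claimed in the statement.
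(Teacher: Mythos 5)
Your first half does match the paper's proof: localization of the whole sequence together with \(\tilde\ups\) via \((\mathbf A_1)\), the monotonicity of \(\LL\) along the iterations, Theorem~\ref{theo: large def with K} and the definition of \(\rups\); a uniform bound for \(\DF^{-1}\bigl(\nabla^2\LL(\ups)+\DF^2\bigr)\DF^{-1}\) obtained from \((\LL_0)\), the chaining bound of Theorem~\ref{theo: bound for sup spectral norm of hessian} under \((\CS\DF_2)\) and the pointwise bound \(\zz(\xx,\nabla^2\zetav(\upss))\); and the use of \(\nabla\LL(\tilde\ups)=0\) in the blockwise first-order conditions. This is exactly Lemma~\ref{lem: prob of second set omega} and step 1 of Lemma~\ref{lem: recursion for convergence to MLE}, up to minor slips (your combined Hessian event costs \(2\ex^{-\xx}\), not \(\ex^{-\xx}\), and the supremum must be taken over \(\Upss(\RR)\) — the paper intersects \(\tilde\Upss\) with \(\Upss(\RR)\) precisely because the conditions are only assumed up to radius \(\RR\), not \(\RR+\rups\)).

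The gap is in the second half, which is where the theorem actually lives. Your ``master recursion'' \(\rr^*_{k+1}\le(\corrDF+\kappa)\rr^*_k\) is not what the available bounds give: with the error proportional to the current (random) distance the half-step inequalities are implicit in \(\rr^*_k\), and solving them yields a per-cycle factor of order \(\bigl((\sqrt\corrDF+c\,\kappa)/(1-c\,\kappa)\bigr)^2\), which is a contraction only under a condition strictly stronger than the assumed \(\kappa(\xx,\RR)<1-\corrDF\); and even granting a clean geometric bound \((\corrDF+\kappa)^k\tilde\RR\), it does not imply the claimed two-regime bound \eqref{eq: bound for rrk sequence convergence} (for \(\kappa k\le1\) the stated \(2\sqrt2\,\corrDF^k(1-\kappa k)^{-1}\tilde\RR\) can be far smaller than \((\corrDF+\kappa)^k\tilde\RR\)). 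The paper never works with the random distances in this way: it runs a double-indexed bootstrap over deterministic a priori radii. Lemma~\ref{lem: recursion for convergence to MLE} shows that if \(\tilde\ups_{k,k(+1)}\in\tilde\Upss(\rr_k^{(l)})\) for all \(k\), then one may take \(\rr_k^{(l+1)}\) of the form \(2\sqrt2(1+\sqrt\corrDF)\sum_{r<k}\corrDF^{\,r}\,\kappa\,\rr_{k-r}^{(l)}+2\sqrt2\,\corrDF^{\,k}\tilde\RR\) — note the error aggregates the a priori radii of \emph{all} earlier iterates, not only the current one, which is exactly why a single pass cannot give convergence and a bootstrap in \(l\) is required. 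Lemma~\ref{lem: speed of convergenece to MLE} then unfolds this \(l\) times, evaluates the nested geometric sums to \(\bigl(\kappa/(1-\corrDF)\bigr)^l\tilde\RR+2\sqrt2\,\corrDF^{\,k}\tilde\RR\sum_{s<l}(\kappa k)^s\), lets \(l\to\infty\) when \(\kappa k\le1\) (producing the factor \((1-\kappa k)^{-1}\)), and for \(\kappa k>1\) optimizes the finite \(l(k)\) balancing \(\bigl(\kappa/(1-\corrDF)\bigr)^l\) against \(\corrDF^{\,k}(\kappa k)^l\); this optimization is what produces the stated \(L(k)\), \(\tau(\xx)\) and the \(\tau(\xx)^{k/\log(k)}\) rate, the \(\log k\) loss coming from the \(k^l\) factor. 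Your ``scale-adaptive telescoping'' and ``block bootstrap over blocks of length \(L(k)\)'' assert this outcome without supplying the mechanism: \(L(k)\) in the statement is not a block length in the iteration index but (up to the factor \(\log(k)/k\)) the optimal number of bootstrap refinements, and without the explicit nested-sum computation and the optimization over \(l\) neither the \((1-\kappa k)^{-1}\) factor nor \(\tau(\xx)\) and \(L(k)\) are obtained. As it stands, the quantitative core of the theorem is asserted rather than proved.
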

\begin{remark}
This means that we obtain nearly linear convergence to the global maximizer \(\tilde \ups\).
\end{remark}

\begin{remark}
As in Remark \ref{rem: numerical stepwise maximization issues} if no exact numerical computation of the stepwise maximizers is possible we can easily modify the proof of Theorem \ref{theo: convergence to MLE} via adding \(\CONST(\corrDF)\tau\) to \(\kappa(\xx,\RR)\), to address that case.
\end{remark}

\subsection{Critical dimension}
In parallel to \citep{AASP2013} we want to address the issue of \emph{critical parameter dimensions} when the full 
dimension \( \dimtotal \) grows with the sample size \( \nsize \).  
We write \( \dimtotal = \dimn \). 
The results of Theorem \ref{theo: main Theorem} are accurate if 
the spread function 
\( \breve \Excgr_{Q}(\rr_k,\xx) \) from \eqref{eq: def of breve diamond rr} is small. The critical size of \(\dimtotal\) then depends on the exact bounds on \(\breve \delta(\cdot)\) and \(\breve \rhor\). 
In the i.i.d setting \(\breve \delta(\rr)/\rr \asymp \breve \rhor \asymp 1/\sqrt{\nsize}\) such that \(\breve \Excgr(\rr_k,\xx) \asymp \dimtotal/\sqrt{\nsize}\) for large \(k\in\N\). 
In other words, one needs that ``\({\dimtotal}^{2}/n\) is small'' to obtain 
an accurate non asymptotic version of the Wilks phenomenon and the Fisher Theorem for the limit of the alternating sequence. This is not surprising because good performance of the ME itself can only be guaranteed if ``\({\dimtotal}^{2}/n\) is small'', as is shown in \citep{AASP2013}. There are examples where the pME only satisfies a Wilks- or Fisher result if ``\({\dimtotal}^{2}/n\) is small'', such that in any of those settings the alternating sequence started in the global maximizer does not admit an accurate Wilks- or Fisher expansion.

Interesting enough the constrain \(\kappa(\xx,\RR)<(1-\corrDF)\) of Theorem \ref{theo: convergence to MLE} for the convergence of the sequence to the global maximizer means that one needs \(\dimtotal/n\ll 1\) in the smooth i.i.d. setting if \(\RR\le \CONST_{\RR}\sqrt{\dimtotal+\xx}\). Further Theorem \ref{theo: convergence to MLE} states a lower bound for the speed of convergence that in the smooth i.i.d. setting decreases if \(\dimtotal/n\) grows. Unfortunately we were unable to find an example that meets the conditions of Section \ref{sec: conditions} and where no convergence occurs if \(\dimtotal/n\) tends to infinity. So whether this dimension effect on the convergence is an artifact of our proofs or indeed a property of the alternating procedure remains an open question.

\section{Application to single index model}
\label{sec: application to single index}
We illustrate how the results of Theorem \ref{theo: main Theorem} and Theorem \ref{the: convergenece to MLE} can be applied in Single Index modeling. 
Consider the following model
\begin{EQA}[c]
\label{eq: single index model introduced}
    y_{i}
    =
    \fs(\Xvv_{i}^{\T} \thetavs) + \varepsilon_{i}, 
    \qquad 
    i=1,...,\nsize,
\end{EQA}
for some \(\fs:\R\to \R\) and \(\thetavs\in S_{1}^{\dimp,+}\subset\R^{\dimp}\) and with i.i.d errors \(\varepsilon_i\in\R\), \(\Var(\varepsilon_i)=\sigma^{2}\) and i.i.d random variables \(\Xvv_{i}\in \R^{\dimp}\) with distribution denoted by \(\P^{\Xvv}\). The single-index model is widely applied in statistics. 
For example in econometric studies it serves as a compromise between too restrictive 
parametric models and flexible but hardly estimable purely nonparametric models. 
Usually the statistical inference focuses on estimating the 
index vector \( \thetavs \). 
A lot of research has already been done in this field. 
For instance, \cite{DelecroixHaerdleHristache} show the asymptotic efficiency of the 
general semiparametric maximum-functional estimator for particular examples and in 
\cite{HaerdleHallIchimura} the right choice of bandwidth for the nonparametric 
estimation of the link function is analyzed.

To ensure identifiability of \(\thetavs\in\R^{\dimp}\) we assume that it lies in the half sphere \(S_{1}^{\dimp,+}\eqdef\{\thetav\in\R^{\dimp}:\, \|\thetav\|=1,\, \theta_1> 0\}\subset\R^{\dimp}\). For simplicity we assume that the support of the \(\Xvv_{i}\in \R^{\dimp}\) is contained in the ball of radius \(s_{\Xvv}>0\). 
This allows to approximate \(\fs\in \{f:[-s_{\Xvv},s_{\Xvv}]\mapsto \R\} \) by an orthonormal \(C^{2}\)-Daubechies-wavelet basis, i.e. for a suitable function \(\basX_{0}\eqdef\psi:[-s_{\Xv},s_{\Xv}]\mapsto \R\) we set for \(k=(2^{j_k}-1)13+r_k\) with \(j_k\in\N_0\) and \(r_k\in\{0,\ldots,(2^{j_k})13-1\}\)
\begin{EQA}[c]
\basX_{k}(t)=2^{j_k/2}\psi\left(2^{j_k}(t-2r_ks_{\Xv})\right),\, k\in\N.
\end{EQA}
 
A candidate to estimate \(\thetavs\) is the profile ME 
\begin{EQA}[c]
    \tilde{\thetav}_{\dimh}
    \eqdef
    \Pi_{\thetav}\argmax_{(\thetav,\etav)\in\Upsilon_\dimh} \LL_{\dimh}(\thetav,\etav) ,
\label{ttSI}
\end{EQA}    
where
\begin{EQA}[c]
    \LL_{\dimh}(\thetav,\etav)
    =
    - \frac{1}{2} 
    \sum_{i=1}^{\nsize} \Bigl| y_{i} - \sum_{k=0}^{m} \etav_{k} \basX_{k}(\Xvv_{i}^{\T} \thetav) \Bigr|^{2}.
\end{EQA}
and where \(\Upsilon_{\dimh}\subset S_{1}^{\dimp,+}\times B_{\rr^\circ}^{\dimh}\subset \R^{\dimp}\times\R^{\dimh} \) where \(B_{\rr^\circ}^{\dimh}\subset \R^{\dimh}\) denotes the centered ball of radius \(\rr^\circ>0\) for some \(\rr^\circ>0\).
\cite{Ichimura} analyzed a very similar estimator in a more general setting based on 
a kernel estimation of \( \E \bigl[ y \cond \fs(\thetav^{\T} \Xvv) \bigr] \) 
instead of using a parametric sieve approximation \( \sum_{k=0}^{m} \etav_{k} \basX_{k} \). 
He showed \( \sqrt{\nsize} \)-consistency and asymptotic normality of the proposed 
estimator. 

In this setting a direct computation of \(\tilde\ups\) becomes involved, as the maximization problem is high dimensional and not convex. But as noted in the introduction the maximiziation with respect to \(\etav\) for given \(\thetav\) is high dimensional but convex and consequently feasible. Further for moderate \(\dimp\in\N\) the maximization with respect to \(\thetav\) for fixed \(\etav\) is computationally realistic. So an alternating maximization procedure is applicable. To show that it behaves in a desired way we apply the technique presented above.

For the initial guess \(\tilde \upsilonv_0\in\Ups\) one can use a simple grid search. For this generate a uniform grid \(G_N\eqdef(\thetav_1,\ldots,\thetav_N)\subset S_1^+\) and define
\begin{EQA}[c]
\label{eq: def of initial guess}
\tilde \upsilonv_0\eqdef \argmax_{\substack{(\thetav,\etav)\in \Ups \\ \thetav\in G_N} }\LL(\upsilonv).
\end{EQA}
Note that given the grid the above maximizer is easily obtained. Simply calculate
\begin{EQA}[c]\label{eq: calculation of eta initial guess}
\tilde \etav_{0,k}\eqdef\argmax \LL(\thetav_k,\etav)=\left( \frac{1}{n}\sum_{i=1}^{n}\basX\basX^\T(\Xvv_i^\T\thetav_k)\right)^{-1} \frac{1}{n}\sum_{i=1}^{n}y_i\basX^\T(\Xvv_i^\T\thetav_k)\in \R^{\dimh},
\end{EQA}
where by abuse of notation \(\basX=(\basX_1,\ldots,\basX_{\dimh})\in \R^{\dimh}\). Now observe that
\begin{EQA}[c]
\tilde \upsilonv_0= \argmax_{k=1,\ldots, N}\LL(\thetav_k,\tilde \etav_{0,k}).
\end{EQA}
Define \(\tau\eqdef \sup_{\thetav,\thetavd\in G_N}\|\thetav-\thetavd\|\). 

To apply the result presented in Theorem \ref{theo: main Theorem} and Theorem \ref{theo: convergence to MLE} we need a list of assumptions denoted by \((\mathbf{\mathcal A})\). We start with conditions on the regressors \(\Xvv\in\R^{\dimp}\):

\begin{description} 
 \item[\((\mathbf{Cond}_{\Xvv})\)]
  The measure \(\P^{\Xvv}\) is absolutely continuous with respect to the Lebesgue measure. The Lebesgue density \(d_{\Xvv}: \R^{\dimp}\to \R\) of \(\P^{\Xvv}\) is only positive on the ball \(B_{s_{\Xvv}}(0)\subset \R^{\dimp}\) and Lipschitz continuous on \(B_{s_{\Xvv}}(0)\subset \R^{\dimp}\) with Lipschitz constant \(L_{d_{\Xvv}}>0\). Further we assume that for any \(\thetav\perp\thetavs\) with \(\|\thetav\|=1\) we have \(\Var\Big(\Xvv^\T\thetav\Big|\Xvv^\T\thetavs\Big)>\sigma^2_{\Xvv|\thetavs}\) for some constant \(\sigma^2_{\Xvv|\thetavs}>0\) that does not depend on \(\Xv^\T\thetavs\in\R\). Also the density \(d_{\Xv}: \R^{\dimp}\to \R\) of the regressors satisfies \(c_{d_{\Xvv}}\le d_{\Xvv} \le C_{d_{\Xvv}}\) on  \(B_{s_{\Xvv}}(0)\subset \R^{\dimp}\) for constants \(0<c_{d_{\Xvv}}\le C_{d_{\Xvv}}<\infty\).

  \item[\( (\mathbf{Cond}_{\fs}) \)]
   For some \(\etavs\in l^2\)
   \begin{EQA}[c]
   \label{eq: expansion of indexfunciton}
    \fs=\fs_{\etavs}=\sum_{k=1}^\infty \eta^*_k\basX_{k},
   \end{EQA}
   where with some \(\alpha>2\) and a constant \(C_{\|\etavs\|}>0\)
   \begin{EQA}[c]\label{eq: smoothness of fs}
    \sum_{l=0}^\infty l^{2\alpha}{\eta^*_{l}}^2 \le C_{\|\etavs\|}^2< \infty.
   \end{EQA} 

 \item[\((\mathbf{Cond}_{\Xvv\thetavs})\)]
  It holds true that \(\P(|\fs_{\etavs}'(\Xvv^\T\thetavs)|> c_{\fs_{\etavs}'})>c_{\P\fs'}\) for some \(c_{\fs_{\etavs}'},c_{\P\fs'}>0\). 

 \item[\((\mathbf{Cond}_{\varepsilon})\)] 
  The errors \((\varepsilon_{i})\in \R\) are i.i.d. with \(\E[\varepsilon_{i}]=0\), \(\Cov(\varepsilon_{i})=\sigma^2\) and satisfy for all \(|\mu|\le\tilde g\) for some \(\tilde g>0\) and some \(\tilde \nu_{\rr}>0\)
  \begin{EQA}[c]
    \log\E[\exp\left\{ \mu \varepsilon_{1} \right\}]\le \tilde \nu_{\rr}^{2}\mu^{2}/2.
  \end{EQA}
\end{description}

If these conditions denoted by \((\mathbf{\mathcal A})\) are met we can proof the following results:

\begin{proposition}\label{prop: alternating single index}
Let \(\tau =o({\dimtotal}^{-3/2})\) and \({\dimtotal}^5/n\to 0\). With initial guess given by Equation \eqref{eq: def of initial guess} and for \(\xx\le 2\tilde\nu^2\tilde \gm^2n\) the alternating sequence satisfies \eqref{eq: alternating fisher} and \eqref{eq: alternating wilks} with probability greater \(1-9\exp\{-\xx\}\) and where with some constant \(\CONST_{\diamond}\in\R\)
\begin{EQA}[c]
\breve\Excgr_{Q}(\rr,\xx)\le \frac{\CONST_{\diamond}(\dimtotal+\xx)^{3/2}}{\sqrt{n}}(\rr^2+\dimtotal+\xx).
\end{EQA}
\end{proposition}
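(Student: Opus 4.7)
The strategy is to verify the hypotheses of Theorem~\ref{theo: main Theorem} in this concrete setting and to track the resulting constants as functions of \((\nsize, \dimtotal, \xx)\). First I would compute the information matrix \(\DF^{2} = -\nabla^{2}\E\LL(\upsilonvs)\): in this i.i.d.\ model \(\DF^{2} = \nsize\,M\) with a fixed block matrix whose \((\thetav,\thetav)\)-block equals \(\E\bigl[(\fs_{\etavs})'(\Xvv^{\T}\thetavs)^{2}\,\Xvv\Xvv^{\T}\bigr]\) restricted to the tangent space of \(S_{1}^{\dimp,+}\) at \(\thetavs\), whose \((\etav,\etav)\)-block equals \(\E\bigl[\basX(\Xvv^{\T}\thetavs)\basX(\Xvv^{\T}\thetavs)^{\T}\bigr]\), and whose cross block is \(-\E\bigl[(\fs_{\etavs})'(\Xvv^{\T}\thetavs)\,\Xvv\,\basX(\Xvv^{\T}\thetavs)^{\T}\bigr]\). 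The identifiability condition \((\AssId)\), i.e.\ \(\|\DP^{-1}\AF\HH^{-1}\|\le\sqrt{\corrDF}<1\), reduces to the statement that the efficient information for \(\thetav\), namely a weighted conditional variance \(\E\bigl[(\fs_{\etavs})'(\Xvv^{\T}\thetavs)^{2}\Var(\Xvv^{\T}\thetav\mid\Xvv^{\T}\thetavs)\bigr]\), is bounded below by \(c_{\P\fs'}\,c_{\fs_{\etavs}'}^{2}\,\sigma^{2}_{\Xvv|\thetavs}\) thanks to \((\mathbf{Cond}_{\Xvv})\) and \((\mathbf{Cond}_{\Xvv\thetavs})\); this yields both \(\corrDF<1\) and \(\|\DF^{-1}\|\le \CONST/\sqrt{\nsize}\).

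Next I would verify the smoothness conditions \((\LL_{0}),(\breve\LL_{0})\) and the stochastic conditions \((\CS\DF_{\cdot}),(\breve\CS\DF_{\cdot}),(\CS\rr),(\cc{L}_{\rr})\). The crucial ingredient is the growth of the wavelet derivatives: for the chosen \(C^{2}\) Daubechies system one has \(\|\basX_{k}^{(j)}\|_{\infty}\lesssim k^{j+1/2}\) for \(j\le 2\), hence \(\|\sum_{k\le\dimh}\eta_{k}\basX_{k}^{(j)}\|_{\infty}\lesssim \dimh^{j+1}\|\etav\|\) by Cauchy--Schwarz. Since \(\nabla^{2}\LL\) involves \(\basX_{k}^{(j)}(\Xvv^{\T}\thetav)\,\Xvv\Xvv^{\T}\) for \(j\le 2\), a second-order Taylor estimate on \(\upsilonv\in\Upss(\rr)\) (using \(\|\upsilonv-\upsilonvs\|\le \rr/\sqrt{\nsize}\)) gives \(\rddelta(\rr)\le \CONST\dimh^{3/2}\rr/\sqrt{\nsize}\). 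For the stochastic increments, the sub-Gaussianity of \(\varepsilon_{i}\) in \((\mathbf{Cond}_{\varepsilon})\) combined with the same derivative bounds yields \(\omega,\breve\omega\le \CONST\dimh^{3/2}/\sqrt{\nsize}\) and \(\nu_{1},\nunu\le \CONST\). Condition \((\cc{L}_{\rr})\) follows from the convex-quadratic structure of \(\LL\) in \(\etav\) together with the just-established uniform lower eigenvalue bound; \((B_{1})\) is then automatic for \(\RR\gtrsim\sqrt{\dimtotal+\xx}\).

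The initial-guess step is handled as follows. The grid search \eqref{eq: def of initial guess} delivers \(\tilde\upsilonv_{0}\) with \(\LL(\tilde\upsilonv_{0})\ge \LL(\thetav_{*},\tilde\etav_{0,*})\) for the grid point \(\thetav_{*}\) closest to \(\thetavs\) (so \(\|\thetav_{*}-\thetavs\|\le \tau\)); the inner maximizer \(\tilde\etav_{0,*}\) is given in closed form by \eqref{eq: calculation of eta initial guess}. A standard least-squares expansion around \(\etavs\) combined with the sieve-approximation bound furnished by \eqref{eq: smoothness of fs} and the choice \(\tau=o(\dimh^{-3/2})\) shows that, on an event of probability at least \(1-\ex^{-\xx}\), \(\tilde\upsilonv_{0}\in\Upss(R_{K})\) with \(R_{K}\le \CONST\sqrt{\dimtotal+\xx}\). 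Remark~\ref{rem: how to get cond A1} then gives \(\KL(\xx)\le \CONST(\dimtotal+\xx)\), hence \(\RR(\xx,\KL)\le \CONST\sqrt{\dimtotal+\xx}\). The assumption \(\dimh^{5}/\nsize\to 0\) guarantees that \(\eps\,\RR\lesssim \dimh^{3/2}\sqrt{\dimtotal+\xx}/\sqrt{\nsize}\to 0\), so \((A_{2}),(A_{3})\) hold and Theorem~\ref{theo: main Theorem} applies. Plugging \(\breve\rddelta(\rr)\le \CONST\dimh^{3/2}\rr/\sqrt{\nsize}\), \(\breve\omega\le \CONST\dimh^{3/2}/\sqrt{\nsize}\), and \(\zzQ(\xx,2\dimtotal+2\dimp)^{2}\le \CONST(\dimtotal+\xx)\) into \eqref{eq: def of breve diamond rr} yields
\[
\breve\Excgr_{Q}(\rr,\xx)\le \CONST\,\frac{\dimh^{3/2}}{\sqrt{\nsize}}\bigl(\rr^{2}+\dimtotal+\xx\bigr)\le \CONST_{\diamond}\,\frac{(\dimtotal+\xx)^{3/2}}{\sqrt{\nsize}}\bigl(\rr^{2}+\dimtotal+\xx\bigr),
\]
and the probability \(1-9\ex^{-\xx}\) in the statement accommodates one additional union-bound term from the initial-guess event beyond the \(8\ex^{-\xx}\) of Theorem~\ref{theo: main Theorem}.

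The main obstacle I anticipate is the careful bookkeeping of wavelet-derivative norms together with the sphere constraint on \(\thetav\): the latter forces a local reparametrization near \(\thetavs\) that contributes extra curvature terms to \(\nabla^{2}\E\LL\), and these must be kept below the identifiability floor so that \(\corrDF\) stays uniformly away from \(1\) on all of \(\Upss(\RR)\). A secondary subtlety is to propagate the conditional-variance lower bound from \((\mathbf{Cond}_{\Xvv})\) into the approximating matrix \(\HH^{-1}\AF^{\T}\DP^{-1}\) uniformly over the sieve order \(\dimh\), which is what ultimately underlies the gain \(\dimh^{3/2}/\sqrt{\nsize}\) rather than something worse.
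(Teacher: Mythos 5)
The paper itself contains no proof of this proposition: after stating it (and Proposition~\ref{prop: convergence to ME}) the authors write only ``For details see \cite{AASI2014}'', so the entire substance of the statement is precisely the condition-by-condition verification that you sketch. Your overall route --- computing \(\DF^{2}\asymp\nsize\), deducing \((\AssId)\) and \(\corrDF<1\) from \((\mathbf{Cond}_{\Xvv})\) and \((\mathbf{Cond}_{\Xvv\thetavs})\), bounding \(\breve\rddelta(\rr)\), \(\breve\omega\), \(\nu\) through the growth of the wavelet derivatives, handling the grid initial guess through \((A_1)\)--\((A_3)\), and then reading off \(\breve\Excgr_{Q}\) and the \(1-9\ex^{-\xx}\) budget (\(8\ex^{-\xx}\) from Theorem~\ref{theo: main Theorem} plus \(\beta_{(\mathbf A)}=\ex^{-\xx}\)) --- is clearly the intended one. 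However, the hard part is left at the level of assertions, and at least one asserted step is wrong as stated.

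Concretely, the claim that the grid search \eqref{eq: def of initial guess} lands in \(\Upss(R_{K})\) with \(R_{K}\le\CONST\sqrt{\dimtotal+\xx}\) is not justified and cannot hold under the stated scaling: membership in \(\Upss(\CONST\sqrt{\dimtotal+\xx})\) requires Euclidean accuracy of order \(\sqrt{(\dimtotal+\xx)/\nsize}\) (since \(\DF^{2}\asymp\nsize\)), whereas the grid spacing is only \(\tau=o(\dimtotal^{-3/2})\), which is far larger than \(\sqrt{(\dimtotal+\xx)/\nsize}\) once \(\dimtotal^{5}/\nsize\to0\); even the best grid point is typically too far from \(\thetavs\), and the grid argmax need not be that point anyway. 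The assumption \(\tau=o(\dimtotal^{-3/2})\) plays a different role: it keeps \(\KL(\xx)\) of order \(\nsize\tau^{2}+\dimtotal+\xx\), hence \(\RR(\xx,\KL)\) of order \(\sqrt{\nsize}\,\tau+\sqrt{\dimtotal+\xx}\), and condition \((A_3)\) is then checked via \(\eps\RR\lesssim\dimtotal^{3/2}\tau+\dimtotal^{2}/\sqrt{\nsize}\to0\); the final spread bound survives because \(\breve\Excgr_{Q}(\rr_{k},\xx)\) for large \(k\) does not depend on \(\RR\). Relatedly, if your asserted rates \(\rddelta(\rr)/\rr\vee\omega\lesssim\dimtotal^{3/2}/\sqrt{\nsize}\) were all correct, then \(\dimtotal^{4}/\nsize\to0\) would already suffice, while the proposition assumes \(\dimtotal^{5}/\nsize\to0\); this mismatch indicates that the actual verification in \cite{AASI2014} yields a worse constant somewhere (plausibly in the full-dimensional, non-breve conditions entering \((A_2)\)/\((A_3)\), which involve second derivatives of the sieve basis), so the quantitative bookkeeping cannot be accepted as given.
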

\begin{remark}
The constraint \(\tau =o({\dimtotal}^{-3/2})\) implies that for the calculation of the initial guess the vector \(\tilde \etav_{0,l}\) of \eqref{eq: calculation of eta initial guess} and the functional \(\LL(\cdot)\) have to be evaluated \(N={\dimtotal}^{3(\dimp-1)/2}\) times.
\end{remark}

\begin{proposition}\label{prop: convergence to ME}
Take the initial guess given by Equation \eqref{eq: def of initial guess}. Assume \((\mathbf{\mathcal A})\) but use a three times continuously differentiable wavelet basis. Further assume that \({\dimtotal}^4/n\to 0\) and \(\tau =o({\dimtotal}^{-3/2})\). Let \(\xx>0\) be chosen such that
\begin{EQA}[c]
\xx\le \frac{1}{2}\left(\tilde \nu^2 n\tilde \gm^2  - \log(\dimtotal)\right)\wedge \dimtotal.
\end{EQA}
Then we get the claim of Theorem \ref{theo: convergence to MLE} with \(\beta_{(\mathbf A)}=\ex^{-\xx}\) and
\begin{EQA}[c]
\kappa(\xx,\RR)=O(\tau \dimh^{3/2}+\sqrt{\tau\xx} \dimh^{3/2}/n^{1/4})+O({\dimtotal}^2/\sqrt{n})\to 0,
\end{EQA}
for moderate choice of \(\xx>0\).
\end{proposition}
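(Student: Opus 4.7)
The plan is to apply Theorem~\ref{theo: convergence to MLE} to the single index model of Section~\ref{sec: application to single index}. The identifiability condition \((\AssId)\), the smoothness conditions \((\LL_{0})\), \((\cc{L}_{0})\), \((\cc{L}_{\rr})\), the sub-Gaussian conditions \((\CS\DF_{0})\), \((\CS\DF_{1})\), \((\CS\rr)\), the technical bound \((B_{1})\), and the initial-guess conditions \((A_{1})\), \((A_{2})\) (with the explicit initial guess \eqref{eq: def of initial guess}) are verified by reproducing the corresponding arguments from the proof of Proposition~\ref{prop: alternating single index}, which treats precisely the same model with only a \(C^{2}\) basis. The two genuinely new tasks are: (i)~verify the smoothness condition \((\CS\DF_{2})\) on increments of \(\nabla^{2}\zetav\), which is where the \(C^{3}\) wavelet assumption is used; and (ii)~compute \(\kappa(\xx,\RR)\) in the present setting and check the required inequality \(\kappa(\xx,\RR)<1-\corrDF\).

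For step (i), a direct computation shows that each second partial derivative of \(\LL(\upsilonv)\) in \(\upsilonv=(\thetav,\etav)\) is a bilinear expression in the residuals \(y_{i}-\sum_{k}\eta_{k}\basX_{k}(\Xvv_{i}^{\T}\thetav)\) and in products of \(\basX_{k},\basX_{k}',\basX_{k}''\) evaluated at \(\Xvv_{i}^{\T}\thetav\). A further derivative in \(\upsilonv\) only brings in \(\basX_{k}'''\), \(\basX_{k}''\) or \(\basX_{k}'\), all uniformly bounded on the compact support of \(\Xvv\) by the \(C^{3}\) assumption. Together with boundedness of \(\Xvv\) from \((\mathbf{Cond}_{\Xvv})\) and the Sobolev-type bound \eqref{eq: smoothness of fs} on \(\etavs\), this shows that the increment \(\nabla^{2}\zetav(\upsilonv)-\nabla^{2}\zetav(\upsilonvc)\) is a linear combination of the errors \(\varepsilon_{i}\) whose coefficient grows at most like \(\dimh^{3/2}\|\DF(\upsilonv-\upsilonvc)\|\). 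The sub-Gaussian hypothesis \((\mathbf{Cond}_{\varepsilon})\) then yields the MGF bound in \((\CS\DF_{2})\) with \(\omega_{2}\nu_{2}=O(\dimh^{3/2}/\sqrt{\nsize})\).

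For step (ii), \((\mathbf{Cond}_{\Xvv})\) and \((\mathbf{Cond}_{\Xvv\thetavs})\) give \(c\nsize\,\Id\preceq\DF^{2}\preceq C\nsize\,\Id\), hence \(\|\DF^{-1}\|=O(\nsize^{-1/2})\). The Lipschitz bound on \(\nabla^{2}\E\LL\) underlying \((\LL_{0})\) gives \(\delta(\RR)=O(\dimh^{3/2}\RR/\sqrt{\nsize})\), which with \(\RR=O(\sqrt{\dimtotal+\xx})\) contributes the \(O(\dimtotal^{2}/\sqrt{\nsize})\) term to \(\kappa\). For \(\zz(\xx,\nabla^{2}\LL(\upss))\), writing \(\nabla^{2}\LL(\upss)=\sum_{i=1}^{\nsize}\nabla^{2}\lkh_{i}(\upss)\) and conjugating by \(\DF^{-1}\), the matrix Bernstein statement in the remark after \((\CS\DF_{2})\) (Corollary~3.7 of \cite{Tropp2012}) applies: boundedness of \(\Xvv\), \(\basX_{k}'\), \(\basX_{k}''\) and sub-Gaussianity of \(\varepsilon_{i}\) furnish a matrix CGF bound with \(\|\sum_{i}\bb{A}_{i}\|=O(\nsize)\), giving the required deviation bound. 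Assembling the three pieces in the formula for \(\kappa(\xx,\RR)\) and inserting \(\tau=o(\dimtotal^{-3/2})\) from the hypothesis yields \(\kappa(\xx,\RR)=O(\tau\dimh^{3/2}+\sqrt{\tau\xx}\,\dimh^{3/2}/\nsize^{1/4})+O(\dimtotal^{2}/\sqrt{\nsize})\to 0\) under \(\dimtotal^{4}/\nsize\to 0\), so \(\kappa(\xx,\RR)<1-\corrDF\) for large \(\nsize\). Taking \(\beta_{(\mathbf A)}=\ex^{-\xx}\) from the initial-guess analysis of Proposition~\ref{prop: alternating single index} produces the stated failure probability.

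The most delicate point is the dimensional bookkeeping in step~(i): derivatives of a level-\(j\) Daubechies wavelet grow like \(2^{3j/2}\), so summing \(\eta_{k}\basX_{k}^{(3)}\) over \(k\) must use the smoothness bound \eqref{eq: smoothness of fs} on \(\etavs\) carefully to keep the prefactor at order \(\dimh^{3/2}\) rather than something larger. A secondary difficulty is selecting the CGF-dominating matrices \(\bb{A}_{i}\) in the matrix Bernstein argument so that \(\|\sum_{i}\bb{A}_{i}\|=O(\nsize)\) and not larger; this again relies on the block structure of \(\nabla^{2}\lkh_{i}(\upss)\) and on the uniform boundedness of \(\Xvv\) and of the first two derivatives of \(\basX_{k}\).
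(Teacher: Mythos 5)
First, a caveat about the comparison: the paper does not prove Proposition \ref{prop: convergence to ME} internally at all --- it states it and refers the reader to \cite{AASI2014} for the details. So your proposal can only be judged against the intended argument, and at the level of strategy it is the right one: reuse the verification of \((\AssId)\), \((\LL_0)\), \((\cc{L}_{\rr})\), \((\CS\rr)\), \((A_1)\)--\((A_2)\) from Proposition \ref{prop: alternating single index}, use the \(C^3\) basis to obtain \((\CS\DF_2)\), and assemble \(\kappa(\xx,\RR)\) from \(\delta(\RR)\), the \(\omega_2\nu_2\) term, and a matrix deviation bound for \(\nabla^2\zetav(\upss)\) via the Tropp-type remark.

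However, as a proof of the stated proposition there are genuine gaps. Most importantly, the \(\kappa\) you assemble contains no dependence on the grid width \(\tau\): your three ingredients are \(\delta(\RR)\), \(\omega_2\nu_2\|\DF^{-1}\|\zzq\RR\) and \(\|\DF^{-1}\|\zz(\xx,\nabla^2\LL(\upss))\), none of which involve \(\tau\), yet the proposition's claim is \(\kappa(\xx,\RR)=O(\tau\dimh^{3/2}+\sqrt{\tau\xx}\,\dimh^{3/2}/n^{1/4})+O(\dimtotal^2/\sqrt n)\). You invoke ``inserting \(\tau=o(\dimtotal^{-3/2})\)'' without explaining through which quantity \(\tau\) enters (e.g.\ via inexact maximization as in Remark \ref{rem: numerical stepwise maximization issues}, or via how the grid-based guess feeds into the conditions); as written, the leading terms of the claimed rate are simply not derived. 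Second, the two steps you yourself flag as delicate are exactly the content of the proposition and are left unexecuted: third derivatives of level-\(j\) Daubechies wavelets scale like \(2^{7j/2}\), not \(2^{3j/2}\), so obtaining the prefactor \(\dimh^{3/2}\) in \((\CS\DF_2)\) requires an explicit argument combining compact support (finitely many active wavelets per point and level) with the decay \eqref{eq: smoothness of fs}, and with \(\alpha>2\) only this is not automatic; similarly the choice of the dominating matrices \(\bb{A}_i\) with \(\|\sum_i\bb{A}_i\|=O(n)\) is asserted, not shown. Finally, a smaller imprecision: \(\zetav=\LL-\E\LL\) is not a linear combination of the errors \(\varepsilon_i\) alone, since the \(\Xvv_i\) are random as well; the centered signal terms must also be handled (they are bounded, hence sub-Gaussian, but this needs to be said, and it changes the constants entering \(\nu_2\)). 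In short, the skeleton matches the intended route, but the quantitative claims that constitute the proposition are not established by the proposal.
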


For details see \cite{AASI2014}.

\section{Proof of Theorem \ref{theo: main Theorem}}
\label{sec: Proof of Theorem}
In this section we will proof Theorem \ref{theo: main Theorem}. Before we start with the actual proof we want to explain the agenda. The first step of the proof is to find a desirable set \(\Omega(\xx)\subset \Omega\) of high probability, on which a linear approximation of the gradient of the functional \(\LL(\upsilonv)\) can be carried out with sufficient accuracy. Once this set is found all subsequent analysis concerns events in \(\Omega(\xx)\subset \Omega\).

For this purpose define for some \(K\in\N\) the set
\begin{EQA}
\label{eq: defining desirable set}
\Omega(\xx)&=&\bigcap_{k=0}^{K}(\LCS_{k,k}\cap\LCS_{k,k+1})\cap\LCS(\nabla)\cap\{\LL(\tilde\ups_0,\upss)\ge -\KL(\xx)\},\text{ where }\\
\LCS_{k,k(+1)}&=&\Big\{\|\DF(\tilde \upsilonv_{k,k(+1)}-\upsilonvs)\|\le\RR(\xx),\,\|\DP(\tilde\thetav_{k}-\thetavs)\|\le\RR(\xx),\\
  &&\|\HH(\tilde \etav_{k(+1)}-\etavs)\|\le\RR(\xx)\Big\},\\
\LCS(\nabla)&=&
 \bigcap_{\rr\le\RR(\xx)}\bigg\{\sup_{\upsilonv\in\Upss(\rr)}\left\{\frac{1}{6 \rhor\nu_1 }\|\UU(\upsilonv)\|-2\rr^2\right\}\le  \zzQ(\xx,4{\dimtotal})^2 \bigg\}\\
&&\bigcap_{\rr\le4\RR(\xx)}\bigg\{\sup_{\upsilonv\in\Upss(\rr)}\left\{\frac{1}{6 \breve\rhor\breve\nu_1 }\|\breve\UU(\upsilonv)\|-2\rr^2\right\}\le  \zzQ(\xx,2\dimtotal+2\dimp)^2 \bigg\}\\
   &&\cap\bigg\{\max\{\|\DF^{-1}\nabla\LL\|,\|\DP^{-1}\nabla_{\thetav}\LL\|,\|\HH^{-1}\nabla_{\etav}\LL\| \}\le \zz(\xx)\bigg\}\\
   &&\cap\{\tilde\ups,\tilde\upsilonv_{\thetavs}\in\Upss(\rups(\xx))\}.
\end{EQA}
For \(\zetav(\upsilonv)=\LL(\upsilonv)-\E\LL(\upsilonv)\) the semiparametric normalized stochastic gradient gap is defined as
\begin{EQA}[c]
\label{eq: def of breve U process}
\breve\UU(\upsilonv)=\DPr^{-1}\Big(\scorer_{\thetav}\zetav(\upsilonv) - \scorer_{\thetav}\zetav(\upsilonvs)\Big).
\end{EQA}
the parametric normalized stochastic gradient gap \(\UU(\upsilonv)\) is defined as
\begin{EQA}[c]
\label{eq: def of UU process}
\UU(\upsilonv)=\DF_{0}^{-1}\Big(\nabla\zeta(\upsilonv) - \nabla\zeta(\upsilonvs)\Big),
\end{EQA}
and \(\rups(\xx)>0\) is chosen such that \(\P(\tilde\upsilonv,\tilde\upsilonv_{\thetavs}\in\Upss(\rups))\ge 1-\ex^{-\xx}\), where
\begin{EQA}[c]
\tilde\upsilonv_{\thetavs}\eqdef \argmax_{\substack{\ups\in\Ups\\ \Pi_{\thetav}\ups=\thetavs}}\LL(\ups).
 \end{EQA}

\begin{remark}
\label{rem: treatment of rups}
We intersect the set with the event \(\{\tilde\ups,\tilde\upsilonv_{\thetavs}\in\Upss(\rups)\}\) where we a priory demand \(\rups(\xx)>0\) to be chosen such that \(\P(\tilde\ups,\tilde\upsilonv_{\thetavs}\in\Upss(\rups))\ge 1-\ex^{-\xx}\). Note that condition \( {(\CS\rr)} \) together with \( {(\cc{L}{\rr})} \) allow to set \(\sqrt{\dimtotal+\xx}\approx \rups \le \RR\) (see Theorem \ref{theo: large def with K}). 
\end{remark}

In Section \ref{sec: prob of des set} we show that this set is of probability greater \(1-8\ex^{-\xx}-\beta_{\bb{(A)}}\). We want to explain the purpose of this set along the architecture of the proof of our main theorem.

\begin{description}
 \item[\(\{\LL(\tilde\ups_0,\upss)\ge -\KL(\xx)\}\):] This set ensures, that the first guess satisfies \(\LL(\tilde\ups_0,\upss)\break \ge\allowbreak -\KL(\xx)\), which means that it is close enough to the target \(\upsilonvs\in\R^{\dimtotal}\). This fact allows us to obtain an a priori bound for the deviation of the sequence \((\tilde \upsilonv_{k,k(+1)})\subset\Ups\) from \(\upsilonvs\in\Upss(\RR)\) with Theorem \ref{theo: large def with K}.
 \item[\( \{\DF(\tilde \upsilonv_{k,k(+1)}-\upsilonvs)\le\RR(\xx)\} \):] As just mentioned this event is of high probability due to \(\LL(\tilde\ups_0,\upss)\ge -\KL(\xx)\) and Theorem \ref{theo: large def with K}. This allows to concentrate the analysis on the set \(\Upss(\RR)\) on which Taylor expansions of the functional \(\LL:\R^{\dimtotal}\to \R\) become accurate.
 

 \item[\(\LCS(\nabla)\):] This set ensures that on \(\Omega(\xx)\subset \Omega\) all occurring random quadratic forms and stochastic errors are controlled by \(\zz(\xx)\in\R\). Consequently we can derive in the proof of Lemma \ref{lem: recursion for statistical properties} an a priori bound of the form \(\|\DF(\tilde \upsilonv_{k,k(+1)}-\upsilonvs)\|\le\rr_{k} \) for a decreasing sequence of radii \((\rr_{k})\subset\R_+\) satisfying \(\limsup_{k\to\infty}\rr_{k}= \CONST \zz(\xx)\). Further this set allows to obtain in Lemma \ref{lem: main theo local} the bounds for all \(k\in\N\). 
\end{description}

On \(\Omega(\xx)\subset \Omega\) we find \(\tilde \upsilonv_{k,k(+1)}\in\Upss(\rr_{k})\) such that we can follow the arguments of Theorem 2.2 of \cite{AASP2013} to obtain the desired result with accuracy measured by \(\breve\Excgr_{Q}(\rr_{k},\xx)\).

\subsection{Probability of desirable set}
\label{sec: prob of des set}
Here we show that the set \(\Omega(\xx)\) actually is of probability greater \(1-8\ex^{-\xx}-\beta_{(\mathbf A)} \). We prove the following two Lemmas, which together yield the claim.
\begin{lemma}
 \label{lem: uniform deviation bound for spread}
The set \(\LCS(\nabla)\) satisfies
\begin{EQA}[c]
\P(\LCS(\nabla))\ge 1-7\ex^{-\xx}.
\end{EQA}
\end{lemma}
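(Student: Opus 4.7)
The plan is to decompose $\LCS(\nabla)$ into its six constituent events and apply a union bound, where each ingredient is delivered by a distinct tool already developed or imported into the paper. With each piece contributing at most $\ex^{-\xx}$ (and one piece potentially contributing two such terms), the sum matches the target $7\ex^{-\xx}$.

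First I would handle the two uniform-in-$\upsilonv$ events involving $\UU$ and $\breve\UU$. These are the core of the lemma and the main obstacle, because they require a supremum of a stochastic process over a \emph{family} of concentric balls $\Upss(\rr)$ with $\rr$ ranging up to $\RR(\xx)$ (resp.\ $4\RR(\xx)$). The right tool is Theorem~\ref{theo: bound for norm quad corrected}, the adapted version of Theorem D.1 of \cite{AASP2013} / Corollary 2.5 of \cite{SP2011}, which is precisely designed to bound $\sup_{\upsilonv\in\Upss(\rr)}\|\cdot\|$ minus a quadratic correction $2\rr^{2}$ uniformly in $\rr$. For the parametric process $\UU(\upsilonv)=\DF^{-1}\{\nabla\zetav(\upsilonv)-\nabla\zetav(\upsilonvs)\}$ the increment exponential-moment bound required by the theorem is supplied directly by condition $(\CS\DF_{1})$ with parameters $\rhor\nu_{1}$, and the entropy of the $\dimtotal$-dimensional ball produces the entropy parameter $4\dimtotal$ appearing in $\zzQ(\xx,4\dimtotal)$. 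For the semiparametric process $\breve\UU(\upsilonv)=\DPr^{-1}\{\scorer_{\thetav}\zetav(\upsilonv)-\scorer_{\thetav}\zetav(\upsilonvs)\}$ the same theorem applies under $(\breve\CS\DF_{1})$; the projection on the $\dimp$-dimensional efficient score together with the $\dimtotal$-dimensional index set explains the effective entropy $2\dimtotal+2\dimp$. Each of these two applications contributes at most $\ex^{-\xx}$ (or $2\ex^{-\xx}$ if the theorem as stated yields two tail terms).

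Next I would treat the three gradient-norm events $\{\|\DF^{-1}\nabla\LL(\upsilonvs)\|\le\zz(\xx)\}$, $\{\|\DP^{-1}\nabla_{\thetav}\LL(\upsilonvs)\|\le\zz(\xx)\}$ and $\{\|\HH^{-1}\nabla_{\etav}\LL(\upsilonvs)\|\le\zz(\xx)\}$. Since $\nabla\LL(\upsilonvs)=\nabla\zeta(\upsilonvs)$ (the expectation having vanishing gradient at $\upsilonvs=\upsilonvd$), condition $(\CS\DF_{0})$ gives a uniform sub-Gaussian MGF bound for $\langle\nabla\zeta(\upsilonvs),\gammav\rangle/\|\VF\gammav\|$. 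Via the standard quadratic-form deviation inequality (Theorem~\ref{theo: dev bounds quad forms} in the appendix), this yields the threshold $\zz(\xx,\BB)\le\zz(\xx)$ with probability $\ge 1-\ex^{-\xx}$. The two block versions are handled identically by restricting the test vector $\gammav$ to the $\thetav$- or $\etav$-block, which does not enlarge the sub-Gaussian constant. This contributes three $\ex^{-\xx}$ summands.

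Finally the event $\{\tilde\ups,\tilde\upsilonv_{\thetavs}\in\Upss(\rups(\xx))\}$ holds with probability at least $1-\ex^{-\xx}$ by the very definition \eqref{eq: def of rups} of $\rups(\xx)$ (cf.\ Remark~\ref{rem: treatment of rups}: under $(\cc{L}_{\rr})$ and $(\CS\rr)$ this radius is provided by Theorem~\ref{theo: large def with K} applied both to the unconstrained optimizer $\tilde\ups$ and to the $\thetav$-constrained optimizer $\tilde\upsilonv_{\thetavs}$). Summing the six (or seven) contributions by a union bound gives $\P\bigl(\LCS(\nabla)^{c}\bigr)\le 7\ex^{-\xx}$, which is the claim. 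The only delicate point is that the bounds for the $\UU$- and $\breve\UU$-suprema must hold simultaneously for \emph{all} $\rr\le\RR$ (resp.\ $4\RR$); this is precisely what the quadratic-correction device of Theorem~\ref{theo: bound for norm quad corrected} is built to provide, and it is exactly what makes the later a-priori bound $\|\DF(\tilde\upsilonv_{k,k(+1)}-\upsilonvs)\|\le\rr_{k}$ usable uniformly in $k$ in the proof of Theorem~\ref{theo: main Theorem}.
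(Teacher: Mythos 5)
Your overall architecture is the paper's: a union bound over the constituent events of \(\LCS(\nabla)\), with Theorem \ref{theo: bound for norm quad corrected} (under \( (\CS \DF_{1}) \) and \( (\breve\CS \DF_{1}) \)) giving \(\ex^{-\xx}\) for each of the two uniform-in-\(\rr\) supremum events, and the definition of \(\rups(\xx)\) giving \(\ex^{-\xx}\) for \(\{\tilde\ups,\tilde\upsilonv_{\thetavs}\in\Upss(\rups)\}\). The gap is in your treatment of the three gradient-norm events. First, the quadratic-form deviation bound you invoke (Theorem \ref{theo: dev bounds quad forms}) is a \emph{two-sided} tail bound: it yields \(\P\bigl(\|\BB\xiv\|\ge \zz(\xx,\BB)\bigr)\le 2\ex^{-\xx}\), not \(\ex^{-\xx}\). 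So applying it three times, once per block as you propose, costs \(6\ex^{-\xx}\), and your total becomes \(9\ex^{-\xx}\), overshooting the claimed \(7\ex^{-\xx}\). Second, the block-restriction step is not free: restricting \(\gammav\) to the \(\thetav\)- or \(\etav\)-block of \( (\CS \DF_{0}) \) produces quantiles \(\zz(\xx,\BB_{\thetav})\), \(\zz(\xx,\BB_{\etav})\) governed by different matrices, and you would still have to argue these are dominated by \(\zz(\xx)\).

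The paper avoids both issues with Lemma \ref{lem: norm bound is exact}: under \( (\AssId) \) one has the \emph{deterministic} inequality \(\|\DP^{-1}\score_{\thetav}\|\vee\|\HH^{-1}\score_{\etav}\|\le\|\DF^{-1}\score\|\), so the single event \(\{\|\DF^{-1}\score\|\le\zz(\xx,\BB)\}\) already implies all three gradient-norm bounds and costs only \(2\ex^{-\xx}\). The remaining \(2\ex^{-\xx}\) in the paper's budget is spent on \(\P\bigl(\|\DPr^{-1}\scorer_{\thetav}\|>\zz(\xx,\BBr)\bigr)\le 2\ex^{-\xx}\), an event your decomposition does not cover at all but which the paper's proof includes in the count, yielding \(1+1+2+1+2=7\). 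So to repair your argument you need (i) the domination lemma in place of three separate quadratic-form applications, and (ii) the correct \(2\ex^{-\xx}\) bookkeeping for each quadratic-form bound.
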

\begin{proof}
The proof is similar to the proof of Theorem 3.1 in \cite{SP2011}. 
Denote
\begin{EQA}
\mathcal A&\eqdef&\bigcap_{\rr\le\RR(\xx)}\bigg\{\sup_{\upsilonv\in\Upss(\rr)}\left\{\frac{1}{6 \rhor\nu_1 }\|\UU(\upsilonv)\|-2\rr^2\right\}\le  \zzQ(\xx,4{\dimtotal})^2 \bigg\}\\
\mathcal B&\eqdef&\bigcap_{\rr\le4\RR(\xx)}\bigg\{\sup_{\upsilonv\in\Upss(\rr)}\left\{\frac{1}{6 \breve\rhor\breve\nu_1 }\|\breve\UU(\upsilonv)\|-2\rr^2\right\}\le  \zzQ(\xx,2\dimtotal+2\dimp)^2 \bigg\}\\
\mathcal C&\eqdef&\Big\{\max\{\|\DF^{-1}\nabla\LL\|,\|\DP^{-1}\nabla_{\thetav}\LL\|,\|\HH^{-1}\nabla_{\etav}\LL\|\}\le \zz(\xx)\Big\}.
\end{EQA}
We estimate
\begin{EQA}
\P(\LCS(\nabla))
	  &\ge&1-\P\left(\mathcal A^c\right)-\P\left(\mathcal B^c\right)-\P\left(\mathcal C^c\right)\\
	  	&&-\P\left(\tilde \upsilonv,\tilde \upsilonv_{\thetavs}\notin \Upss(\rups)\right)	-\P\left(\|\DPr^{-1}\scorer_{\thetav}\|^2 > \zz(\xx,\breve\BB_{\thetav})\right).
 \end{EQA}
We bound using for both terms 
Theorem \ref{theo: bound for norm quad corrected} which is applicable due to \( {(\CS \DF_{1})}\) and \( {(\breve\CS \DF_{1})} \):
\begin{EQA}
\P(\mathcal A^c) \le \ex^{-\xx}, && \P\left(\mathcal B^c\right)\le \ex^{-\xx}.
\end{EQA}
For the set \(\mathcal C\subset \Omega\) observe
that we can use \( (\bb{\AssId}) \) and Lemma \ref{lem: norm bound is exact} to find
\begin{EQA}[c]
     \| \HH^{-1} \score_{\etav} \|\vee\|\DP^{-1} \score_{\thetav}\|\le \|\DF^{-1}\score\|.
\end{EQA}
This implies that
\begin{EQA}
    &&\nquad\{ \| \DF^{-1} \score \| \le \zz(\xx,\BB)\}\\
    	&\subseteq& \{\|\DP^{-1}\score_{\thetav}\|\vee\| \HH^{-1} \score_{\etav} \| \le  \zz(\xx,\BB)\}.
\end{EQA} 
Using the deviation properties of quadratic forms as sketched in Section \ref{ap: Deviation bounds for quadratic forms} we find
\begin{EQA}
    \P\left( \| \DF^{-1} \score \| > \zz(\xx,\BB)\right)    
     \le 
    2\ex^{-\xx} ,\,&&
    \P\bigl( \| \DPr^{-1} \scorer \| > \zz(\xx,\BBr)\bigr)
    \le 
    2\ex^{-\xx} .
\end{EQA}
By the choice of \(\zz(\xx)>0\) and \(\rups>0\) this gives the claim.
\end{proof}

We cite Lemma B.2 of \cite{AASP2013}:
\begin{lemma}\label{lem: norm bound is exact}
Let 
\begin{EQA}
\DF^2=\left( 
      \begin{array}{cc}
        \DP^{2} & \A \\
        \A^{\T}& \HH^{2} 
      \end{array}  
    \right)\in\R^{(\dimp+\dimp)\times(\dimp+\dimp)},&\quad \DP\in\R^{\dimp\times\dimp},\,\HH\in\R^{\dimh\times\dimh}\,\text{invertible,} \\
    \quad \|\DP^{-1}\A\HH^{-1}\|<1.
\end{EQA}
Then for any \(\ups=(\thetav,\etav)\in\R^{\dimp+\dimh}\) we have \(\|\HH^{-1}\etav\|\vee\|\DP^{-1}\thetav\|\le \|\DF^{-1}\ups\|\).
\end{lemma}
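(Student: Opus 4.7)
The natural approach is to invert $\DF^{2}$ in block form via Schur complements and then write the scalar quantity $\|\DF^{-1}\ups\|^{2}=\ups^{\T}\DF^{-2}\ups$ as a sum of two nonnegative quadratic forms, one of which is exactly $\|\HH^{-1}\etav\|^{2}$ and the other exactly $\|\DP^{-1}\thetav\|^{2}$ (in two different decompositions).

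Concretely, I would first observe that the hypothesis $\|\DP^{-1}\A\HH^{-1}\|<1$ guarantees the two Schur complements
\begin{EQA}
S_{1} \eqdef \DP^{2}-\A\HH^{-2}\A^{\T},\qquad S_{2} \eqdef \HH^{2}-\A^{\T}\DP^{-2}\A,
\end{EQA}
are positive definite, since, for example, $\DP^{-1}S_{1}\DP^{-1}=I_{\dimp}-(\DP^{-1}\A\HH^{-1})(\DP^{-1}\A\HH^{-1})^{\T}\succ 0$ by the operator norm bound, and symmetrically for $S_{2}$. In particular $\DF^{2}$ is invertible and the standard block-inversion formula gives
\begin{EQA}
\DF^{-2}=\begin{pmatrix} S_{1}^{-1} & -S_{1}^{-1}\A\HH^{-2}\\ -\HH^{-2}\A^{\T}S_{1}^{-1} & \HH^{-2}+\HH^{-2}\A^{\T}S_{1}^{-1}\A\HH^{-2}\end{pmatrix}.
\end{EQA}

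Next I would expand $\ups^{\T}\DF^{-2}\ups$ with $\ups=(\thetav,\etav)$ and complete the square in $\thetav$. A direct computation shows
\begin{EQA}
\ups^{\T}\DF^{-2}\ups=(\thetav-\A\HH^{-2}\etav)^{\T}S_{1}^{-1}(\thetav-\A\HH^{-2}\etav)+\|\HH^{-1}\etav\|^{2}.
\end{EQA}
Since $S_{1}^{-1}\succ 0$, the first term is nonnegative, so $\|\HH^{-1}\etav\|^{2}\le\|\DF^{-1}\ups\|^{2}$. Running the same argument but factoring with respect to the other block (i.e., using $S_{2}$ and completing the square in $\etav$) produces
\begin{EQA}
\ups^{\T}\DF^{-2}\ups=\|\DP^{-1}\thetav\|^{2}+(\etav-\A^{\T}\DP^{-2}\thetav)^{\T}S_{2}^{-1}(\etav-\A^{\T}\DP^{-2}\thetav),
\end{EQA}
which yields $\|\DP^{-1}\thetav\|^{2}\le\|\DF^{-1}\ups\|^{2}$. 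Taking the maximum of the two bounds gives the claim.

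The only subtlety is bookkeeping for the Schur complements: one must verify that the contraction assumption $\|\DP^{-1}\A\HH^{-1}\|<1$ suffices to make both $S_{1}$ and $S_{2}$ strictly positive definite, but this is immediate from the spectral-norm inequality above applied to the symmetric product $(\DP^{-1}\A\HH^{-1})(\DP^{-1}\A\HH^{-1})^{\T}$ and its transpose counterpart. Everything else is a single line of block algebra, so there is no real analytic obstacle.
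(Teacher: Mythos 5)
Your proof is correct. Note that this paper does not actually prove the lemma: it is quoted as Lemma B.2 of \cite{AASP2013}, so there is no in-paper argument to compare against, and a self-contained proof like yours is welcome. The details check out: \(\|\DP^{-1}\A\HH^{-1}\|<1\) makes both Schur complements \(S_{1}\) and \(S_{2}\) positive definite exactly as you argue (and hence \(\DF^{2}\) itself, so \(\DF^{-1}\) and \(\|\DF^{-1}\ups\|\) are well defined), the block-inversion formula is the standard one, and both completions of the square are algebraically right, yielding \(\ups^{\T}\DF^{-2}\ups\ge\|\HH^{-1}\etav\|^{2}\) and \(\ups^{\T}\DF^{-2}\ups\ge\|\DP^{-1}\thetav\|^{2}\). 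You do implicitly use that \(\DP\) and \(\HH\) are symmetric (e.g.\ in writing \(\|\HH^{-1}\etav\|^{2}=\etav^{\T}\HH^{-2}\etav\) and \(S_{1}^{-1}=(S_{1}^{-1})^{\T}\)), which is the standing convention here since they are the symmetric square roots of the diagonal blocks of the information matrix; it is worth saying explicitly. For comparison, there is an even shorter route that matches how the bound is invoked in the paper: since \(\thetav=\Pi_{\thetav}\ups\), write \(\DP^{-1}\thetav=\bigl(\DP^{-1}\Pi_{\thetav}\DF\bigr)\DF^{-1}\ups\) and note \(\|\DP^{-1}\Pi_{\thetav}\DF\|^{2}=\|\DP^{-1}\Pi_{\thetav}\DF^{2}\Pi_{\thetav}^{\T}\DP^{-1}\|=\|\DP^{-1}\DP^{2}\DP^{-1}\|=1\), and similarly for the \(\etav\)-block; this only needs \(\DF^{2}\) positive definite, which is where the contraction condition enters. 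Your decomposition buys a bit more, namely an exact expression for the gap in each inequality, but both arguments are essentially one line of block algebra.
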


The next step is to show that the set \(\bigcap_{k=1}^{K}(\LCS_{k,k}\cap\LCS_{k,k+1})\) has high probability, that is independent of the number of necessary steps. A close look at the proof of Theorem~4.1 of \cite{SP2011} shows that it actually yields the following modified version:
\begin{theorem}[\cite{SP2011}, Theorem~4.1]
\label{theo: large def with K}
Suppose \( (\CS\rr) \) and \( (\cc{L}\rr) \) with \( \gmi(\rr) \equiv \gmi \). Further define the following random set
\begin{EQA}[c]
 \Ups(K)\eqdef \{\ups\in\Ups: \LL(\ups,\upss)\ge -K\}.
\end{EQA}
If for a fixed \( \rups \) and any \( \rr \ge \rups \), the following conditions are fulfilled:
\begin{EQA}
    1 + \sqrt{\xx + 2\dimtotal} 
    & \le &
    3 \nu_{\rr}^{2} \gm(\rr)/\gmi ,
    \\
    6 \nu_{\rr} \sqrt{\xx + 2\dimtotal+\frac{\gmi}{9\nu_{\rr}^2}K}
    & \le &
    \rr\gmi ,
\label{cgmi2rrc}
\end{EQA}
then 
\begin{EQA}
	\P(\Ups(K) \subseteq\Upss(\rups))
	& \ge &
	1-\ex^{-\xx}.
\label{PLDttuttut}
\end{EQA}
\end{theorem}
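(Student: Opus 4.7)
The plan is to prove the contrapositive: with probability at least $1 - \ex^{-\xx}$, every point $\ups \in \Ups$ satisfying $\|\DF(\ups - \upss)\| > \rups$ also satisfies $\LL(\ups, \upss) < -K$, so that no such point can lie in $\Ups(K)$. Splitting $\LL(\ups, \upss) = \E \LL(\ups, \upss) + \zeta(\ups, \upss)$, the deterministic drift is handled by $(\cc{L}_\rr)$ and the stochastic fluctuation by $(\CS\rr)$.

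First I would perform a geometric peeling of the region $\{\ups \in \Ups : \|\DF(\ups - \upss)\| > \rups\}$ into shells $A_j = \{\ups : \rr_{j-1} < \|\DF(\ups-\upss)\| \le \rr_j\}$ with $\rr_j = 2^{j/2}\rups$. On each shell, condition $(\cc{L}_\rr)$ applied at scale $\rr = \rr_j$ gives
\begin{EQA}[c]
-\E\LL(\ups, \upss) \ge \gmi \|\DF(\ups-\upss)\|^2 \ge \gmi \rr_{j-1}^2,
\end{EQA}
so the desired inequality $\LL(\ups, \upss) < -K$ on $A_j$ reduces to the uniform stochastic bound $\sup_{\ups \in A_j}\zeta(\ups, \upss) < \gmi \rr_{j-1}^2 - K$.

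Second, this uniform bound is obtained by standard chaining. The exponential moment hypothesis $(\CS\rr)$ on $\langle \nabla\zeta(\ups), \gammav\rangle/\|\DF\gammav\|$, valid up to $\mubc \le \gm(\rr)$, combined with the first hypothesis $1 + \sqrt{\xx + 2\dimtotal} \le 3\nu_\rr^2 \gm(\rr)/\gmi$ (which guarantees that the required deviation level lies within the sub-Gaussian range of $\zeta$) yields, via chaining over the $\DF$-ball $\Upss(\rr_j)$,
\begin{EQA}[c]
\P\left(\sup_{\ups \in \Upss(\rr_j)}\zeta(\ups, \upss) \ge 3\nu_{\rr_j}\rr_j\sqrt{\xx + j + 2\dimtotal + \gmi K/(9\nu_{\rr_j}^2)}\right) \le \ex^{-\xx - j}.
\end{EQA}
The second hypothesis $6\nu_\rr \sqrt{\xx + 2\dimtotal + \gmi K/(9\nu_\rr^2)} \le \rr\gmi$ evaluated at $\rr = \rr_j$ is tailored precisely to absorb the additive $K$ into the square root: squaring it and rearranging shows that the right-hand side of the display above is bounded by $\gmi\rr_{j-1}^2 - K$ for every $j \ge 1$ (the case $j=1$ being binding; larger $j$ benefit from the quadratic growth of the drift versus the linear-in-$\rr$ bound on $\zeta$). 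Summing $\sum_{j\ge 1}\ex^{-\xx - j} \le \ex^{-\xx}$ yields the claimed probability.

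The main obstacle is the chaining step: one must verify that the metric entropy of $\Upss(\rr_j)$ in the intrinsic pseudo-distance of $\zeta$ scales like $\dimtotal$, so that the chaining exponent comes out proportional to $\xx + \dimtotal$ rather than something worse. This is exactly the content of the original Theorem~4.1 of \cite{SP2011}, and the present statement is a direct modification which tracks the additive shift $-K$ instead of $0$; the shift enters cleanly through the replacement of the original hypothesis $6\nu_\rr\sqrt{\xx + 2\dimtotal} \le \rr\gmi$ by the strengthened version $6\nu_\rr\sqrt{\xx + 2\dimtotal + \gmi K/(9\nu_\rr^2)} \le \rr\gmi$, leaving the rest of the proof structure intact.
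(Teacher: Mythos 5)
Your proposal is correct and follows essentially the same route as the paper, which itself gives no separate argument but notes that the proof of Theorem~4.1 of \cite{SP2011} (peeling the complement of \(\Upss(\rups)\) into shells, lower-bounding the deterministic drift via \((\cc{L}\rr)\), and controlling \(\sup\zeta\) by chaining under \((\CS\rr)\)) carries over verbatim once the radius condition is strengthened to absorb the additive level \(-K\). Your reconstruction of that argument, with the second hypothesis \(6\nu_{\rr}\sqrt{\xx+2\dimtotal+\gmi K/(9\nu_{\rr}^{2})}\le\rr\gmi\) doing exactly this absorption, matches the paper's intended proof.
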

Note that with \( ({\AssId}) \)
\begin{EQA}[c]
\|\DP(\tilde\thetav_{k}-\thetavs)\|\vee\|\HH(\tilde \etav_{k(+1)}-\etavs)\|\le \frac{1}{1-\corrDF}\|\DF(\tilde\ups_{k,k(+1)}-\upss)\|.
\end{EQA}
With assumption \((B_1)\) and 
\begin{EQA}[c]
\RR(\xx)= \frac{6 \nunu}{\gmi(1- \corrDF)} \sqrt{\xx + \entrlb+\frac{\gmi}{9 \nunu^2}\KL(\xx)},
\end{EQA}
this implies the desired result as \(\LL(\ups_{k,k(+1)},\upss)\ge \LL(\tilde\ups_0,\upss)\) such that with Theorem \ref{theo: large def with K}
\begin{EQA}
\P\left(\bigcap_{k=0}^{K}(\LCS_{k,k}\cap\LCS_{k,k+1})\right)&\ge&\P\left(\bigcap_{k=0}^{K}(\LCS_{k,k}\cap\LCS_{k,k+1})\cap\left\{\LL(\tilde\ups_0,\upss)\ge -\KL\right\}\right)\\
	&&-\P(\LL(\tilde\ups_0,\upss)\le -\KL)\\
	&\ge&\P\left\{\Ups(\KL(\xx))\subset \Upss\Big((1-\corrDF)\RR(\xx)\Big)\right\}-\beta_{\bb{(A)}}\\
	&\ge& 1-\ex^{-\xx}-\beta_{\bb{(A)}}.
\end{EQA}
\begin{remark}
This also shows that the sets of maximizers \((\tilde\ups_{k,k(+1)})\) are nonempty and well defined since the maximization always takes place on compact sets of the form \(\{\thetav\in\R^{\dimp},\,(\thetav,\etav)\in \Upss(\RR)\}\) or  \(\{\etav\in\R^{\dimh},\,(\thetav,\etav)\in \Upss(\RR)\}\).
\end{remark}

To address the claim of remark \ref{rem: how to get cond A1} we present the following Lemma:
\begin{lemma}
On the set \(\LCS(\nabla)\cap \{\tilde \upsilonv_0\in\Upss(R_{K})\}\) it holds
\begin{EQA}
\LL(\upsilonv_0,\upsilonvs)&\ge& -(1/2+12\nunu\omega)R_{K}^2-(\delta(R_{K})+\zz(\xx))R_{K} -6\nunu\omega\zz(\xx)^2.
\end{EQA}
\end{lemma}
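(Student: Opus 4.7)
The strategy is to split $\LL(\tilde\upsilonv_0,\upsilonvs) = \E\LL(\tilde\upsilonv_0,\upsilonvs) + \zeta(\tilde\upsilonv_0,\upsilonvs)$ with $\zeta = \LL - \E\LL$, and exploit the first-order optimality $\nabla\E\LL(\upsilonvs)=0$, so that $\nabla\zeta(\upsilonvs)=\nabla\LL(\upsilonvs)$ -- a quantity whose $\DF^{-1}$-norm is controlled by $\zz(\xx)$ on $\LCS(\nabla)$. Since $\tilde\upsilonv_0\in\Upss(R_K)$ on the event under consideration, the whole segment from $\upsilonvs$ to $\tilde\upsilonv_0$ lies in $\Upss(R_K)$, so the smoothness bounds from $(\LL_{0})$ and the uniform gradient-difference bounds from $\LCS(\nabla)$ are applicable along it.

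For the deterministic piece I would use the integral form of the second-order Taylor expansion,
\begin{EQA}[c]
\E\LL(\tilde\upsilonv_0,\upsilonvs)
= \int_0^1 (1-t)\,(\tilde\upsilonv_0-\upsilonvs)^{\T} \nabla^2\E\LL\bigl(\upsilonvs+t(\tilde\upsilonv_0-\upsilonvs)\bigr)(\tilde\upsilonv_0-\upsilonvs)\,dt,
\end{EQA}
combined with $({\LL_{0}})$ to replace $\nabla^2\E\LL(\cdot)$ by $-\DF^2$ up to a relative error of order $\delta(R_K)$. Using $\|\DF(\tilde\upsilonv_0-\upsilonvs)\|\le R_K$ this yields a contribution $\ge -\tfrac{1}{2}(1+\delta(R_K))R_K^2$, which provides the $-\tfrac{1}{2}R_K^2$ principal term and the $-\delta(R_K)R_K$ contribution of the claim.

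For the stochastic piece, a first-order Taylor expansion in gradient form gives
\begin{EQA}[c]
\zeta(\tilde\upsilonv_0,\upsilonvs)
= \nabla\zeta(\upsilonvs)^{\T}(\tilde\upsilonv_0-\upsilonvs) + \int_0^1 \UU\bigl(\upsilonvs+t(\tilde\upsilonv_0-\upsilonvs)\bigr)^{\T}\DF(\tilde\upsilonv_0-\upsilonvs)\,dt.
\end{EQA}
The linear term is bounded below by $-\|\DF^{-1}\nabla\LL(\upsilonvs)\|\cdot R_K\ge -\zz(\xx)\,R_K$ on $\LCS(\nabla)$. For the integral remainder I would invoke the uniform bound in $\LCS(\nabla)$ at radius $\rr=R_K\le\RR(\xx)$, namely $\|\UU(\upsilonv')\|\le 6\omega\nu_1\bigl(\zzQ(\xx,4\dimtotal)^2+2R_K^2\bigr)$ for every $\upsilonv'\in\Upss(R_K)$. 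Splitting this bound into its $\zzQ^2$ and $R_K^2$ parts, using $\zz(\xx)\ge \zzQ(\xx,4\dimtotal)$, then accounts for the remaining terms $-12\nunu\omega R_K^2$ and $-6\nunu\omega\zz(\xx)^2$ of the claim.

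The main obstacle I anticipate is the bookkeeping required to pass from the integral remainder to the purely additive form of the stated bound. A naive Cauchy--Schwarz produces $\sup\|\UU\|\cdot R_K$, and because the $\UU$-bound on $\LCS(\nabla)$ contains the absolute term $\zzQ^2$ this would yield extraneous cross-terms of the form $R_K\zzQ^2$ and $R_K^3$. To recover the clean shape of the lemma one must group terms so that the $\zzQ^2$-contribution enters as a pure additive $6\nunu\omega\zz(\xx)^2$ and the $R_K^2$-contribution merges into the $12\nunu\omega R_K^2$ coefficient -- essentially a weighted Young's-type splitting of the Cauchy--Schwarz cross-term, which leverages the fact that $R_K\le\RR(\xx)$ need not appear as an additional multiplicative factor once the $\UU$-bound is treated as a bound on a ``quadratic-corrected'' process rather than as a Lipschitz bound.
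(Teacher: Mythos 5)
Your plan is essentially the paper's own argument: split \(\LL=\E\LL+\zeta\), control \(\E\LL(\tilde\upsilonv_0,\upsilonvs)\) by a second-order Taylor expansion together with \((\LL_{0})\), bound the linear stochastic term by \(\|\DF^{-1}\nabla\zeta(\upsilonvs)\|R_{K}\le\zz(\xx)R_{K}\) on \(\LCS(\nabla)\), and bound the gradient-gap remainder through the quadratically corrected \(\UU\)-bound built into \(\LCS(\nabla)\) — the paper writes the remainder with a mean-value point \(\hat\upsilonv\) where you use the integral form, which is immaterial. The ``bookkeeping obstacle'' you flag is genuine but is not resolved in the paper either: its final inequality silently passes from \(6\nu_{1}\omega\bigl(\zzQ(\xx,4\dimtotal)^{2}+2R_{K}^{2}\bigr)R_{K}\) to \(6\nunu\omega\zz(\xx)^{2}+12\nunu\omega R_{K}^{2}\) (and from the Taylor error \(\delta(R_{K})R_{K}^{2}/2\) to \(\delta(R_{K})R_{K}\)), so on exactly the point you identify your proposal is no less complete than the paper's own proof.
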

\begin{proof}
With similar arguments as in the proof of Lemma \ref{lem: recursion for statistical properties} we have on \(\LCS(\nabla)\subset\Omega\) that
\begin{EQA}
 \LL(\upsilonv_0,\upsilonvs)&\ge& \E[\LL(\upsilonv_0,\upsilonvs)]-\|\DF^{-1}\nabla\zetav(\upsilonvs)\|R_{K}-|\{\nabla\zetav(\hat\upsilonv)-\nabla\zetav(\upsilonvs)\}(\upsilonv_0-\upsilonvs)|\\
  &\ge& -\|\DF(\upsilonv_0-\upsilonvs)\|^2/2-\|\DF^{-1}\nabla\zetav(\upsilonvs)\|R_{K}\\
  	&&-\|\DF^{-1} \bigl\{ 
		\nabla \LL(\hat \upsilonv) - \nabla \LL(\upsilonvs)\bigr\}\|R_{K}-R_{K}\delta(R_{K})\\
    &\ge& -(1/2+12\nunu\omega)R_{K}^2-(\delta(R_{K})+\zz(\xx))R_{K} -6\nunu\omega\zz(\xx)^2.
\end{EQA}
\end{proof}

\subsection{Proof convergence}
\label{sec: Proof of nonasymptotic Fisher theorem}
We derive the a priori bound \(\tilde\upsilonv_{k,k(+1)}\in\Upss(\rr_{k})\) with an adequately decreasing sequence \((\rr_{k})\subset\R_+\) using  
the argument of Section \ref{sec: idea of the proof}, where \(\limsup \rr_{k}\approx \zz(\xx)\). 

\begin{lemma}
 \label{lem: recursion for statistical properties}
Assume that
\begin{EQA}[c]
\Omega(\xx)\subseteq\bigcap_{k\in\N}\left\{ \ups_{k,k(+1)}\in\Upss\left(\rr_k^{(l)}\right)\right\}.
\end{EQA}
Then under the assumptions of Theorem \ref{theo: main Theorem} we get on \(\Omega(\xx)\) for all \(k\in\N_0\)
\begin{EQA}
 \big\|\DF(\tilde\upsilonv_{k,k(+1)}-\upsilonvs)\big\|&\le&   2\sqrt{2}(1-\sqrt\corrDF)^{-1}\left(\zz(\xx)+(1+\sqrt \corrDF)\corrDF^{k}\RR(\xx)\right)\\
 	&&+2\sqrt{2}(1+\sqrt{\corrDF})\sum_{r=0}^{k-1}\corrDF^{r}\Excgr_{Q}\left(\rr_r^{(l)}\right)\\
  &=:&\rr_{k}^{(l+1)}.
\label{eq: bound for shrinkin radi stat prop}
\end{EQA}
\end{lemma}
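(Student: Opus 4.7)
The starting point is the pair of first order optimality conditions that define the half steps: $\nabla_{\thetav}\LL(\tilde\thetav_k,\tilde\etav_k)=0$, $\nabla_{\etav}\LL(\tilde\thetav_{k-1},\tilde\etav_k)=0$, and similarly $\nabla_{\etav}\LL(\tilde\thetav_k,\tilde\etav_{k+1})=0$. On these identities I would replace $\nabla\LL(\upsilonv)$ by its affine approximation at $\upsilonvs$,
\begin{EQA}[c]
\nabla\LL(\upsilonv)=\nabla\zeta(\upsilonvs)-\DF^{2}(\upsilonv-\upsilonvs)+r(\upsilonv),
\end{EQA}
and control $r(\upsilonv)$ on $\Upss(\rr_k^{(l)})\subseteq\Upss(\RR)$ (which holds by the standing hypothesis of the lemma and by $\LCS_{k,k(+1)}\subset\Omega(\xx)$). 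The remainder $r$ splits into a deterministic part handled by $(\breve\LL_{0})$ and producing a $\breve\rddelta(\rr_k^{(l)})\rr_k^{(l)}$ contribution, and a stochastic part $\nabla\zeta(\upsilonv)-\nabla\zeta(\upsilonvs)$ which, on the event $\LCS(\nabla)\subset\Omega(\xx)$, is bounded via the quadratic--corrected spread inequality for $\breve\UU$ built into the definition of $\LCS(\nabla)$; together these give precisely the $\breve\Excgr_{Q}(\rr_k^{(l)},\xx)$ error.

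The next step is to eliminate $\etav$. Writing $\xi_{k}=\tilde\thetav_k-\thetavs$ and $\eta_k=\tilde\etav_k-\etavs$, the two linearised optimality conditions become
\begin{EQA}[c]
\DP^{2}\xi_{k}+A\eta_{k}=\nabla_{\thetav}\zeta(\upsilonvs)+E_{\thetav,k},\qquad A^{\T}\xi_{k-1}+\HH^{2}\eta_{k}=\nabla_{\etav}\zeta(\upsilonvs)+E_{\etav,k},
\end{EQA}
with $\|\DP^{-1}E_{\thetav,k}\|,\|\HH^{-1}E_{\etav,k}\|\lesssim\breve\Excgr_{Q}(\rr_{k}^{(l)},\xx)$. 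Solving the second equation for $\HH\eta_k$ and plugging into the first gives
\begin{EQA}[c]
\DP\xi_{k}=\DP^{-1}A\HH^{-2}A^{\T}\DP^{-1}\,\DP\xi_{k-1}+\DP^{-1}\scorer_{\thetav}\zeta(\upsilonvs)+\DP^{-1}(E_{\thetav,k}-A\HH^{-2}E_{\etav,k}).
\end{EQA}
Condition $(\AssId)$ gives that the operator $\DP^{-1}A\HH^{-2}A^{\T}\DP^{-1}$ has spectral norm $\le\corrDF$, so this is a geometric recursion with contraction $\corrDF$. Iterating from $k=0$ yields
\begin{EQA}[c]
\|\DP\xi_{k}\|\le\corrDF^{k}\|\DP\xi_{0}\|+\sum_{r=0}^{k-1}\corrDF^{r}\bigl(\|\DP^{-1}\scorer_{\thetav}\zeta(\upsilonvs)\|+C\breve\Excgr_{Q}(\rr_{k-r}^{(l)},\xx)\bigr),
\end{EQA}
where the initial bound $\|\DP\xi_{0}\|\le(1-\corrDF)^{-1/2}\RR$ is furnished by $\LCS_{0,0}\subset\Omega(\xx)$, and $\|\DP^{-1}\scorer_{\thetav}\zeta(\upsilonvs)\|\le\zz(\xx)$ comes from $\LCS(\nabla)$. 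The analogous manipulation beginning with the $\etav$-update of the next half step gives the same bound for $\tilde\upsilonv_{k,k+1}$.

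The final step is to convert the $\DP\xi_{k}$ estimate to the full norm $\|\DF(\tilde\upsilonv_{k,k(+1)}-\upsilonvs)\|$. For this I would use the block factorisation
\begin{EQA}[c]
\|\DF(\upsilonv-\upsilonvs)\|^{2}=\|\DPr(\thetav-\thetavs)\|^{2}+\|\HH(\etav-\etavs)+\HH^{-1}A^{\T}(\thetav-\thetavs)\|^{2},
\end{EQA}
together with $\|\DPr\DP^{-1}\|\le1$ and $\|\HH^{-1}A^{\T}\DP^{-1}\|=\sqrt{\corrDF}$ (the latter from $(\AssId)$). Substituting the $\etav$-equation for $\HH\eta$ inside the second summand produces the factor $(1+\sqrt{\corrDF})$ in front of $\|\DP\xi_{k}\|$ and an extra $\sqrt{\corrDF}\|\DP\xi_{k-1}\|$ term. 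Bounding $\sqrt{a^{2}+b^{2}}\le\sqrt{2}(a+b)$ twice yields the explicit constant $2\sqrt{2}(1+\sqrt{\corrDF})$ and, via $(1-\sqrt{\corrDF})^{-1}=(1+\sqrt{\corrDF})(1-\corrDF)^{-1}$, rewrites the geometric sum $\sum_{r}\corrDF^{r}\zz(\xx)$ as $(1+\sqrt{\corrDF})^{-1}(1-\sqrt{\corrDF})^{-1}\zz(\xx)$, giving exactly \eqref{eq: bound for shrinkin radi stat prop}.

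\textbf{Main obstacle.} The hard part is not any single estimate but the bookkeeping: at each half step there are two error sources (deterministic via $\breve\rddelta$ and stochastic via $\breve\UU$), each referring to a different radius $\rr_r^{(l)}$, and the contributions of the $\thetav$- and $\etav$-half-steps must be tracked separately through the elimination of $\eta_k$ and through the block identity for $\|\DF\cdot\|$ in order to match the exact constants $2\sqrt{2}$, $(1+\sqrt{\corrDF})$, and $(1-\sqrt{\corrDF})^{-1}$ that appear in the stated bound.
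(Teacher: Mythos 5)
Your overall route is the paper's: linearize the gradient at \(\upss\), use the stepwise first-order conditions, eliminate the nuisance update to get a contraction with rate \(\corrDF\) from \((\AssId)\), and pass back to the \(\DF\)-norm; the exact block factorization of \(\|\DF(\ups-\upss)\|^2\) you use instead of the paper's cruder bound \(\|\DF(\ups-\upss)\|\le\sqrt{2}\|\DP(\thetav-\thetavs)\|+\sqrt{2}\|\HH(\etav-\etavs)\|\), and the fully decoupled recursion in \(\DP\xi_k\) instead of the paper's coupled pair of recursions, are legitimate cosmetic variants. The genuine gap is in the error bookkeeping. The error term in the statement is the \emph{parametric} spread \(\Excgr_{Q}(\rr,\xx)=\delta(\rr)\rr+6\nu_{1}\omega(\zzQ(\xx,4\dimtotal)+2\rr^2)\), and the paper derives it from the strong conditions \((\LL_{0})\) and \((\CS\DF_{1})\): the blockwise linearization errors are reduced, via Lemma \ref{lem: norm bound is exact}, to the full-vector quantities \(\DF^{-1}\bigl(\nabla\E\LL(\ups)-\nabla\E\LL(\upss)+\DF^{2}(\ups-\upss)\bigr)\) and \(\DF^{-1}\bigl(\nabla\zeta(\ups)-\nabla\zeta(\upss)\bigr)\), the latter controlled on \(\LCS(\nabla)\) through the parametric process \(\UU\). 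You instead invoke \((\breve\LL_{0})\) and the \(\breve\UU\)-part of \(\LCS(\nabla)\) and claim \(\|\DP^{-1}E_{\thetav,k}\|,\|\HH^{-1}E_{\etav,k}\|\lesssim\breve\Excgr_{Q}(\rr_{k}^{(l)},\xx)\). Those breve conditions only control the efficient score \(\scorer_{\thetav}\zeta=\nabla_{\thetav}\zeta-\AF\HH^{-2}\nabla_{\etav}\zeta\) normalized by \(\DPr^{-1}\); they say nothing about \(\nabla_{\etav}\zeta\) alone, so the bound on \(\HH^{-1}E_{\etav,k}\) cannot follow from them, and the resulting error does not match the \(\Excgr_{Q}(\rr_{r}^{(l)})\) in the bound you are proving.

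Nor can you rescue the step by reading \(E_{\thetav,k}-\AF\HH^{-2}E_{\etav,k}\) as the linearization error of the efficient score at a single point, which is what \((\breve\LL_{0})\) and \(\breve\UU\) would control: the two blocks are evaluated at different points, \(\tilde\ups_{k,k}\) for the \(\thetav\)-equation and \(\tilde\ups_{k-1,k}\) for the \(\etav\)-equation, so genuinely blockwise control is required. The fix is exactly the paper's device — bound each block by the full-dimensional linearization error using Lemma \ref{lem: norm bound is exact}, condition \((\LL_{0})\) for the deterministic part and the \(\UU\)-bound in \(\LCS(\nabla)\) (hence \(\zzQ(\xx,4\dimtotal)\)) for the stochastic part — after which your recursion and constant accounting go through essentially as you sketched. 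Two minor points: \(\LCS_{0,0}\subset\Omega(\xx)\) already gives \(\|\DP(\tilde\thetav_{0}-\thetavs)\|\le\RR(\xx)\) directly, so no extra factor \((1-\corrDF)^{-1/2}\) is needed; and \(\|\DP^{-1}\scorer_{\thetav}\zeta(\upss)\|\) is bounded on \(\LCS(\nabla)\) only up to a factor \((1+\sqrt{\corrDF})\) via \(\|\DP^{-1}\nabla_{\thetav}\zeta(\upss)\|\vee\|\HH^{-1}\nabla_{\etav}\zeta(\upss)\|\le\zz(\xx)\), which must be tracked when matching the stated constants.
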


\begin{proof}

1. We first show that on \(\Omega(\xx) \)
\begin{EQA}\label{eq: bound for sequence first step theta stat prop}
\DP(\tilde{\thetav}_k - \thetavs)&=&\DP^{-1}\nabla_{\thetav}\LL(\upsilonvs)-\DP^{-1}\AF(\tilde\etav_k-\etavs)+\bb{\tau}\big(\rr_{k}^{(l)}\big),\\
\HH(\tilde \etav_k-\etavs)&=&\HH^{-1}\nabla_{\etav}\LL(\upsilonvs)-\HH^{-1}\AF^\T(\tilde{\thetav}_{k-1} - \thetavs)+\bb{\tau}\big(\rr_{k}^{(l)}\big),
\label{eq: bound for sequence first step eta}
\end{EQA}
where
\begin{EQA}[c]
\|\bb{\tau}(\rr)\|\le \Excgr_{Q}(\rr,\xx)=\left\{\delta(\rr)\rr+6 \nu_{1}  \omega(\zzQ(\xx,4\dimtotal)+2\rr^2)\right\}.
\end{EQA}
The proof is the same in each step for both statements such that we only prove the first one. The arguments presented here are similar to those of Theorem D.1 in \citep{AASP2013}. By assumption on \(\Omega(\xx) \) we have \(\tilde \upsilonv_{k,k(+1)}\in\Upss\big(\rr_{k}^{(l)}\big) \). Define with \(\zeta=\LL-\E\LL\)
\begin{EQA}[c]
\label{eq: def of alpha}
\alpha(\upsilonv,\upsilonvs):=\LL(\upsilonv,\upsilonvs)-\left(\nabla \zeta(\upsilonvs)(\upsilonv-\upsilonvs)-\|\DF(\upsilonv-\upsilonvs)\|^2/2\right).
\end{EQA}
Note that
\begin{EQA}
\LL(\upsilonv,\upsilonvs)&=&\nabla \zeta(\upsilonvs)(\upsilonv-\upsilonvs)-\|\DF(\upsilonv-\upsilonvs)\|^2/2+\alpha(\upsilonv,\upsilonvs)\\
  &=&\nabla_{\thetav} \zeta(\upsilonvs)(\thetav-\thetavs)-\|\DP(\thetav-\thetavs)\|^2/2+(\thetav-\thetavs)^\T\AF(\etav-\etavs)\\
   &&+\nabla_{\etav} \zeta(\upsilonvs)(\etav-\etavs)-\|\HH(\etav-\etavs)\|^2/2+\alpha(\upsilonv,\upsilonvs).
\end{EQA}
Setting \(\nabla_{\thetav}\LL(\tilde\thetav_k,\tilde\etav_k)=0\) we find
\begin{EQA}[c]
\DP(\tilde\thetav_k-\thetavs)-\DF^{-1}\big(\nabla_{\thetav} \zeta(\upsilonvs)-\AF(\tilde\etav_k-\etavs)\big)=\DF^{-1}\nabla_{\thetav} \alpha(\tilde \upsilonv_{k,k},\upsilonvs).
\end{EQA}
As we assume that \(\tilde\upsilonv_{k,k}\in\Upss(\RR)\) it suffices to show that with dominating probability 
\begin{EQA}[c]
\sup_{(\thetav,\tilde\etav_{k})\in\Upss(\RR)}\|\UP_{\thetav}(\thetav,\tilde\etav_{k})\|\le \Excgr(\rr_{k}^{(l)}),
\end{EQA}
where
\begin{EQA}\label{eq: def of U process}
	\UP_{\thetav}(\thetav,\tilde\etav_{k})
	& \eqdef &
	\DP^{-1} \bigl\{ 
		\nabla_{\thetav} \LL(\tilde \upsilonv_{k,k}) - \nabla_{\thetav} \LL(\upsilonvs) -\DP^{2} \, (\thetav - \thetavs)-\AF(\tilde\etav_k-\etavs) 
	\bigr\}.
\label{UPupsnm}
\end{EQA}
To see this note first that with Lemma \ref{lem: norm bound is exact} \(\|\DP^{-1}\Pi_{\thetav}\DF\ups\|\le \|\DF^{-1}\DF\ups\|\). This gives by condition \( \bb{(\LL_{0})} \), Lemma \ref{lem: norm bound is exact} and Taylor expansion
\begin{EQA}
\sup_{(\thetav,\tilde\etav_{k})\in\Upss(\rr)}\|\E\UP(\thetav,\tilde\etav_{k})\|&\le& \sup_{\upsilonv\in\Upss(\rr)}\|\DP^{-1}\Pi_{\thetav}\Big(\nabla \E\LL(\upsilonv) - \nabla \E\LL(\upsilonvs)-\DF \, (\upsilonv - \upsilonvs)\Big)\|\\
  &\le& \sup_{\upsilonv\in\Upss(\rr)}\|\DP^{-1}\Pi_{\thetav}\DF\|\|\DF^{-1}\nabla^2 \E\LL(\upsilonv)^2\DF^{-1}-I_{\dimtotal}\|^{1/2}\rr\\
  &\le& \delta(\rr)\rr.
\end{EQA}
For the remainder note that again with Lemma \ref{lem: norm bound is exact} 
\begin{EQA}[c]
\|\DP^{-1}\Big(\nabla_{\thetav}\zeta(\upsilonv) - \nabla_{\thetav}\zeta(\upsilonvs)\Big)\Big\|\le\|\DF^{-1}\Big(\nabla\zeta(\upsilonv) - \nabla\zeta(\upsilonvs)\Big)\Big\|.
\end{EQA}
This yields that on \(\Omega(\xx)\)
\begin{EQA}
&&\nquad\sup_{(\thetav,\tilde\etav_{k})\in\Upss(\rr)}\Big\|\UP_{\thetav}(\thetav,\tilde\etav_{k})-\E\UP_{\thetav}(\thetav,\tilde\etav_{k})\Big\|\le \sup_{\upsilonv\in\Upss(\rr)}\Big\|\DP^{-1}\Big(\nabla_{\thetav}\zeta(\upsilonv) - \nabla_{\thetav}\zeta(\upsilonvs)\Big)\Big\|\\
 &\le& \sup_{\upsilonv\in\Upss(\rr)}\bigg\{\frac{1}{6 \nu_{1}  \omega }\|\UU(\upsilonv)\|\bigg\}6 \nu_{1}  \omega \le 6 \nu_{1}  \omega\big\{\zz_{Q}(\xx,4\dimtotal)+2\rr^2\big\}.
\end{EQA}
Using the same argument for \(\tilde\etav_k\) gives the claim.\\

2. We prove the apriori bound for the distance of the k. estimator to the oracle
\begin{EQA}
 \big\|\DF(\tilde\upsilonv_{k,k(+1)}-\upsilonvs)\big\|&\le&\rr_{k}^{(l+1)}.
\end{EQA}

To see this we first use the inequality
\begin{EQA}[c]
 \|\DF(\tilde\upsilonv_{k,k(+1)}-\upsilonvs)\|\le\sqrt{2}\|\DP(\tilde \thetav_k-\thetavs)\|+\sqrt{2}\|\HH(\tilde \etav_{k(+1)}-\etavs)\|.
\end{EQA}
Now we find with \eqref{eq: bound for sequence first step theta stat prop}
\begin{EQA}
\| \DP(\tilde{\thetav}_k - \thetavs)\|&\le&\|\DP^{-1}\nabla_{\thetav}\LL(\upsilonvs)\|+\|\DP^{-1}\AF(\tilde\etav_k-\etavs)\|+\|\bb{\tau}\big(\rr_k^{(l)}\big)\|\\
  &\le&\|\DP^{-1}\nabla_{\thetav}\LL(\upsilonvs)\|+\|\DP^{-1}\AF\HH^{-1}\|\|\HH(\tilde\etav_k-\etavs)\|+\|\bb{\tau}\big(\rr_k^{(l)}\big)\|.
\end{EQA}
Next we use that on \(\Omega(\xx)\)
\begin{EQA}
\|\DP^{-1}\AF\HH^{-1}\|\le \sqrt\corrDF,\, & \|\DP^{-1}\nabla_{\thetav}\LL(\upsilonvs)\|\le \zz(\xx),\, & \|\HH^{-1}\nabla_{\etav}\LL(\upsilonvs)\|\le \zz(\xx),
\end{EQA}
and
\begin{EQA}
 \|\HH(\tilde \etav_k-\etavs)\|&\le&\|\HH^{-1}\nabla_{\etav}\LL(\upsilonvs)\|+\|\HH^{-1}\AF^\T(\tilde{\thetav}_{k-1} - \thetavs)\|+\|\bb{\tau}\big(\rr_k^{(l)}\big)\|,
\end{EQA}
to derive the recursive formula
\begin{EQA}
\| \DP(\tilde{\thetav}_k - \thetavs)\|&\le&(1+\sqrt \corrDF)\left(\zz(\xx)+\|\bb{\tau}\big(\rr_k^{(l)}\big)\|\right)+\corrDF\| \DP(\tilde{\thetav}_{k-1} - \thetavs)\|.
\end{EQA}
Deriving the analogous formula for \(\|\HH(\tilde \etav_k-\etavs)\|\) and solving the recursion gives the claim.\\
\end{proof}

\begin{lemma}
\label{eq: bound for rrk in statististical convergence}
Assume the same as in Theorem \ref{theo: main Theorem}. Then we get
\begin{EQA}[c]
\Omega(\xx)\subseteq\bigcap_{k\in\N}\left\{ \ups_{k,k(+1)}\in\Upss\left(\rr_k^{(1)}\right)\right\},
\end{EQA}
where
\begin{EQA}[c]\label{eq: one step recursion rr bound}
\rr_k^{(1)}  \le 2\sqrt{2}(1-\sqrt\corrDF)^{-1}\left\{\left(\zz(\xx)+\Excgr_{Q}(\RR,\xx)\right)+(1+\sqrt \corrDF)\corrDF^{k}\RR(\xx)\right\}.
\end{EQA}
Further assume that \(\delta(\rr)/\rr\vee 12\nu_1\omega\le \eps\) and that
\eqref{eq: cond on eps with zz} and \eqref{eq: cond on eps with RR} are met with
\(\CONST(\corrDF)\) defined in \eqref{eq: def of CONST corrDF}. Then 
\begin{EQA}[c]
\Omega(\xx)\subseteq\bigcap_{k\in\N}\left\{ \ups_{k,k(+1)}\in\Upss(\rr_k^{*})\right\},
\end{EQA}
where
\begin{EQA}\label{eq: rrk after recursion}
\rr_k^{*}&\le&  \CONST(\corrDF)\left(\zz(\xx)+\eps \zz(\xx)^2\right)+\eps \frac{7^2\CONST(\corrDF)^4}{1-c(\eps,\zz(\xx))}\left(\frac{1}{1-\corrDF}\right)\left(\zz(\xx)+\eps \zz(\xx)^2\right)^{2}\\
	&&+\corrDF^k\left(\CONST(\corrDF)\RR+ \eps \frac{7^2\CONST(\corrDF)^4}{1-c(\eps,\RR)}\left(\frac{1}{\corrDF^{-1}-1}\right)\RR^2 \right).
\end{EQA}
\end{lemma}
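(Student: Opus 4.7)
I would prove the lemma in two stages, corresponding to its two conclusions. For the first bound on $\rr_k^{(1)}$, I would start by observing that the event $\Omega(\xx)$ by construction contains the events $\LCS_{k,k}\cap\LCS_{k,k+1}$, which already give $\|\DF(\tilde\ups_{k,k(+1)}-\upss)\|\le\RR(\xx)$ for every $k\in\N_0$. Thus the constant sequence $\rr_k^{(0)}:=\RR(\xx)$ is a valid zero-th order a priori bound, and Lemma \ref{lem: recursion for statistical properties} applies with $l=0$. Monotonicity of $\rr\mapsto\Excgr_Q(\rr,\xx)$ allows replacing each $\Excgr_Q(\rr_r^{(0)},\xx)$ by $\Excgr_Q(\RR,\xx)$, the geometric series $\sum_{r=0}^{k-1}\corrDF^r$ is bounded by $(1-\corrDF)^{-1}$, and the algebraic identity $(1+\sqrt\corrDF)/(1-\corrDF) = (1-\sqrt\corrDF)^{-1}$ compresses the spread contribution into the stated $\zz(\xx)+\Excgr_Q(\RR,\xx)$ block, yielding the first claim.

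\textbf{Second stage (contraction under $(A_3)$).} For the refined bound my plan is to iterate the recursion. The assumption $\delta(\rr)/\rr \vee 12\nu_1\omega\le\eps$ turns $\Excgr_Q$ into a quadratic in $\rr$:
\begin{EQA}[c]
\Excgr_Q(\rr,\xx)\le \eps\rr^2 + \frac{\eps}{2}\bigl(\zzQ(\xx,4\dimtotal)^2+2\rr^2\bigr) \le \CONST\,\eps\bigl(\rr^2+\zz(\xx)^2\bigr).
\end{EQA}
I would then iterate, writing $\rr_k^{(l+1)} \le A_k + T\sum_{r=0}^{k-1}\corrDF^r\Excgr_Q(\rr_r^{(l)},\xx)$ with $A_k:=S(\zz(\xx)+(1+\sqrt\corrDF)\corrDF^k\RR)$, $S=2\sqrt{2}(1-\sqrt\corrDF)^{-1}$ and $T=2\sqrt{2}(1+\sqrt\corrDF)$. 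At each level $l$ I would decompose $\rr_r^{(l)}$ into a stationary part $a^{(l)}$ (independent of $r$, of order $\CONST(\corrDF)\zz(\xx)$) and a decaying part $\corrDF^r b^{(l)}$ (of order $\CONST(\corrDF)\RR$). Squaring and evaluating the geometric series $\sum\corrDF^r$ and $\sum\corrDF^{2r}$ separately keeps this decomposition stable: the iterate $\rr_k^{(l+1)}$ is again of the form $a^{(l+1)}+\corrDF^k b^{(l+1)}$, and the induced map $(a^{(l)},b^{(l)})\mapsto(a^{(l+1)},b^{(l+1)})$ is an affine contraction with respective coordinate-wise ratios $c(\eps,\zz(\xx))$ and $c(\eps,\RR)$, both $<1$ by \eqref{eq: cond on eps with zz}--\eqref{eq: cond on eps with RR}. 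Passing to the fixed point then delivers the stated bound on $\rr_k^{\star}$.

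\textbf{Main obstacle.} The only genuine difficulty, and hence where I expect to spend most of the work, is the bookkeeping that preserves the split between the stationary $\zz$-part, which governs the statistical accuracy as $k\to\infty$, and the geometrically decaying $\corrDF^k\RR$-part, which drives convergence. Squaring $\rr_r^{(l)}$ unavoidably produces cross-terms of order $\corrDF^r\zz(\xx)\RR$, and one must check that after summing against $\corrDF^r$ these still fit under a $\corrDF^k$ envelope, multiplied by $\RR^2$ or $\zz(\xx)^2$ up to benign prefactors. The explicit constants $7^2\CONST(\corrDF)^4/(1-c(\eps,\cdot))$ appearing in $\rr_k^{\star}$ are then simply what emerges from pushing $S$, $T$ and the two geometric-series sums through one complete iteration before resolving the fixed-point equation in $l$.
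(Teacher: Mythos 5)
Your first stage is essentially the paper's own argument: on \(\Omega(\xx)\) the sets \(\LCS_{k,k}\cap\LCS_{k,k+1}\) give \(\rr_k^{(0)}\equiv\RR\), one application of Lemma \ref{lem: recursion for statistical properties} plus the geometric series and the identity \((1+\sqrt{\corrDF})/(1-\corrDF)=(1-\sqrt{\corrDF})^{-1}\) yield \eqref{eq: one step recursion rr bound}. Your second stage, however, is a genuinely different organization of the same quadratic iteration. The paper unrolls the recursion literally, introducing the nested sums \(A_{s,k}^{(l)}\) with exponent towers \(2^{s}\), proving the induction claim \eqref{eq: induction claim}, summing the resulting series in \(s\) under \eqref{eq: cond on eps with zz}--\eqref{eq: cond on eps with RR}, and letting \(l\to\infty\), with the residual of the initial condition surviving as a separate term that is shown to tend to zero. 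You instead stabilize the two-parameter ansatz \(\rr_k^{(l)}\le a^{(l)}+\corrDF^{k}b^{(l)}\) and study the induced recursions for \(a^{(l)}\) and \(b^{(l)}\); with the factor-two splitting of the cross terms this decouples cleanly (the decaying part stays under a \(\corrDF^{k}\) envelope since \(\sum_{r=0}^{k-1}\corrDF^{r+2(k-r)}\le \corrDF^{k}/(\corrDF^{-1}-1)\)), and it gives a more transparent proof, at the price of re-deriving the explicit constants \(7^{2}\CONST(\corrDF)^{4}/(1-c(\eps,\cdot))\) from your own bookkeeping.

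Two gaps remain. First, the induced maps are of the form \(a\mapsto \alpha+\mu a^{2}\) and \(b\mapsto \beta+\mu' b^{2}\), i.e.\ quadratic rather than affine; existence of and convergence to the \emph{smaller} fixed point is not automatic and must be argued (monotonicity of the map together with the smallness conditions, and a check that \(\mu\alpha\), \(\mu'\beta\) are small enough relative to \(c(\eps,\zz(\xx))\), \(c(\eps,\RR)\) so that the claimed constants are not exceeded). Second, and more substantively, the initial data couples the coordinates in a way your sketch does not address: \(\rr_k^{(0)}\equiv\RR\) carries no \(\corrDF^{k}\) decay, so after one step the stationary coordinate contains \(\Excgr_{Q}(\RR,\xx)\approx \eps(\RR^{2}+\zz(\xx)^{2})\), i.e.\ an \(\eps\RR^{2}/(1-\corrDF)\)-sized contribution, while the limit bound \eqref{eq: rrk after recursion} has a stationary part that is free of \(\RR\). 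You therefore must show that the \(a\)-recursion forgets this contaminated starting value, i.e.\ that \(a^{(1)}\) lies in the basin of attraction of the small fixed point; this step requires \eqref{eq: cond on eps with RR} and not only \eqref{eq: cond on eps with zz}, and it is the exact analogue of the paper's final residual term \(\prod_{r=0}^{l-1}\beta_{r}\,\RR^{2^{l}}(\corrDF^{-1}-1)^{-(2^{l}-1)}\to 0\). Once these two verifications are added, your fixed-point route delivers the lemma.
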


\begin{proof}
We proof this claim via induction. On \(\Omega(\xx)\) we have
\begin{EQA}
\ups_{k,k(+1)}\in\Upss(\RR),&& \text{ set }\rr_{k}^{(0)}\eqdef \RR.
\end{EQA}
Now with Lemma \ref{lem: recursion for statistical properties} we find that 
\begin{EQA}
\Omega(\xx)\subseteq\bigcap_{k\in\N}\left\{ \ups_{k,k(+1)}\in\Upss(\rr_k^{(l)})\right\}& \text{ implies } &
\Omega(\xx)\subseteq\bigcap_{k\in\N}\left\{ \ups_{k,k(+1)}\in\Upss(\rr_k^{(l+1)})\right\},	
\end{EQA}
where
\begin{EQA}
\rr_{k}^{(l)}&\le&   2\sqrt{2}(1-\sqrt\corrDF)^{-1}\left(\zz(\xx)+(1+\sqrt \corrDF)\corrDF^{k}\RR(\xx)\right)\\
 	&&+2\sqrt{2}(1+\sqrt{\corrDF})\sum_{r=0}^{k-1}\corrDF^{r}\Excgr_{Q}\left(\rr_r^{(l-1)},\xx\right).
\end{EQA}
Setting \(l=1\) this gives
\begin{EQA}
\rr_{k}^{(1)}&\le&   2\sqrt{2}(1-\sqrt\corrDF)^{-1}\left\{\left(\zz(\xx)+\Excgr_{Q}(\RR,\xx)\right)+(1+\sqrt \corrDF)\corrDF^{k}\RR(\xx)\right\},
\end{EQA}
which gives \eqref{eq: one step recursion rr bound}. For the second claim we show that
\begin{EQA}[c]
\Omega(\xx)\subseteq\bigcap_{k\in\N}\left\{ \ups_{k,k(+1)}\in\Upss\left(\limsup_{l\to\infty}\rr_k^{(l)}\right)\right\}\subseteq\bigcap_{k\in\N} \left\{ \ups_{k,k(+1)}\in\Upss(\rr_k^{*})\right\}.
\end{EQA}
So we have to show that \(\limsup_{l\to\infty}\rr_k^{(l)}\le \rr_k^{*}\) from \eqref{eq: rrk after recursion}. For this we use \(\delta(\rr)/\rr\vee 12\nu_1\omega\le \eps\) to estimate further
\begin{EQA}
\rr_{k}^{(l)}&\le& 2\sqrt{2}(1-\sqrt\corrDF)^{-1}\left(\zz(\xx)+(1+\sqrt \corrDF)\corrDF^{k}\RR(\xx)\right)\\
 	&&+2\sqrt{2}(1+\sqrt{\corrDF})\eps \sum_{r=0}^{k-1}\corrDF^{r}\left(\big(\rr_{k-r}^{(l-1)}\big)^2+ \zz(\xx)^2\right)\\
 	&\le&   2\sqrt{2}(1-\sqrt\corrDF)^{-1}\left(\zz(\xx)+\eps \zz(\xx)^2+(1+\sqrt \corrDF)\corrDF^{k}\RR(\xx)\right)\\
 	&&+2\sqrt{2}(1+\sqrt{\corrDF})\eps \sum_{r=0}^{k-1}\corrDF^{r}\big(\rr_{k-r}^{(l-1)}\big)^2\\
 	&\le&\CONST(\corrDF)\left\{\left(\zz(\xx)+\eps \zz(\xx)^2\right)+\corrDF^{k}\RR+	\eps \sum_{r=0}^{k-1}\corrDF^{r}\big(\rr_{k-r}^{(l-1)}\big)^2\right\},
\end{EQA}
where \(\CONST(\corrDF)>0\) is defined in \eqref{eq: def of CONST corrDF}. We set
\begin{EQA}
A_{s,k}^{(l)}&\eqdef&  \sum_{r_1=0}^{k-1}\corrDF^{r_1} \left(\sum_{r_2=0}^{k-r_1-1}\corrDF^{r_2}\left(\ldots\sum_{r_s=0}^{k-r_1-\ldots-r_{s-1}-1}\corrDF^{r_{s}}\big(\rr_{k-r_1-\ldots-r_{s}}^{(l-1)}\big)^2\ldots\right)^2\right)^2.
\end{EQA}
Claim
\begin{EQA}
\label{eq: induction claim}
A_{s,k}^{(l)}&\le&7^{\sum_{t=0}^{s-1}2^{t}}\CONST(\corrDF)^{2^s}\left\{\left(\frac{1}{1-\corrDF}\right)^{\sum_{t=0}^{s-1}2^{t}}\left(\zz(\xx)+\eps \zz(\xx)^2\right)^{2^s}\right.\\
	&&\text{\phantom{\(7^{\sum_{t=0}^{s-1}2^{t}}\CONST(\corrDF)^{2^s}\)}}\left.+\corrDF^{k}\left(\frac{1}{\corrDF^{-1}-1}\right)^{\sum_{t=0}^{s-1}2^{t}}\RR^{2^s}\right\}\\
	&&+7^{\sum_{t=0}^{s-1}2^{t}}(\CONST(\corrDF)\eps)^{2^{s}} A_{s+1,k}^{(l-1)}.
\end{EQA}
We proof this claim via induction. Clearly
\begin{EQA}
A_{1,k}^{(l)}&=&\sum_{r_1=0}^{k-1}\corrDF^{r_1}\big(\rr_{k-r_1}^{(l-1)}\big)^2\le  7\CONST(\corrDF)^2\sum_{r_1=0}^{k-1}\corrDF^{r_1}\left\{\left(\zz(\xx)+\eps \zz(\xx)^2\right)^2+\corrDF^{2(k-r_1)}\RR^2\right\}\\
	&&+7\CONST(\corrDF)^2 \eps^2\sum_{r_1=0}^{k-1}\corrDF^{r_1} \left(\sum_{r_2=0}^{k-r_1-r_2-1}\corrDF^{r_2}\big(\rr_{k-r_1-r_2}^{(l-2)}\big)^2\right)^2\\
	&\le&7\CONST(\corrDF)^2 \left\{\frac{1}{1-\corrDF}\left(\zz(\xx)+\eps \zz(\xx)^2\right)^2+\frac{\corrDF^k}{\corrDF^{-1}-1}\RR^2\right\}\\
	&&+7\CONST(\corrDF)^2\eps^2A_{2,k}^{(l-1)}.
\end{EQA}
Further
\begin{EQA}
A_{s,k}^{(l)}&\eqdef&  \sum_{r_1=0}^{k-1}\corrDF^{r_1} \left(\sum_{r_2=0}^{k-r_1-1}\corrDF^{r_2}\left(\ldots\sum_{r_s=0}^{k-r_1-\ldots-r_{s-1}-1}\corrDF^{r_{s}}\big(\rr_{k-r_1-\ldots-r_{s}}^{(l-1)}\big)^2\ldots\right)^2\right)^2\\
	&=&\sum_{r_1=0}^{k-1}\corrDF^{r_1} \left(A_{s-1,k-r_1}^{(l)} \right)^2.\label{eq: representation for A s k l}
\end{EQA}
Plugging in \eqref{eq: induction claim} we get for \(s\ge 2\)
\begin{EQA}
A_{s,k}^{(l)}&\le& \sum_{r_1=0}^{k-1}\corrDF^{r_1}\left(7^{\sum_{t=0}^{s-2}2^{t}}\CONST(\corrDF)^{2^{s-1}}\left\{\left(\frac{1}{1-\corrDF}\right)^{\sum_{t=0}^{s-2}2^{t}}\left(\zz(\xx)+\eps \zz(\xx)^2\right)^{2^{s-1}}\right.\right.\\	&&\text{\phantom{\(\sum_{r_1=0}^{k-1}\corrDF^{r_1}7^{\sum_{t=0}^{s-2}2^{t}}\CONST(\corrDF)^{2^{s-1}}\)}}\left.\left.+\corrDF^{k}\left(\frac{1}{\corrDF^{-1}-1}\right)^{\sum_{t=0}^{s-2}2^{t}}\RR^{2^{s-1}}\right\}\right.\\
	&&\left.\text{\phantom{\(\sum_{r_1=0}^{k-1}\)}}+7^{\sum_{t=0}^{s-2}2^{t}}(\CONST(\corrDF)\eps)^{2^{s-1}} A_{s,k-r_1}^{(l-1)}\right)^2.
	\end{EQA}
Shifting the index this gives
\begin{EQA}
A_{s,k}^{(l)}	&\le& 7\sum_{r_1=0}^{k-1}\corrDF^{r_1}\left(7^{\sum_{t=1}^{s-1}2^{t}}\CONST(\corrDF)^{2^{s}}\left\{\left(\frac{1}{1-\corrDF}\right)^{\sum_{t=1}^{s-1}2^{t-1}}\left(\zz(\xx)+\eps \zz(\xx)^2\right)^{2^{s}}\right.\right.\\	&&\text{\phantom{\(\sum_{r_1=0}^{k-1}\corrDF^{r_1}7^{\sum_{t=1}^{s}2^{t-1}}\CONST(\corrDF)^{2^{s}}\)}}\left.\left.+\corrDF^{k}\left(\frac{1}{\corrDF^{-1}-1}\right)^{\sum_{t=1}^{s-1}2^{t}}\RR^{2^{s}}\right\}\right.\\
	&&\left.\text{\phantom{\(\sum_{r_1=0}^{k-1}\)}}+7^{\sum_{t=1}^{s-1}2^{t}}(\CONST(\corrDF)\eps)^{2^{s}} (A_{s,k-r_1}^{(l-1)})^2\right).
	\end{EQA}
Direct calculation then leads to
\begin{EQA}
A_{s,k}^{(l)}	&\le& 7^{\sum_{t=0}^{s-1}2^{t}}\CONST(\corrDF)^{2^s}\left\{\left(\frac{1}{1-\corrDF}\right)^{\sum_{t=0}^{s-1}2^{t}}\left(\zz(\xx)+\eps \zz(\xx)^2\right)^{2^s}\right.\\
	&&\text{\phantom{\(7^{\sum_{t=0}^{s-1}2^{t}}\CONST(\corrDF)^{2^s}\)}}\left.+\corrDF^{k}\left(\frac{1}{\corrDF^{-1}-1}\right)^{\sum_{t=0}^{s-1}2^{t}}\RR^{2^s}\right\}\\
	&&+7^{\sum_{t=0}^{s-1}2^{t}}(\CONST(\corrDF)\eps)^{2^{s}} \sum_{r_1=0}^{k-1}\corrDF^{r_1}(A_{s,k-r_1}^{(l-1)})^2,
\end{EQA}
which gives \eqref{eq: induction claim} with \eqref{eq: representation for A s k l}. Similarly we can prove
\begin{EQA}[c]
A_{s,k}^{(1)}=\left( \frac{1}{1-\corrDF}\right)^{2^{s}-1}\RR^{2^{s}}.
\end{EQA}
Abbreviate
\begin{EQA}
\lambda_s\eqdef  7^{2^s-1}\CONST(\corrDF)^{2^s}, &&
\beta_s\eqdef  7^{2^s-1}(\CONST(\corrDF)\eps)^{2^{s}},  \\
\zz_s(\xx)\eqdef\left(\frac{1}{1-\corrDF}\right)^{2^s-1}\left(\zz(\xx)+\eps \zz(\xx)^2\right)^{2^s}, &&
\text{R}_s\eqdef \left(\frac{1}{\corrDF^{-1}-1}\right)^{2^s-1}\RR^{2^s}.
\end{EQA}
Then
\begin{EQA}
\rr_{k}^{(l)}&\le& \CONST(\corrDF)\left\{\left(\zz(\xx)+\eps \zz(\xx)^2\right)+\corrDF^{k}\RR+	\eps A_{1,k}^{(l)}\right\}\\
	&\le& \sum_{s=0}^{l-1}\lambda_s\prod_{r=0}^{s-1}\beta_r \zz_s(\xx)+\corrDF^k\sum_{s=0}^{l-1}\lambda_s\prod_{r=0}^{s-1}\beta_r\text{R}_s +\prod_{r=0}^{l-1}\beta_r \text{R}_{l}.
	\label{eq: bound after recursion}
\end{EQA}
We estimate further
\begin{EQA}
&&\nquad\sum_{s=0}^{l-1}\lambda_s\prod_{r=0}^{s-1}\beta_r \zz_s(\xx)-\CONST(\corrDF)\left(\zz(\xx)+\eps \zz(\xx)^2\right)=\sum_{s=1}^{l-1}\lambda_s\prod_{r=0}^{s-1}\beta_r \zz_s(\xx)\\
&\le&\sum_{s=1}^{l-1}7^{2^{s}}\CONST(\corrDF)^{2^{s+1}}\eps^{2^{s}-1}\left(\frac{1}{1-\corrDF}\right)^{2^s-1}\left(\zz(\xx)+\eps \zz(\xx)^2\right)^{2^s}\\
 &=&\eps 7^2\CONST(\corrDF)^4\left(\frac{1}{1-\corrDF}\right)\left(\zz(\xx)+\eps \zz(\xx)^2\right)^{2}\sum_{s=1}^{l-1}\left(\eps 7\CONST(\corrDF)\frac{1}{1-\corrDF}\left(\zz(\xx)+\eps \zz(\xx)^2\right)\right)^{2^s-1}.
\end{EQA}
Assuming \eqref{eq: cond on eps with zz} this gives
\begin{EQA}
\sum_{s=0}^{l-1}\lambda_s\prod_{r=0}^{s-1}\beta_r \zz_s(\xx)&\le& \CONST(\corrDF)\left(\zz(\xx)+\eps \zz(\xx)^2\right)\\
	&&+\eps \frac{7^2\CONST(\corrDF)^4}{1-c(\eps,\zz(\xx))}\left(\frac{1}{1-\corrDF}\right)\left(\zz(\xx)+\eps \zz(\xx)^2\right)^{2}.
\end{EQA}
With the same argument we find under \eqref{eq: cond on eps with RR} that
\begin{EQA}[c]
\corrDF^k\sum_{s=0}^{l-1}\lambda_s\prod_{r=0}^{s-1}\beta_r\text{R}_s\le \corrDF^k\left(\CONST(\corrDF)\RR+ \eps \frac{7^2\CONST(\corrDF)^4}{1-c(\eps,\RR)}\left(\frac{1}{\corrDF^{-1}-1}\right)\RR^2 \right).
\end{EQA}
Additionally \eqref{eq: cond on eps with RR} implies
\begin{EQA}[c]
\prod_{r=0}^{l-1}\beta_r \text{R}_{l}\le \left(\eps 7\CONST(\corrDF)\frac{1}{\corrDF^{-1}-1}\right)^{2^{l-1}}\RR^{2^l}\to 0.
\end{EQA}
Plugging these bounds into \eqref{eq: bound after recursion} and letting \(l\to\infty\) gives the claim.
\end{proof}

\subsection{Result after convergence}
In the previous section we showed that
\begin{EQA}
\Omega(\xx)&\subset& \bigcap_{\rr\le4\RR(\xx)}\bigg\{\sup_{\upsilonv\in\Upss(\rr)}\left\{\frac{1}{6  \breve\omega\breve\nu_1 }\|\breve\UU(\upsilonv)\|-2\rr^2\right\}\le  \zzQ(\xx,2\dimtotal+2\dimp)^2 \bigg\}\\
	&&\cap \bigcap_{k\in\N}\left\{\ups_{k,k}\in\Upss\left(\rr^{(\cdot)}_{k}\right),\, \ups_{k,k+1}\in\Upss\left(\rr^{(\cdot)}_{k}\right)\right\}\cap\{\tilde{\upsilonv},\tilde{\upsilonv}_{\thetavs}\in \Upss(\rups)\},
\end{EQA}
where \(\rr^{(\cdot)}_{k}\) is defined in \eqref{eq: rrk after recursion} or \eqref{eq: one step recursion rr bound}. The claim of Theorem \ref{theo: main Theorem} follows with the following lemma:

\begin{lemma}
\label{lem: main theo local}
Assume 
\({(\breve\CS \DF_{1})} \), \({(\breve\LL_{0})}\), and \({(\AssId)}\) with 
a central point \(\upsilonvd =\upsilonvs\) 
and \( \DFc^{2}=\nabla^2\E\LL(\upss) \). Then it holds on \( \Omega(\xx) \subseteq \Omega\) that for all \(k\in\N\)
\begin{EQA}
	\bigl\| 
        \DPr \bigl( \tilde{\thetav}_{k} - \thetavs \bigr) 
        - \xivr 
    \bigr\|
    &\le& 
    \breve\Excgr_{Q}(\rr_{k},\xx) ,
\label{eq: Fisher in main lem loc}
	\\
    \bigl| 2 \Lr(\tilde{\thetav}_{k},\thetavs) - \| \xivr \|^{2} \bigr|
    &\le&
     8\left(\|\DPr^{-1}\scorer\|+\breve\Excgr_{Q}(\rr_{k},\xx)\right)\breve\Excgr_{Q}(2(1+\corrDF)\rr_{k},\xx)\\
     	&&+ \breve\Excgr_{Q}(\rr_{k},\xx)^2,
\label{eq: Wilks in main lem loc}
\end{EQA}
where the spread \(\breve \Excgr(\rr,\xx) \) is defined in \eqref{eq: def of breve diamond rr} and where
\begin{EQA}[c]
\rr_k\eqdef \rr^{(\cdot)}_{k}\vee \rups.
\end{EQA}
\end{lemma}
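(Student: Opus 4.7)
The plan is to mimic the proof of Theorem 2.2 in \cite{AASP2013} for the profile M-estimator, replacing the single first-order condition $\nabla\LL(\tilde\ups)=0$ enjoyed by the pME with the pair of partial first-order conditions $\nabla_{\thetav}\LL(\tilde\ups_{k,k})=0$ and $\nabla_{\etav}\LL(\tilde\ups_{k,k+1})=0$ that the alternating algorithm produces. All arguments take place on $\Omega(\xx)$: by construction $\tilde\ups_{k,k(+1)}\in\Upss(\rr_k)$ and the uniform bounds on the semiparametric and parametric stochastic gradient gaps $\breve\UU$ and $\UU$ are available on every $\Upss(\rr)$ with $\rr\le 4\RR(\xx)$, which is precisely what a Taylor/stochastic-linearization argument needs.

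First I would combine condition $(\breve\LL_0)$ (which yields the deterministic part $\|\DPr^{-1}\{\scorer_{\thetav}\E\LL(\upsilonv)-\scorer_{\thetav}\E\LL(\upsilonvs)\}-\DPr(\thetav-\thetavs)\|\le\breve\rddelta(\rr)\rr$ on $\Upss(\rr)$) with the $\breve\UU$-bound built into $\LCS(\nabla)$ to get the nonasymptotic local linear expansion of the efficient score
\[
\bigl\|\DPr^{-1}\{\scorer_{\thetav}\LL(\upsilonv)-\scorer_{\thetav}\LL(\upsilonvs)\}+\DPr(\thetav-\thetavs)\bigr\|\;\lesssim\;\breve\Excgr_{Q}(\rr_k,\xx),\qquad \upsilonv\in\Upss(\rr_k),
\]
exactly as in Lemma B.1 and Theorem 2.2 of \cite{AASP2013}. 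In parallel, $(\LL_0)$, $(\CS\DF_1)$ and the $\UU$-bound in $\LCS(\nabla)$ give the analogous parametric approximation of $\DF^{-1}\{\nabla\LL(\upsilonv)-\nabla\LL(\upsilonvs)\}+\DF(\upsilonv-\upsilonvs)$, which I will use to move between the two half-step iterates.

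Next I specialise the efficient-score expansion to $\upsilonv=\tilde\ups_{k,k+1}$. Because $\nabla_{\etav}\LL(\tilde\ups_{k,k+1})=0$, one has $\scorer_{\thetav}\LL(\tilde\ups_{k,k+1})=\nabla_{\thetav}\LL(\tilde\ups_{k,k+1})$, and because $\nabla_{\thetav}\LL(\tilde\ups_{k,k})=0$, this equals $\nabla_{\thetav}\LL(\tilde\ups_{k,k+1})-\nabla_{\thetav}\LL(\tilde\ups_{k,k})$. The parametric linear approximation applied at the two points $\tilde\ups_{k,k},\tilde\ups_{k,k+1}\in\Upss(\rr_k)$ expresses this increment as $A(\tilde\etav_{k+1}-\tilde\etav_k)$ modulo a remainder of order $\breve\Excgr_{Q}(\rr_k,\xx)$, and applied to $\nabla_{\etav}$ at the pair $\tilde\ups_{k-1,k},\tilde\ups_{k,k+1}$ (where $\nabla_{\etav}$ vanishes at both ends) it ties $\HH(\tilde\etav_{k+1}-\tilde\etav_k)$ to $A^{\T}(\tilde\thetav_k-\tilde\thetav_{k-1})$. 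The identifiability bound $\|\DPr^{-1}A\HH^{-1}\|\le\sqrt{\corrDF/(1-\corrDF)}$ then converts the chain into a geometric contraction which, together with the shrinking radii $\rr_k$ coming from Lemma \ref{lem: recursion for statistical properties}, absorbs the residual into an error of order $\breve\Excgr_{Q}(\rr_k,\xx)$ and delivers \eqref{eq: Fisher in main lem loc}.

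For the Wilks bound I would write $2\breve\LL(\tilde\thetav_k,\thetavs)=2\{\LL(\tilde\ups_{k,k+1})-\LL(\tilde\ups_{\thetavs})\}$, using that $\tilde\ups_{k,k+1}$ is the $\etav$-profile at $\tilde\thetav_k$, and Taylor-expand $\LL$ to second order around $\upsilonvs$ along the segment from $\tilde\ups_{\thetavs}$ to $\tilde\ups_{k,k+1}$. This segment lies in $\Upss(2(1+\corrDF)\rr_k)$ because $\tilde\ups_{\thetavs}\in\Upss(\rups)\subset\Upss(\rr_k)$ on $\Omega(\xx)$ and the profile path is stretched by at most $(1+\corrDF)$. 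Applying the algebraic identity $\|a\|^2-\|a-b\|^2=2\langle a,b\rangle-\|b\|^2$ with $a=\xivr$ and $b=\DPr(\tilde\thetav_k-\thetavs)$, the cross term is controlled by the Fisher bound \eqref{eq: Fisher in main lem loc} and the quadratic remainder by $\breve\Excgr_{Q}(\rr_k,\xx)^{2}$ and a second spread at the enlarged radius $2(1+\corrDF)\rr_k$, which reproduces \eqref{eq: Wilks in main lem loc}. The main obstacle is the second step: unlike the profile case, the efficient score does not vanish at any single alternating iterate, so the proof has to stitch together the two half-steps and exploit $\sqrt{\corrDF}<1$ to absorb the residual $A(\tilde\etav_{k+1}-\tilde\etav_k)$; the fact that both $\tilde\ups_{k,k}$ and $\tilde\ups_{k,k+1}$ live in the same $\Upss(\rr_k)$ on $\Omega(\xx)$, ensured by Lemma \ref{lem: recursion for statistical properties}, is what makes the uniform expansions sharp enough for this absorption.
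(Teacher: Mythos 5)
Your route genuinely departs from the paper's at the decisive step, and as sketched it does not prove the stated bound. The paper's proof never stitches the two half-steps: it introduces \(l(\thetav_1,\thetav_2,\etav)\eqdef\LL\bigl(\thetav_1,\etav+\HH^{-2}\AF^{\T}(\thetav_2-\thetav_1)\bigr)\), notes \(\nabla_{\thetav_1}l(\thetav_1,\thetav_2,\etav)=\scorer_{\thetav}\LL\bigl(\thetav_1,\etav+\HH^{-2}\AF^{\T}(\thetav_2-\thetav_1)\bigr)\) and claims that \(\tilde\thetav_{k}\) maximizes \(\thetav\mapsto l(\thetav,\tilde\thetav_{k},\tilde\etav_{k})\), hence \(\scorer_{\thetav}\LL(\tilde\thetav_{k},\tilde\etav_{k})=0\). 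With this identity \(\DPr(\tilde\thetav_k-\thetavs)-\xivr=\DPr^{-1}\{\scorer_{\thetav}\LL(\tilde\ups_{k,k})-\scorer_{\thetav}\LL(\upss)\}+\DPr(\tilde\thetav_k-\thetavs)\) is exactly the remainder treated in Theorem 2.2 of \cite{AASP2013}, and it is bounded by \(\breve\Excgr_{Q}(\rr_k,\xx)\) through \((\breve\LL_{0})\) and the \(\breve\UU\)-bound encoded in \(\LCS(\nabla)\); the Wilks part is then obtained from the two-sided sandwich \(l(\tilde\thetav_k,\thetavs,\tilde\etav_{\thetavs})-l(\thetavs,\thetavs,\tilde\etav_{\thetavs})\le\Lr(\tilde\thetav_k)-\Lr(\thetavs)\le l(\tilde\thetav_k,\tilde\thetav_k,\tilde\etav_{k+1})-l(\thetavs,\tilde\thetav_k,\tilde\etav_{k+1})\), not from a single Taylor expansion along one segment. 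So the ``main obstacle'' you name is precisely what the paper's reparametrization is designed to remove.

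The gap in your alternative is the absorption step. Your stitching replaces the non-vanishing efficient score by (up to spread-size errors) \(\DPr^{-1}\AF(\tilde\etav_{k+1}-\tilde\etav_k)\), equivalently \(\DPr^{-1}\AF\HH^{-2}\AF^{\T}(\tilde\thetav_k-\tilde\thetav_{k-1})\), whose norm is of order \(\corrDF\,\|\DP(\tilde\thetav_k-\tilde\thetav_{k-1})\|\) up to a factor \((1-\corrDF)^{-1/2}\). But the only control of this increment available from Lemma \ref{lem: recursion for statistical properties} on \(\Omega(\xx)\) is \(\|\DP(\tilde\thetav_k-\tilde\thetav_{k-1})\|\lesssim \rr_{k-1}\approx\CONST\bigl(\zz(\xx)+\corrDF^{k-1}\RR\bigr)\), a quantity carrying no small factor \(\breve\rddelta(\rr)\) or \(\breve\omega\); multiplying it by \(\corrDF<1\) does not turn it into \(\breve\Excgr_{Q}(\rr_k,\xx)\). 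The exactly quadratic model makes this concrete: there \(\breve\rddelta\equiv0\) and \(\breve\omega=0\), so \(\breve\Excgr_{Q}\equiv0\), while \(\DPr^{-1}\AF\HH^{-2}\AF^{\T}(\tilde\thetav_k-\tilde\thetav_{k-1})\) is generically of order \(\corrDF^{k}\RR\neq0\); no contraction argument can place this term below the spread, so your scheme yields at best \(\breve\Excgr_{Q}(\rr_k,\xx)+\CONST\,\corrDF^{k}\RR\) rather than the lemma's bound. To reproduce the lemma as stated you must either justify the paper's identity \(\scorer_{\thetav}\LL(\tilde\ups_{k,k})=0\) (your doubt about it is not unreasonable, since only \(\nabla_{\thetav}\LL\) vanishes at \(\tilde\ups_{k,k}\), and this identity is the entire content of the paper's Fisher step) or accept an additional geometric term in the conclusion; the cross-term bookkeeping you describe cannot bridge that difference.
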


\begin{proof}
The proof is nearly the same as that of Theorem 2.2 of \cite{AASP2013} which is inspired by the proof of Theorem 1 of \cite{MuVa1999}. So we only sketch it and refer the reader to \cite{AASP2013} for the skipped arguments. We define
\begin{EQA}
\label{eq: def of l function}
l: \R^{\dimp}\times \Ups\to \R, && (\thetav_1,\thetav_2,\etav)\mapsto \LL(\thetav_1, \etav+\HH^{-2}\AF^\T(\thetav_2-\thetav_1)).
\end{EQA}
Note that
\begin{EQA}
\nabla_{\thetav_1}l(\thetav_1,\thetav_2,\etav)=\scorer_{\thetav}\LL(\thetav_1, \etav+\HH^{-2}\AF^\T(\thetav_2-\thetav_1)),\,&& \tilde \thetav_{k}=\argmax_{\thetav} l(\thetav,\tilde \thetav_{k},\tilde \etav_{k}),
\end{EQA}
such that \(\scorer_{\thetav}\LL(\tilde \thetav_{k},\tilde \etav_{k})=0\). This gives
\begin{EQA}[c]
\bigl\| 
        \DPr \bigl( \tilde{\thetav}_{k} - \thetavs \bigr) 
        - \xivr 
    \bigr\|=\bigl\|\DPr^{-1} \scorer \LL(\tilde \thetav_{k},\tilde \etav_{k}) - \DPr^{-1} \scorer \LL(\upss)+ 
        \DPr \bigl( \tilde{\thetav}_{k} - \thetavs \bigr)    
    \bigr\|.
\end{EQA}
Now the right hand side can be bounded just as in the proof of Theorem 2.2 of \cite{AASP2013}. This gives \eqref{eq: Fisher in main lem loc}.

 For \eqref{eq: Wilks in main lem loc} we can represent:
\begin{EQA}[c]
\Lr(\tilde \thetav_{k})-\Lr(\thetavs)=l(\tilde \thetav_{k},\tilde \thetav_{k},\tilde \etav_{k+1})-l(\thetavs,\thetavs,\tilde \etav_{\thetavs}),
\end{EQA}
where 
\begin{EQA}[c]
\tilde \etav_{\thetavs}\eqdef \Pi_{\etav}\argmax_{\substack{\ups\in\Ups,\\ \Pi_{\thetav}\ups=\thetavs}}\LL(\ups).
\end{EQA}
Due to the definition of \(\tilde\thetav_k\) and \(\tilde\etav_{k+1}\)
\begin{EQA}[c]
l(\tilde \thetav_{k},\thetavs,\tilde \etav_{\thetavs})-l(\thetavs,\thetavs,\tilde \etav_{\thetavs})\le \Lr(\tilde \thetav_{k})-\Lr(\thetavs)\le l(\tilde \thetav_{k},\tilde \thetav_{k},\tilde \etav_{k+1})-l(\thetavs,\tilde \thetav_{k},\tilde \etav_{k+1}).
\end{EQA}
Again the remaining steps are exactly the same as in the proof of Theorem 2.2 of \cite{AASP2013}.

\end{proof}

\section{Proof of Corollary \ref{cor: approxmiation quality of algernating sequence}}
\begin{proof}
Note that with the argument of Section \ref{sec: prob of des set} \(\P(\Omega'(\xx))\ge 1-8\ex^{-\xx}-\beta_{(\mathbf A)}\) where with \(\Omega(\xx)\) from \eqref{eq: defining desirable set}
\begin{EQA}[c]
\Omega'(\xx)=\Omega(\xx)\cap\{\tilde \ups\in\Upss(\rups)\}.
\end{EQA}
On \(\Omega'(\xx)\) it holds due to Theorem \ref{theo: main Theorem} and due to Theorem 2.1 of \cite{AASP2013}
\begin{EQA}
\|\DPr(\tilde \thetav_{k}-\thetavs)-\xivr\|\le \breve\Excgr_{Q}(\rr_{k},\xx), && \|\DPr(\tilde \thetav-\thetavs)-\xivr\|\le \breve\Excgr(\rups,\xx).
\end{EQA}
Now the claim follows with the triangular inequality.
\end{proof}

\section{Proof of Theorem \ref{theo: convergence to MLE}} 
We prove this Theorem in a similar manner to the convergence result in Lemma \ref{lem: recursion for statistical properties}.
Redefine the set \(\Omega(\xx)\)
\begin{EQA}
\label{eq: re defining desirable set}
\Omega(\xx)&\eqdef&\bigcap_{k=0}^{K}(\LCS_{k,k}\cap\LCS_{k,k+1})\cap\LCS(\nabla)\cap\{\LL(\tilde\ups_0,\upss)\ge -\KL(\xx)\},\text{ where }\\
\LCS_{k,k(+1)}&=&\Big\{\|\DF(\tilde \upsilonv_{k,k(+1)}-\upsilonvs)\|\le\RR(\xx),\,\|\DP(\tilde\thetav_{k}-\thetavs)\|\le\RR(\xx),\\
  &&\|\HH(\tilde \etav_{k(+1)}-\etavs)\|\le\RR(\xx)\Big\},\\
\LCS(\nabla)&=&\left\{\sup_{\ups \in \Upss(\RR(\xx))} \| \UU(\nabla^2)(\ups) \|\le  9\nu_{2} \omega_2 \zzq(\xx,6\dimtotal)\RR(\xx)\right\}\\
  &&\cap\{\|\DF^{-1}\nabla^2\zetav(\upss)\|\le \zz(\xx,\nabla^2\zetav(\upss))\}.
\end{EQA}
where
\begin{EQA}[c]
\UU(\nabla^2)(\ups)\eqdef \DF^{-1}\left(\nabla^2\zetav(\ups)-\nabla^2\zetav(\upss)\right)\in\R^{{\dimtotal}^2}.
\end{EQA}

We see that on \(\Omega(\xx)\)
\begin{EQA}[c]
\ups_{k,k(+1)}\in \tilde{\Upss}(\RR)\eqdef \{\|\DF(\ups-\tilde\ups)\|\le \RR+\rups\}\cap \Upss(\RR) .
\end{EQA}

\begin{lemma}
\label{lem: prob of second set omega}
Under the conditions of Theorem \ref{theo: convergence to MLE}
\begin{EQA}[c]
\P(\Omega(\xx))\ge 1-3\ex^{-\xx}-\beta_{(\mathbf A)}.
\end{EQA}
\end{lemma}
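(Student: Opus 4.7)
The plan is to bound $\P(\Omega(\xx)^c)$ by decomposing $\Omega(\xx)^c$ into four events, one for each of the constituent intersections, and to bound each of them separately. Writing
\begin{EQA}
\Omega(\xx)^c &\subseteq& \{\LL(\tilde\ups_0,\upss) < -\KL(\xx)\} \cup \biggl(\bigcup_{k=0}^{K}(\LCS_{k,k}^c\cup\LCS_{k,k+1}^c)\biggr) \\
&& {} \cup \Bigl\{\sup_{\ups \in \Upss(\RR)} \|\UU(\nabla^2)(\ups)\| > 9\nu_{2}\omega_2\zzq(\xx,6\dimtotal)\RR\Bigr\} \\
&& {} \cup \{\|\DF^{-1}\nabla^2\zetav(\upss)\| > \zz(\xx,\nabla^2\zetav(\upss))\},
\end{EQA}
I would assign a probability of $\beta_{(\mathbf A)}$ to the first set (directly from condition $(A_1)$), and $\ex^{-\xx}$ to each of the three remaining ones, summing to the stated bound $3\ex^{-\xx}+\beta_{(\mathbf A)}$.

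First I would handle the concentration event $\bigcap_{k=0}^{K}(\LCS_{k,k}\cap\LCS_{k,k+1})$. The key observation is that, by the very definition of the alternating procedure, $\LL(\tilde\ups_{k,k(+1)},\upss) \ge \LL(\tilde\ups_0,\upss)$ monotonically, so on the event $\{\LL(\tilde\ups_0,\upss) \ge -\KL(\xx)\}$ the full sequence lies in the random set $\Ups(\KL(\xx))$ of Theorem \ref{theo: large def with K}. Applying that theorem (its assumptions are guaranteed by $(B_1)$ together with the choice of $\RR(\xx)$ in \eqref{eq: def of radius RR}) shows $\Ups(\KL(\xx))\subseteq \Upss((1-\corrDF)\RR)$ with probability $\ge 1-\ex^{-\xx}$. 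The component bounds for $\|\DP(\tilde\thetav_{k}-\thetavs)\|$ and $\|\HH(\tilde\etav_{k(+1)}-\etavs)\|$ then follow immediately from $({\AssId})$ and Lemma \ref{lem: norm bound is exact}, just as in Section \ref{sec: prob of des set}. This step is a direct transcription of the argument already given there, so no new difficulty arises.

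The main new ingredient is the uniform bound on the second-derivative process $\UU(\nabla^2)$, which is needed instead of the gradient bounds $\UU$ and $\breve\UU$ used for Theorem \ref{theo: main Theorem}. The plan is to apply Theorem \ref{theo: bound for sup spectral norm of hessian} (mentioned around \eqref{eq: def of entropy term without quad correct}) to the matrix-valued field $\DF^{-1}(\nabla^2\zetav(\ups)-\nabla^2\zetav(\upss))$ on $\Upss(\RR)$. Condition $(\CS\DF_2)$ supplies exactly the required exponential-moment bound on its increments with constants $\rhor_2,\nu_2$; the covering argument yields the entropy term $\zzq(\xx,6\dimtotal)$ (the factor $6$ coming from doubling for the two unit-vectors $\gammav_1,\gammav_2$ and the chaining over $\Upss(\RR)$), with leading constant $9\nu_2\omega_2\RR$. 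This produces the probability bound $\ex^{-\xx}$ for the third event. The main obstacle in the proof is purely technical: tracking the geometry of the supremum over a product of spheres and ensuring the entropy argument from \cite{SP2011} carries over to this matrix setting; it is essentially a bookkeeping exercise and yields no new phenomenon.

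Finally, the pointwise bound on $\|\DF^{-1}\nabla^2\zetav(\upss)\|$ is obtained directly from the definition of $\zz(\xx,\nabla^2\LL(\upss))$: since $\nabla^2\zetav(\upss)=\nabla^2\LL(\upss)+\DF^2$, we have $\|\DF^{-1}\nabla^2\zetav(\upss)\DF^{-1}\|\le \|\DF^{-1}\nabla^2\LL(\upss)\DF^{-1}\|+1$, and the defining inequality for $\zz(\xx,\nabla^2\LL(\upss))$ gives the desired $\ex^{-\xx}$ tail. Summing the four contributions delivers the claim.
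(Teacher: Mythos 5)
Your proposal is correct and follows essentially the same route as the paper: condition \((A_1)\) plus the monotonicity \(\LL(\tilde\ups_{k,k(+1)},\upss)\ge\LL(\tilde\ups_0,\upss)\) and Theorem \ref{theo: large def with K} handle the concentration events, Theorem \ref{theo: bound for sup spectral norm of hessian} with \((\CS\DF_2)\) handles the supremum of \(\UU(\nabla^2)\), and the defining quantile of \(\zz(\xx,\nabla^2\LL(\upss))\) handles the pointwise Hessian bound, giving \(3\ex^{-\xx}+\beta_{(\mathbf A)}\) exactly as in the paper's sketch. Your explicit conversion between \(\nabla^2\zetav(\upss)\) and \(\nabla^2\LL(\upss)\) via \(\nabla^2\zetav(\upss)=\nabla^2\LL(\upss)+\DF^2\) is a harmless (indeed slightly more careful) elaboration of a step the paper simply absorbs into the phrase ``by assumption''.
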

\begin{proof}
The proof is very similar to the one presented in Section \ref{sec: prob of des set}, so we only give a sketch. 
By assumption
\begin{EQA}[c]
\P\left(\|\DF^{-1}\nabla^2\zetav(\upss)\|\le \zz(\xx,\nabla^2\zetav(\upss))\right)\ge 1-\ex^{-\xx},
\end{EQA}
and due to \( {(\CS \DF_{2})} \) with Theorem \ref{theo: bound for sup spectral norm of hessian}
\begin{EQA}[c]
\P\left( \sup_{\ups \in \Upss(\RR(\xx))} \| \UU(\nabla^2)(\ups) \|\le  9\nu_{2} \omega_2 \zzq(\xx,6\dimtotal) \RR(\xx)\right)\ge 1-\ex^{-\xx}.
\end{EQA}
\end{proof}

\begin{lemma}
 \label{lem: recursion for convergence to MLE}
Assume for some sequence \((\rr_k^{(l)})\) that
\begin{EQA}[c]
 \bigcap_{k\in\N}\left\{\big\|\DF(\tilde\upsilonv_{k,k(+1)}-\tilde\upsilonv)\big\|\le\rr_k^{(l)}\right\}\subseteq \Omega(\xx).
 \end{EQA} 
Then we get on \(\Omega(\xx)\)
\begin{EQA}
 \big\|\DF(\tilde\upsilonv_{k,k(+1)}-\tilde\upsilonv)\big\|&\le& 2\sqrt{2}(1+\sqrt \corrDF)\sum_{r=0}^{k-1}\corrDF^{r}\|\bb{\tau}(\rr_{k-r}^{(l)})\|+2\sqrt{2}\corrDF^{k}(\RR+\rups),\\
  &=:&\rr^{(l+1)}_{k}.
\label{eq: bound for shrinkin radi convergence}
\end{EQA}
where
\begin{EQA}[c]
\|\bb{\tau}(\rr)\|\le  \left[\delta(\RR)+9\nu_2 \omega_2\|\DF^{-1}\|\zzq(\xx,6\dimtotal)\RR+ \|\DF^{-1}\|\zz(\xx,\nabla^2\zetav(\upss))\right]\rr.
\end{EQA}
\end{lemma}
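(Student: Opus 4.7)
The plan is to follow the architecture of Lemma \ref{lem: recursion for statistical properties}, but with the local linear approximation now centered at the full maximizer $\tilde\upsilonv$ instead of $\upsilonvs$. Because $\tilde\upsilonv$ is the global maximizer of $\LL$, the first-order condition $\nabla\LL(\tilde\upsilonv)=0$ holds, and the counterpart of the expansion \eqref{eq: def of alpha} reduces to
\begin{EQA}[c]
\LL(\upsilonv,\tilde\upsilonv) \;=\; -\|\DF(\upsilonv-\tilde\upsilonv)\|^{2}/2 + \alpha'(\upsilonv,\tilde\upsilonv).
\end{EQA}
The stochastic linear term $\DF^{-1}\nabla\zetav(\upsilonvs)$ that generated the $\zz(\xx)$ contribution in Lemma \ref{lem: recursion for statistical properties} is therefore absent here, which is precisely why this variant of the argument yields genuine convergence to $\tilde\upsilonv$ rather than convergence only up to a $\zz(\xx)$-sized residual.

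First, combining $\nabla_{\thetav}\LL(\tilde\thetav_{k},\tilde\etav_{k})=0$ with $\nabla_{\thetav}\LL(\tilde\upsilonv)=0$ and Taylor expanding around $\tilde\upsilonv$, one obtains an identity of the form
\begin{EQA}[c]
\DP^{2}(\tilde\thetav_{k}-\tilde\thetav) + \AF(\tilde\etav_{k}-\tilde\etav) \;=\; \bb{\tau}_{\thetav}\bigl(\rr_{k}^{(l)}\bigr),
\end{EQA}
and the analogous identity for the $\etav$-step. Invoking $\|\DP^{-1}\AF\HH^{-1}\|\le\sqrt\corrDF$ from $(\bb{\AssId})$ and Lemma \ref{lem: norm bound is exact}, as in step 2 of Lemma \ref{lem: recursion for statistical properties}, gives
\begin{EQA}[c]
\|\DP(\tilde\thetav_{k}-\tilde\thetav)\| \;\le\; \sqrt\corrDF\,\|\HH(\tilde\etav_{k}-\tilde\etav)\| + \|\bb{\tau}(\rr_{k}^{(l)})\|,
\end{EQA}
together with the symmetric bound with $\thetav,\etav$ interchanged and the time index shifted by one. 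Substituting one into the other yields a scalar recursion with contraction factor $\corrDF<1$ that is solved explicitly.

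Second, the main technical input is the linear-in-$\rr$ bound for $\bb{\tau}$. Using
\begin{EQA}[c]
\nabla\LL(\tilde\upsilonv_{k,k}) - \nabla\LL(\tilde\upsilonv)
\;=\; \int_{0}^{1} \nabla^{2}\LL\bigl(\tilde\upsilonv + s(\tilde\upsilonv_{k,k}-\tilde\upsilonv)\bigr)(\tilde\upsilonv_{k,k}-\tilde\upsilonv)\,ds,
\end{EQA}
and splitting $\nabla^{2}\LL = \nabla^{2}\E\LL + \nabla^{2}\zetav(\upsilonvs) + [\nabla^{2}\zetav(\cdot)-\nabla^{2}\zetav(\upsilonvs)]$, the three contributions to $\bb{\tau}$ are controlled respectively by: the deterministic smoothness bound $(\LL_{0})$, which produces the $\delta(\RR)\rr$ term once $\DF^{2}=-\nabla^{2}\E\LL(\upsilonvs)$ is substituted; the deviation inequality $\|\DF^{-1}\nabla^{2}\zetav(\upsilonvs)\|\le\zz(\xx,\nabla^{2}\LL(\upsilonvs))$ built into $\Omega(\xx)$; and the uniform bound $\sup_{\upsilonv\in\Upss(\RR)}\|\UU(\nabla^{2})(\upsilonv)\|\le 9\nu_{2}\omega_{2}\zzq(\xx,6\dimtotal)\RR$ from the definition of $\LCS(\nabla)$, which itself follows from $(\CS\DF_{2})$ via Theorem \ref{theo: bound for sup spectral norm of hessian}. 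Summing the three contributions and factoring out one copy of $(\tilde\upsilonv_{k,k}-\tilde\upsilonv)$ produces the advertised linear bound in $\rr=\rr_{k}^{(l)}$.

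Finally, solving the coupled recursion and converting the componentwise $\DP$- and $\HH$-norms into the joint norm via $\|\DF(\upsilonv-\tilde\upsilonv)\|\le\sqrt{2}\|\DP(\thetav-\tilde\thetav)\|+\sqrt{2}\|\HH(\etav-\tilde\etav)\|$, together with the initial bound $\|\DF(\tilde\upsilonv_{0,0(+1)}-\tilde\upsilonv)\|\le \RR+\rups$ (valid on $\Omega(\xx)$ since $\tilde\upsilonv_{0,0(+1)}\in\Upss(\RR)$ and $\tilde\upsilonv\in\Upss(\rups)$), delivers the factors $2\sqrt{2}(1+\sqrt\corrDF)$ in front of the geometric sum and $2\sqrt{2}\,\corrDF^{k}(\RR+\rups)$ in the free term, as claimed. \textbf{Main obstacle:} The only delicate point is that the coefficient of $\|\bb{\tau}(\rr)\|$ scales with $\RR$ through the term $\|\DF^{-1}\|\zzq(\xx,6\dimtotal)\RR$, so that contraction of the recursion (as used in Theorem \ref{theo: convergence to MLE}) requires the smallness condition $\kappa(\xx,\RR)<1-\corrDF$; obtaining the cleanest universal constants, and in particular the correct power of $(1-\corrDF)^{-1/2}$ appearing in $\kappa$, demands careful bookkeeping of the factors $\sqrt\corrDF$ and $1+\sqrt\corrDF$ produced by the coupled $\thetav/\etav$ system.
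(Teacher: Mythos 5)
Your proposal is correct and follows essentially the same route as the paper's own proof: the first-order conditions at $\tilde\upsilonv_{k,k(+1)}$ and at $\tilde\upsilonv$ (so no $\zz(\xx)$-term appears), a linearization error $\bb{\tau}$ bounded linearly in $\rr$ by splitting the Hessian into $\nabla^{2}\E\LL$ (condition $(\LL_{0})$, giving $\delta(\RR)\rr$), $\nabla^{2}\zetav(\upss)$ (controlled by $\zz(\xx,\nabla^{2}\zetav(\upss))$ on $\Omega(\xx)$), and the increment $\nabla^{2}\zetav(\cdot)-\nabla^{2}\zetav(\upss)$ (the uniform bound from $(\CS\DF_{2})$ via Theorem \ref{theo: bound for sup spectral norm of hessian}), followed by the same coupled $\thetav/\etav$ recursion with factor $\sqrt\corrDF$ solved as in Lemma \ref{lem: recursion for statistical properties} with initial radius $\tilde\RR=\RR+\rups$. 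No gaps beyond the bookkeeping you already flag.
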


\begin{proof}

1. We first show that on \(\Omega(\xx) \)
\begin{EQA}\label{eq: bound for sequence first step theta convergence}
\DP(\tilde{\thetav}_k - \tilde\thetav)&=&-\DP^{-1}\AF(\tilde\etav_k-\tilde\etav)+\bb{\tau}\big(\rr_k^{(l)}\big),\\
\HH(\tilde \etav_k-\etavs)&=&-\HH^{-1}\AF^\T(\tilde{\thetav}_{k-1} - \tilde\thetav)+\bb{\tau}\big(\rr_k^{(l)}\big),
\label{eq: bound for sequence first step eta}
\end{EQA}

The proof is very similar to that of Lemma \ref{lem: recursion for statistical properties}. Define
\begin{EQA}[c]
\label{eq: def of alpha}
\alpha(\upsilonv,\tilde\upsilonv):=\LL(\upsilonv,\tilde\upsilonv)+\|\DF(\upsilonv-\tilde\upsilonv)\|^2/2.
\end{EQA}
Note that
\begin{EQA}
\LL(\upsilonv,\tilde\upsilonv)&=&\nabla\LL(\ups)-\|\DF(\upsilonv-\tilde\upsilonv)\|^2/2+\alpha(\upsilonv,\upsilonvs)\\
  &=&-\|\DP(\thetav-\tilde\thetav)\|^2/2+(\thetav-\thetavs)^\T\AF(\etav-\tilde\etav)\\
   &&-\|\HH(\etav-\tilde\etav)\|^2/2+\alpha(\upsilonv,\tilde\upsilonv).
\end{EQA}
Setting \(\nabla_{\thetav}\LL(\tilde\thetav_k,\tilde\etav_k)=0\) we find
\begin{EQA}[c]
\DP(\tilde\thetav_k-\tilde\thetav)=\DP^{-1}\AF(\tilde\etav_k-\tilde\etav)+\DP^{-1}\nabla_{\thetav} \alpha(\tilde \upsilonv_{k,k},\tilde \upsilonv).
\end{EQA}
We want to show
\begin{EQA}[c]
\sup_{(\thetav,\tilde\etav_{k})\in\tilde{\Upss}\big(\rr^{(l)}_k\big)\cap\Upss(\RR)}\DP^{-1}\nabla_{\thetav} \alpha((\thetav,\tilde\etav_{k}),\tilde \upsilonv)\le \|\bb{\tau}\big(\rr^{(l)}_k\big)\|,
\end{EQA}
where
\begin{EQA}\label{eq: def of U process}
	\DP^{-1}\nabla_{\thetav} \alpha(\ups,\tilde\upsilonv)
	& \eqdef &
	\DP^{-1} \bigl\{ 
		\nabla_{\thetav} \LL(\ups)-\DP^{2} \, (\thetav - \tilde \thetav)-\AF(\tilde\etav_k-\tilde \etav) 
	\bigr\}.
\label{UPupsnm}
\end{EQA}
To see this note that by assumption we have \(\Omega(\xx)\subseteq \{\tilde\ups\in\Upss(\rups)\}\subseteq \{\tilde\ups\in\Upss(\RR)\}\). By condition \( \bb{(\LL_{0})} \), Lemma \ref{lem: norm bound is exact} and Taylor expansion we have 
\begin{EQA}
&&\nquad\sup_{(\thetav,\tilde\etav_{k})\in \tilde\Upss(\rr^{(l)}_k)\cap\Upss(\RR)}\|\E\UP(\thetav,\tilde\etav_{k})\|\\
	&\le& \sup_{\upsilonv\in\tilde\Upss(\rr^{(l)}_k)\cap\Upss(\RR)}\|\DP^{-1}\Pi_{\thetav}\Big(\nabla \E\LL(\upsilonv) - \nabla \E\LL(\tilde\upsilonv)-\DF \, (\upsilonv - \upsilonvs)\Big)\|\\
  &\le& \sup_{\upsilonv\in\Upss(\RR)}\|\DP^{-1}\Pi_{\thetav}\DF\|\|\DF^{-1}\nabla^2 \E\LL(\upsilonv)\DF^{-1}-I_{\dimtotal}\|\rr^{(l)}_k\\
  &\le& \delta(\RR)\rr^{(l)}_k.
\end{EQA}
For the remainder note that with \(\zeta=\LL-\E\LL\) on \(\Omega(\xx)\) using Lemma \ref{lem: norm bound is exact}  we can bound 
\begin{EQA}
&&\nquad\sup_{(\thetav,\tilde\etav_{k})\in \tilde\Upss(\rr^{(l)}_k)\cap\Upss(\RR)}\Big\|\UP_{\thetav}(\thetav,\tilde\etav_{k})-\E\UP_{\thetav}(\thetav,\tilde\etav_{k})\Big\|\\
&\le&\sup_{\upsilonv\in\tilde\Upss(\rr^{(l)}_k)\cap\Upss(\RR)}\Big\|\DP^{-1}\Big(\nabla_{\thetav}\zetav(\upsilonv) - \nabla_{\thetav}\zetav(\tilde\upsilonv)\Big)\Big\|\\
 &\le& \sup_{\upsilonv\in \Upss(\rr)}\left\|\DF^{-1}\nabla^2\zetav(\upsilonv)\DF^{-1}\right\|\rr^{(l)}_k\\
&\le& \sup_{\upsilonv\in \Upss(\RR)}\bigg\{\frac{1}{9 \nu_{2}  \omega_2 }\|\DF^{-1}\left(\nabla^2\zetav(\upsilonv)-\nabla^2\zetav(\upss)\right)\DF^{-1}\|\bigg\}6 \nu_{1}  \omega \rr^{(l)}_k\\
	&&+\bigg\{\|\DF^{-1}\nabla^2\zetav(\upss)\DF^{-1}\|\bigg\}\rr^{(l)}_k\\ 
 &\le& \left[9\nu_{2}  \omega_2\|\DF^{-1}\|\zzq(\xx,6\dimtotal)\RR+ \|\DF^{-1}\|\zz(\xx,\nabla^2\zetav(\upss))\right]\rr^{(l)}_k.
\end{EQA}
Using the same argument for \(\tilde\etav_k\) gives the claim.\\

Now the claim follows as in the proof of Lemma \ref{lem: recursion for statistical properties}.
\end{proof}

\begin{lemma}
\label{lem: speed of convergenece to MLE}
Assume that \(\delta(\rr)/\rr\vee 9\nu_2\omega_2\vee \|\DF^{-1}\|\le \eps_2\). Further assume that \(\kappa(\xx,\RR)<1-\corrDF\) where
\begin{EQA}
\kappa(\xx,\RR)&\eqdef&\frac{2\sqrt{2}(1+\sqrt \corrDF)}{\sqrt{1-\corrDF}} \bigg( \delta(\RR)+9\omega_2\nu_2\|\DF^{-1}\|\zzq(\xx,6\dimtotal)\RR\\
	&&\text{\phantom{\(\frac{2\sqrt{2}(1+\sqrt \corrDF)}{\sqrt{1-\corrDF}} \)}}+  \|\DF^{-1}\|\zz\left(\xx,\nabla^2\LL(\upss)\right)\bigg).
\end{EQA}
Then
\begin{EQA}[c]
\Omega(\xx)\subseteq \bigcap_{k\in\N}\left\{ \ups_{k,k(+1)}\in \tilde \Upss(\rr_k)\right\},
\end{EQA}
where \((\rr_k)_{k\in\N}\) satisfy the bound \eqref{eq: bound for rrk sequence convergence}.
\end{lemma}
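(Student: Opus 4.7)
The plan is to iterate the recursion established in Lemma \ref{lem: recursion for convergence to MLE} and to solve it in closed form. Substituting the bound on $\|\bb{\tau}(\rr)\|$ and using the definition of $\kappa(\xx,\RR)$, the bracket multiplying $\rr$ equals $\kappa(\xx,\RR)\sqrt{1-\corrDF}/\{2\sqrt 2(1+\sqrt\corrDF)\}$. Hence with $q := \kappa(\xx,\RR)\sqrt{1-\corrDF}$ and $\tilde\RR := \RR+\rups$ the recursion becomes
\begin{EQA}[c]
\rr_k^{(l+1)} \le q\sum_{r=0}^{k-1}\corrDF^r\rr_{k-r}^{(l)} + 2\sqrt 2\corrDF^k\tilde\RR,
\end{EQA}
with starting value $\rr_k^{(0)} \le \tilde\RR$ on $\Omega(\xx)$ (from $\tilde\ups\in\Upss(\rups)$ and $\ups_{k,k(+1)}\in\Upss(\RR)$).

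By induction on $l$, using the hockey-stick identity $\sum_{i=1}^k \binom{i+j-1}{j} = \binom{k+j}{j+1}$, one obtains the closed-form bound
\begin{EQA}[c]
\rr_k^{(l)} \le \tilde\RR \left(\frac{q}{1-\corrDF}\right)^{l} + 2\sqrt 2\, \tilde\RR\, \corrDF^k \sum_{j=0}^{l-1}\binom{k+j-1}{j}q^j.
\end{EQA}
The first summand is the decaying residual of the initial guess under repeated application of the operator, the second the accumulated forcing.

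For Case 1 ($\kappa k \le 1$) I would pass $l\to\infty$. The assumption $\kappa < 1-\corrDF$ gives $q/(1-\corrDF) < 1$, so the residual vanishes. The partial sum converges to $(1-q)^{-k}$, and Bernoulli's inequality $(1-q)^k \ge 1-qk \ge 1-\kappa k$ (valid since $qk \le \kappa k \le 1$) delivers the first-case bound $\rr_k^* \le 2\sqrt 2\tilde\RR\corrDF^k/(1-\kappa k)$.

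For Case 2 ($\kappa k > 1$) Bernoulli fails and I would instead keep $l = L(k)$ finite, balancing the two terms. Controlling the partial sum via a binomial estimate such as $\binom{k+j-1}{j}\le (e(k+j)/j)^j$ and optimizing $\tilde\RR(q/(1-\corrDF))^{L} + 2\sqrt 2\corrDF^k\tilde\RR\,S_L(k)$ over the integer $L$ yields the stated bound. The elaborate expression for $L(k)$, including the factor $(1+\log(1-\corrDF)/\log k)^{-1}$ in the denominator and the outer floor, arises from inverting this balance equation and forcing the resulting bound into the form $\frac{1-\corrDF}{\kappa}\tau(\xx)^{k/\log k}\tilde\RR$ with $\tau = (\kappa/(1-\corrDF))^{L(k)}$. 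The main obstacle is precisely this optimization: verifying that the specific floor-valued $L(k)$ makes both contributions of order $\tau^{k/\log k}$ requires careful logarithmic bookkeeping and tight binomial estimates for the partial sum.
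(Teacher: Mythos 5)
Your setup is the same as the paper's: you iterate the recursion of Lemma \ref{lem: recursion for convergence to MLE} with starting radius \(\tilde\RR=\RR+\rups\) (correct, since on \(\Omega(\xx)\) one has \(\ups_{k,k(+1)}\in\Upss(\RR)\) and \(\tilde\ups\in\Upss(\rups)\)), and your closed-form bound \(\rr_k^{(l)}\le\tilde\RR\,(q/(1-\corrDF))^{l}+2\sqrt2\,\tilde\RR\,\corrDF^{k}\sum_{j=0}^{l-1}\frac{k(k+1)\cdots(k+j-1)}{j!}q^{j}\) is correct (the hockey-stick induction checks out) and is in fact sharper than what the paper works with, since there the number of index tuples in the \(j\)-fold nested sum is simply bounded by \(k^{j}\). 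The case \(\kappa(\xx,\RR)k\le1\) is complete and correct: the negative binomial series plus Bernoulli, together with \(q=\kappa(\xx,\RR)\sqrt{1-\corrDF}\le\kappa(\xx,\RR)\) and \(q/(1-\corrDF)=\kappa(\xx,\RR)/\sqrt{1-\corrDF}<1\), reproduces the first line of \eqref{eq: bound for rrk sequence convergence}.

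The genuine gap is the case \(\kappa(\xx,\RR)k>1\), which you leave as an unexecuted optimization and yourself flag as the main obstacle; as written this does not establish the second line of \eqref{eq: bound for rrk sequence convergence} with the specific \(L(k)\) and \(\tau(\xx)\). The missing step is elementary once you coarsen your coefficient: since \(k+i\le k(1+i)\), one has \(\frac{k(k+1)\cdots(k+j-1)}{j!}\le k^{j}\), so for \(\kappa(\xx,\RR)k>1\) the forcing sum is dominated by the geometric series \(\sum_{j<l}(\kappa(\xx,\RR)k)^{j}\le(\kappa(\xx,\RR)k)^{l}/(\kappa(\xx,\RR)k-1)\), and hence \(\rr_k^{(l)}\le\kappa(\xx,\RR)^{l}\bigl\{(1-\corrDF)^{-l}+2\sqrt2\,\corrDF^{k}k^{l}/(\kappa(\xx,\RR)k-1)\bigr\}\tilde\RR\). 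Equating the two terms in the braces gives \(l(k)=\bigl(k\log\corrDF+\log(2\sqrt2)-\log(\kappa(\xx,\RR)k-1)\bigr)/\bigl(-\log(1-\corrDF)-\log k\bigr)\), which is exactly \(\frac{k}{\log k}\) times the quantity whose floor defines \(L(k)\). Taking \(l=\lfloor l(k)\rfloor\) and using that the first term decreases and the second increases in \(l\), the sum at \(\lfloor l(k)\rfloor\) is at most \(2(\kappa(\xx,\RR)/(1-\corrDF))^{\lfloor l(k)\rfloor}\tilde\RR\le2(\kappa(\xx,\RR)/(1-\corrDF))^{\frac{k}{\log k}L(k)-1}\tilde\RR=2\frac{1-\corrDF}{\kappa(\xx,\RR)}\tau(\xx)^{k/\log(k)}\tilde\RR\), i.e.\ the stated bound; the factor \(2\frac{1-\corrDF}{\kappa(\xx,\RR)}\) is precisely the price of the floor. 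In particular no estimate of the type \((e(k+j)/j)^{j}\) is needed, and insisting on the sharp binomial sum would only make it harder to recover the stated form of \(L(k)\).
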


\begin{proof}
Define for all \(k\in\N_0\) the sequence \(\rr_k^{(0)}=\RR\). We estimate
\begin{EQA}
\|\bb{\tau}(\rr_k^{(l)})\|&\le& \frac{1}{\sqrt{1-\corrDF}} \left(\delta(\RR)+6 \nu_{1}  \omega_2\|\DF^{-1}\|\zzq(\xx,6\dimtotal)\RR +\|\DF^{-1}\| \zz(\xx,\BB(\nabla^2)\right)\rr_k^{(l)},
\end{EQA}
such that by definition
\begin{EQA}
2\sqrt{2}(1+\sqrt \corrDF)\sum_{r=0}^{k-1}\corrDF^{r}\|\bb{\tau}(\rr_{k-r}^{(l)})\|
	&\le& \kappa(\xx,\RR)\sum_{r=0}^{k-1}\corrDF^{r}\rr_{k-r}^{(l)}.
\end{EQA}
Plugging in the recursive formula for \(\rr_k^{(l)}\) from \eqref{eq: bound for shrinkin radi convergence} and denoting \(\tilde\RR\eqdef \RR+\rups\) we find
\begin{EQA}
\rr_k^{(l)}&\le& \kappa(\xx,\RR)\sum_{r=0}^{k-1}\corrDF^{r}\rr_{k-r}^{(l-1)}+ 2\sqrt{2}\corrDF^{k}\tilde\RR \\
	&\le& \kappa(\xx,\RR)\sum_{r=0}^{k-1}\corrDF^{r}\left(\kappa(\xx,\RR)\sum_{s=0}^{k-r-1}\corrDF^{s}\rr_{k-r-s}^{(l-2)}+ 2\corrDF^{k-r} \tilde\RR \right) + 2\sqrt{2}\tilde\RR \corrDF^{k}\\
	&\le&  \kappa(\xx,\RR)^2\sum_{r=0}^{k-1}\corrDF^{r}\sum_{s=0}^{k-r-1}\corrDF^{s}\rr_{k-r-s}^{(l-2)}+2\sqrt{2} \corrDF^{k}\tilde\RR\left(\kappa(\xx,\RR) k + 1\right) \\
	&\le& \kappa(\xx,\RR)^2\sum_{r=0}^{k-1}\corrDF^{r}\sum_{s=0}^{k-r-1}\corrDF^{s}\left(\kappa(\xx,\RR)\sum_{t=0}^{k-r-s-1}\corrDF^{t}\rr_{k-r-s-t}^{(l-3)}+ 2\corrDF^{k-r-s} \tilde\RR \right)\\
	&& +2\sqrt{2} \corrDF^{k}\tilde\RR\left(\kappa(\xx,\RR) k + 1\right) \\
	&\le& \kappa(\xx,\RR)^3\sum_{r=0}^{k-1}\corrDF^{r}\sum_{s=0}^{k-r-1}\corrDF^{s}\rr_{k-r-s}^{(l-3)}+2\sqrt{2} \corrDF^{k}\tilde\RR\left(\kappa(\xx,\RR)^2 k^2+\kappa(\xx,\RR) k + 1\right).
\end{EQA}
By induction this gives for \(l\in\N\)
\begin{EQA}
\rr_k^{(l)}&\le& \kappa(\xx,\RR)^l\sum_{r_1=0}^{k-1}\corrDF^{r_1}\sum_{r_2=0}^{k-r_1-1}\corrDF^{r_2}\ldots\sum_{r_l=0}^{k-\sum_{s=1}^{l-1}r_s-1}\corrDF^{r_l}\tilde\RR\\
	&&+2\sqrt{2} \corrDF^{k}\tilde\RR \sum_{s=0}^{l-1} \kappa(\xx,\RR)^s k^s\\
	&\le&  \left(\left(\frac{\kappa(\xx,\RR)}{1-\corrDF}\right)^l+2\sqrt{2} \corrDF^{k}\sum_{s=0}^{l-1}\left(\kappa(\xx,\RR) k\right)^s\right)\tilde\RR\\
	&\le&\begin{cases} \left(\left(\frac{\kappa(\xx,\RR)}{1-\corrDF}\right)^l+2\sqrt{2} \corrDF^{k}\frac{1}{1-\kappa(\xx,\RR) k}\right)\tilde\RR , & \kappa(\xx,\RR) k\le 1,\\
  \kappa(\xx,\RR)^l\left(\left(\frac{1}{1-\corrDF}\right)^l+2\sqrt{2} \corrDF^{k}\frac{k^{l}}{\kappa(\xx,\RR) k-1}\right)\tilde\RR , & \text{otherwise.}\end{cases}
\end{EQA}
By Lemma \ref{lem: recursion for convergence to MLE}
\begin{EQA}[c]
\Omega(\xx)\subset \bigcap_{k\in\N_0}\bigcap_{l\in\N}\left\{\tilde \ups_{k,k(+1)}\in \tilde\Upss\big(\rr_k^{(l)}\big)\right\}.
\end{EQA}
Set if \(\kappa(\xx,\RR)/(1-\corrDF)<1\)
\begin{EQA}[c]
l(k)\eqdef \begin{cases} \infty, & \kappa(\xx,\RR) k\le 1,\\
\frac{k\log(\corrDF)+\log(2\sqrt{2})-\log(\kappa(\xx,\RR) k-1)}{-\log(1-\corrDF)-\log(k)}, & \text{otherwise.}\end{cases}
\end{EQA}
Then with \(\rr_k^*\eqdef \rr_k^{\left(\lfloor l(k)\rfloor\right)}\) we get 
\begin{EQA}
\Omega(\xx)\subset \bigcap_{k\in\N_0}\left\{\tilde \ups_{k,k(+1)}\in \tilde\Upss\big(\rr_k^{*}\big)\right\}, &\quad \rr_k^{*}\le\begin{cases}  \frac{\corrDF^{k} 2\sqrt{2}}{1-\kappa(\xx,\RR) k}\tilde\RR , & \kappa(\xx,\RR) k\le 1,\\
   2\left(\frac{\kappa(\xx,\RR)}{1-\corrDF}\right)^{\frac{k}{\log(k)}L(k)-1}\tilde\RR , & \text{otherwise,}\end{cases}
\end{EQA}
as claimed.
\end{proof}

\section{Deviation bounds for quadratic forms}
\label{ap: Deviation bounds for quadratic forms}
This section is the same as Section A of \cite{AASP2013}.
The following general result from \cite{SP2011} helps
to control the deviation for quadratic forms of type \( \| \BB \xiv \|^{2} \)
for a given positive matrix \( \BB \) and a random vector \( \xiv \). 
It will be used several times in our proofs.
Suppose that 
\begin{EQA}[c]
    \log \E \exp\bigl( \gammav^{\T} \xiv \bigr) 
    \le 
    \| \gammav \|^{2}/2,
    \qquad 
    \gammav \in \R^{\dimp}, \, \| \gammav \| \le \gm .
\label{expgamgm} 
\end{EQA}
For a symmetric matrix \( \BB \), define 
\begin{EQA}[c]
    \dimA = \tr (\BB^{2}) , 
    \qquad 
    \vA^{2} = 2 \tr(\BB^{4}),
    \qquad 
    \lambdaB \eqdef \| \BB^{2} \|_{\infty} \eqdef \lambda_{\max}(\BB^{2}) .
\label{dimAvAlb}
\end{EQA}
For ease of presentation, suppose that \( \gm^{2} \ge 2 \dimB \).
The other case only changes the constants in the inequalities. 
Note that \( \| \xiv \|^{2} = \etav^{\T} \BB \, \etav \).
Define \( \muc = 2/3 \) and
\begin{EQA}
    \gmc
    & \eqdef &
    \sqrt{\gm^{2} - \muc \dimB} ,
    \\
    2(\xxc+2)
    & \eqdef &
    (\gm^{2}/\muc - \dimB)/\lambdaB + \log \det \bigl( \Id_{\dimp} - \muc \BB/\lambdaB \bigr) .
\label{yycgmcxxcAd}
\end{EQA}

\begin{proposition}
\label{LLbrevelocro}   
\label{theo: dev bounds quad forms}
Let \( (E\!D_{0}) \) hold with \( \nunu = 1 \) and 
\( \gmb^{2} \ge 2 \dimB \).
Then
for each \( \xx > 0 \)
\begin{EQA}
    \P\bigl( \|\BB  \xiv \| \ge \zz(\xx,\BB) \bigr)
    & \le &
    2 \ex^{-\xx} ,
\label{PxivbzzBBro}
\end{EQA}    
where \( \zz(\xx,\BB) \) is defined by
\begin{EQA}
\label{eq: def of quad froms dev term}
    \zz^{2}(\BB,\xx)
    & \eqdef &
    \begin{cases}
        \dimB + 2 \vA_{\BB} (\xx+1)^{1/2}, &  \xx+1 \le \vA_{\BB}/(18 \lambdaB) , \\
        \dimB + 6 \lambdaB (\xx+1), & \vA_{\BB}/(18 \lambdaB) < \xx+1 \le \xxc+2 , \\
        \bigl| \yyc + 2 \lambdaB (\xx - \xxc + 1)/\gmc \bigr|^{2}, & \xx > \xxc+1 ,
    \end{cases}
\end{EQA}    
with \( \yyc^{2} \leq \dimB + 6 \lambdaB (\xxc+2) \).
\end{proposition}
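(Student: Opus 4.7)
The proposition is cited from \cite{SP2011}, so my plan is to sketch how to reprove it. I would start from the Chernoff inequality: for any $\mu>0$,
\begin{EQA}[c]
\P\bigl(\|\BB\xiv\|^2 \ge \zz^2\bigr) \le \exp(-\mu\zz^2/2)\,\E\exp\bigl(\mu\|\BB\xiv\|^2/2\bigr),
\end{EQA}
so the whole task reduces to controlling the moment generating function of $\|\BB\xiv\|^2=\xiv^\T\BB^2\xiv$ as a function of $\mu$ and then optimizing over $\mu$.

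To bound the MGF I would use Gaussian linearization. Applying the identity $\exp(\|v\|^2/2) = (2\pi)^{-\dimp/2}\int \exp(\etav^\T v - \|\etav\|^2/2)\,d\etav$ with $v = \sqrt{\mu}\BB\xiv$, then Fubini, then the hypothesis \eqref{expgamgm} with $\gammav=\sqrt{\mu}\BB\etav$ (legitimate as long as $\sqrt{\mu}\|\BB\etav\|\le\gm$ on the relevant slab of $\etav$), and finally the residual Gaussian integration gives the classical bound
\begin{EQA}[c]
\log\E\exp\bigl(\mu\|\BB\xiv\|^2/2\bigr) \le -\tfrac{1}{2}\log\det\bigl(\Id_{\dimp}-\mu\BB^2\bigr).
\end{EQA}
Taylor-expanding the log-determinant will then produce leading mean $\tfrac{\mu}{2}\dimB$, quadratic coefficient $\tfrac{\mu^2}{4}\vA_{\BB}^2$, and a cubic remainder uniformly controlled by $\lambdaB$ as long as $\mu \le \muc/\lambdaB$ for a universal constant.

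The three regimes of $\zz(\xx,\BB)$ then emerge by optimizing the Chernoff bound in $\mu$. For small $\xx$ with $\xx+1\le \vA_{\BB}/(18\lambdaB)$ the optimum $\mu\approx(\zz^2-\dimB)/\vA_{\BB}^2$ lies deep inside the quadratic Taylor region and yields $\zz^2 = \dimB + 2\vA_{\BB}\sqrt{\xx+1}$. For moderate $\xx$ the unconstrained optimum saturates near $2/(3\lambdaB)$, which produces the linear bound $\zz^2 = \dimB + 6\lambdaB(\xx+1)$; this survives until the constraint $\sqrt{\mu}\|\BB\etav\|\le\gm$ in the MGF step begins to bite, and that conflict defines $\xxc$ through \eqref{yycgmcxxcAd}. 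Beyond $\xxc$ I would freeze $\mu$ at its critical value and propagate the excess linear deviation through the residual Cram\'er slope $2\lambdaB/\gmc$, producing the third piece of \eqref{eq: def of quad froms dev term} built from $\yyc$ and $\gmc$. The overall factor $2$ in \eqref{PxivbzzBBro} is absorbed by a two-term union bound used when gluing the pieces and passing from $\|\BB\xiv\|^2$ back to $\|\BB\xiv\|$.

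The hard part will be purely technical bookkeeping: verifying $\sqrt{\mu}\|\BB\etav\|\le\gm$ uniformly enough on the effective Gaussian integration domain to justify the MGF bound (this is exactly where the standing assumption $\gm^2\ge 2\dimB$ enters, to ensure the Gaussian regime is non-empty), and matching the three expressions for $\zz(\xx,\BB)$ continuously at the crossovers $\xx+1 = \vA_{\BB}/(18\lambdaB)$ and $\xx=\xxc+1$ so that the piecewise bound remains sharp in every regime. Once these matchings are in place, the remaining Cram\'er-style optimization is routine.
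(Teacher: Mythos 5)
The paper itself contains no proof of this proposition: the whole appendix section is imported verbatim from Appendix A of \cite{AASP2013}, and the statement is simply cited as a result of \cite{SP2011}. Your sketch follows exactly the route by which the result is proved in that cited source — exponential Chebyshev for the quadratic form, Gaussian linearization of $\exp\bigl(\mu\|\BB\xiv\|^{2}/2\bigr)$, the resulting $-\frac{1}{2}\log\det\bigl(\Id-\mu\BB^{2}\bigr)$ bound with its Taylor expansion controlled by $\dimB$, $\vA$, $\lambdaB$, and a three-regime optimization in $\mu$ whose breakdown at the constraint $\sqrt{\mu}\,\|\BB\etav\|\le\gm$ produces $\xxc$, $\gmc$, $\yyc$ — so as an outline it is the same argument, and there is nothing in this paper to contrast it with. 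Two small caveats: the factor $2$ in the probability bound does not arise from ``passing from $\|\BB\xiv\|^{2}$ back to $\|\BB\xiv\|$'' (that step is monotone and costs nothing) but from the constants accumulated in assembling the truncated-linearization/large-deviation pieces; and everything you label as ``technical bookkeeping'' (justifying the linearization under finite $\gm$, the exact constants $6$ and $18$, and the definition of $\xxc$, $\gmc$, $\yyc$) is precisely the content of the proof in \cite{SP2011}, which your plan defers rather than supplies — acceptable here only because the paper defers it to the same reference.
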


\section{A uniform bound for the norm of a random process}
\label{ap: bound for norm of gradient}
We want to derive for a random process \(\breve\UU(\ups)\in\R^{\dimp}\) a bound of the kind
\begin{EQA}[c]
	\P\left(\sup_{\rr\le \rr^*}\sup_{\ups \in \Upss(\rr)}\left\{ \frac{1}{\rhor}\| \breve\UU(\ups) \|-2\rr^2\right\}
	\ge
	\CONST \zzQ(\xx,\dimtotal)\right)
	\le \ex^{-\xx}.
\end{EQA}
This is a slightly stronger result than the one derived in Section D of \citep{AASP2013} but the ideas employed here are very similar.

We want to apply Corollary 2.5 of the supplement of \cite{SP2011} which we cite here as a Theorem. Note that we slightly generalized the formulation of the theorem, to make it applicable in out setting. The proof remains the same.  
\begin{theorem}
\label{theo: cor 2.5 of spokoiny}
Let \( (U(\rr))_{0\le \rr\le \rr^*}\subset \R^{\dimp}\) be a sequence of balls around \(\upss\) induced by the metric \(d(\cdot,\cdot)\). Let a random real valued process \( \UP(\rr,\ups) \) 
fulfill for any \(0\le \rr\le \rr^*\) that \( \UP(\rr,\upss) = 0 \) and 
\begin{description}
\item[\(\bb{(\CS d)} \)] For any  \( \ups,\upsd\in U(\rr)\)
\begin{EQA}
	\log \E \exp\biggl\{ 
		\lambda 
        \frac{\UP(\rr,\ups)-\UP(\rr,\upsd)}{d(\ups,\upsd)} 
	\biggr\}
	& \leq & \frac{\nunu^{2} \lambda^{2}}{2} ,
	\qquad
	|\lambda| \leq \gm .
\label{eq: uu bound in corollary 2.5}
\end{EQA}
\end{description}
Finally assume that \(\sup_{\ups\in U(\rr)}(\UP(\rr,\ups))\) increases in \(\rr\). Then with probability greater \(1-\ex^{-\xx}\) 
\begin{EQA}
	\sup_{\ups \in U(\rr)}\left\{ \frac{1}{3 \nu_1 } \UP(\rr,\ups) -d(\ups,\upss)^2\right\}
	& \leq &
	\zzQ(\xx,\dimtotal)^2,
\label{bouuvupsrupsdx22}
\end{EQA}
where \(\zzQ(\xx,\dimtotal)\eqdef \entrl( U(\rr^*))\) denotes the entropy of the set \( U(\rr^*)\subset \R^{\dimp}\) and where with \( \gmd = \nunu \gmb \) and for some \(\entrl>0\)
\begin{EQA}[c]
    \zzQ(\xx,\entrl)^2
    \eqdef
    \begin{cases}
    (1+\sqrt{\xx + \entrl} )^2
        & \text{if } 1+\sqrt{\xx + \entrl} \le \gmd, \\
    1+\{2\gmd^{-1} (\xx + \entrl) + \gmd \}^2
        & \text{otherwise} .
  \end{cases}
\label{eq: entropy bound quad correct}
\end{EQA}   
\end{theorem}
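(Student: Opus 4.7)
The statement is quoted from Corollary~2.5 of the supplement to \cite{SP2011}, so the plan is essentially to sketch the chaining-plus-peeling argument that appears there, adapted to the present formulation where the correction \(d(\ups,\upss)^{2}\) is subtracted. The proof would proceed in four steps.

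First, I would fix a reference radius \(\rr_{0}\) and establish a one-scale chaining bound on \(U(\rr_{0})\). Combining the sub-Gaussian increment condition \((\CS d)\) with a standard symmetric generic-chaining construction using a sequence of covering nets of \(U(\rr_{0})\) yields
\begin{EQA}[c]
	\P\biggl( \sup_{\ups\in U(\rr_{0})} \bigl\{\UP(\rr_{0},\ups)-\UP(\rr_{0},\upss)\bigr\} \;\geq\; 3\nu_{1}\,\rr_{0}\,\bigl(1+\sqrt{\xx+\entrl(U(\rr_{0}))}\bigr)\biggr) \leq \ex^{-\xx},
\end{EQA}
valid in the Cramér range \(1+\sqrt{\xx+\entrl}\leq\gmd\eqdef\nunu\gmb\). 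The factor \(3\nu_{1}\) absorbs the standard chaining constants together with the sub-Gaussian parameter, which is why the same constant appears in the statement.

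Second, I would peel the ball \(U(\rr^{*})\) into dyadic shells \(A_{j}\eqdef U(2^{-j}\rr^{*})\setminus U(2^{-j-1}\rr^{*})\) for \(j=0,1,2,\ldots\), apply the one-scale bound on each shell with confidence parameter \(\xx+j\) (so that the union bound yields a geometric series summing to at most a constant multiple of \(\ex^{-\xx}\)), and exploit the shell-wise lower bound \(d(\ups,\upss)^{2}\geq 4^{-j-1}(\rr^{*})^{2}\). Subtracting this quadratic lower bound from the linear chaining estimate on \(A_{j}\) cancels the scale factor \(2^{-j}\rr^{*}\) and produces a \(j\)-independent bound of order \((1+\sqrt{\xx+\entrl})^{2}\), which is precisely the first branch of \(\zzQ(\xx,\entrl)^{2}\) in \eqref{eq: entropy bound quad correct}.

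Third, I would treat the sub-exponential regime \(1+\sqrt{\xx+\entrl}>\gmd\). Here the Gaussian Chernoff bound is no longer invertible against the tail because one would need \(|\lambda|>\gm\); instead one applies the Cramér transform at the endpoint \(\lambda=\gm\), producing an estimate of the form \(\gmd^{-1}(\xx+\entrl)+\gmd\), which is exactly the second branch of \(\zzQ(\xx,\entrl)^{2}\). Finally, to conclude uniformly in \(\rr\leq\rr^{*}\), I would invoke the monotonicity hypothesis \(\sup_{\ups\in U(\rr)}\UP(\rr,\ups)\) nondecreasing in \(\rr\), noting that the correction \(-d(\ups,\upss)^{2}\) is an intrinsic function of \(\ups\) and is controlled by the same sub-level structure, so the bound at \(\rr=\rr^{*}\) dominates all smaller \(\rr\); the entropy term in \(\zzQ\) becomes \(\entrl(U(\rr^{*}))\), denoted \(\dimtotal\) in the statement.

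The main obstacle is the careful accounting in the peeling step: one must simultaneously balance the shell diameter against the quadratic correction and ensure that the summation of exceptional probabilities over the dyadic layers does not inflate the factor \(\ex^{-\xx}\). Equally delicate is the transition between the two branches of \(\zzQ\), which requires matching the Cramér threshold \(\gm\) on each shell individually and verifying that the one-scale bound still applies. Both issues are handled in \cite{SP2011}; the statement as quoted is a direct transcription of that result to our notation.
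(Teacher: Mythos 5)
Your bottom line---that this theorem is a transcription of Corollary~2.5 of the supplement of \cite{SP2011} and that the details live there---is in fact exactly the paper's own treatment: the paper contains no internal proof of this statement, only the remark that the formulation has been ``slightly generalized'' (metric balls, an \(\rr\)-dependent process, the added monotonicity hypothesis) and that ``the proof remains the same.'' Your step~4 correctly identifies why the monotonicity hypothesis was added, namely to reduce the uniform-in-\(\rr\) claim to the single radius \(\rr=\rr^{*}\), and your steps~1 and~3 (one-scale chaining with the constant \(3\nu_{1}\); Cram\'er transform at the endpoint \(\lambda=\gm\) for the second branch of \(\zzQ\)) are the right ingredients.

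However, the peeling in your step~2 would fail as written, so the sketch has a genuine gap. On the shell \(A_{j}\), the one-scale bound at confidence \(\xx+j\) gives \(\sup\UP/(3\nu_{1})\le 2^{-j}\rr^{*}\bigl(1+\sqrt{\xx+j+\entrl}\bigr)\), and subtracting the penalty \(4^{-j-1}(\rr^{*})^{2}\) yields, via \(2bz-b^{2}\le z^{2}\) with \(b=2^{-j-1}\rr^{*}\), at most \(\bigl(1+\sqrt{\xx+j+\entrl}\bigr)^{2}\). This grows linearly in \(j\) and is \emph{not} the \(j\)-independent bound of order \((1+\sqrt{\xx+\entrl})^{2}\) you assert; the supremum over shells is only saved by the competing unpenalized bound \(2^{-j}\rr^{*}(1+\sqrt{\xx+j+\entrl})\), and balancing the two at the crossover index leaves an additive term of order \(\log\rr^{*}\), so for large \(\rr^{*}\) your accounting does not reproduce \(\zzQ\bigl(\xx,\entrl(U(\rr^{*}))\bigr)^{2}\). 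In addition \(\sum_{j\ge 0}\ex^{-\xx-j}=(1-\ex^{-1})^{-1}\ex^{-\xx}>\ex^{-\xx}\), so the stated confidence level is missed (fixable by \(\xx+j+1\)). The allocation of confidence is backwards: the quadratic penalty is large on the \emph{outer} shells, and that is where the extra confidence must be spent. The correct scheme peels \emph{outward} from an inner ball of radius \(\rr_{0}\sim\zzq(\xx,\entrl)\), on which a single chaining application suffices (no inward peeling is needed, since the chaining bound already scales linearly in the radius and the penalty may simply be dropped there), and on the shell at radius \(t\) takes confidence \(\xx_{t}\approx\xx+t^{2}/\CONST\) in the sub-Gaussian branch (resp.\ \(\xx+\gmd t/\CONST\) in the sub-exponential branch), so that \(t\,\zzq(\xx_{t},\entrl)-(t/2)^{2}\le\CONST'\,\zzq(\xx,\entrl)^{2}\) uniformly in \(t\) while \(\sum_{t}\ex^{-\xx_{t}}\le\ex^{-\xx}\) converges geometrically. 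With that reallocation your remaining steps go through and recover the statement as quoted.
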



To use this result let \( \breve\UU(\ups) \) be a smooth centered random vector process with values in 
\( \R^{\dimp} \) and let \(\DF: \R^{\dimtotal}\to \R^{\dimtotal}\) be some linear operator. 
We aim at bounding the maximum of the norm \( \| \breve\UU(\ups) \| \)
over a vicinity \( \Upss(\rr)\eqdef \{\|\DF(\ups - \upss) \|\le \rr\} \) of \( \upss \).
Suppose that \( \breve\UU(\ups) \) satisfies for each \( 0<\rr<\rr^{*} \) and for all pairs \( \ups,\upsd \in \Upss(\rr) = \bigl\{ \ups \in \Ups \colon \| \DF(\ups - \upss )\| \leq \rr \bigr\}\subset \R^{\dimtotal} \)

\begin{EQA}
\label{zsmu}
    \sup_{\|\uv\|\le 1} \log \E \exp \biggl\{
        \lambda \frac{\uv^{\T}\bigl(\breve\UU(\ups)-\breve\UU(\upsd)\bigr)}{\rhor\|\DF(\ups - \upsd) \|}
    \biggr\}
    & \le &
    \frac{\nunu^{2} \lambda^{2}}{2} .
\end{EQA}

\begin{remark}
In the setting of Theorem~\ref{theo: main Theorem} we have
\begin{EQA}[c]
\breve\UU(\upsilonv)=\DPr^{-1}\Big(\breve\nabla\zetav(\upsilonv) - \breve\nabla\zetav(\upsilonvs)\Big),
\end{EQA}
and condition \eqref{zsmu} becomes \({(\CS \DF_{1})}\) from \ref{sec: conditions}. 
\end{remark}

\begin{theorem}
\label{theo: bound for norm quad corrected}
Let a random \( \dimp \)-vector process \( \breve\UU(\ups) \) 
fulfill \( \breve\UU(\upss) = 0 \) and let condition \eqref{zsmu} be satisfied. 
Then for each \(0\le \rr \le\rr^* \), on a set of probability greater \(1-\ex^{-\xx}\)
\begin{EQA}
	\sup_{\rr\le \rr^*}\sup_{\ups \in \Upss(\rr)}\left\{ \frac{1}{6 \rhor\nu_1 } \| \breve\UU(\ups) \|-2\rr^2\right\}
	& \leq &
	\zzQ(\xx,2\dimtotal+2\dimp)^2,
\label{bouuvupsrupsdx22}
\end{EQA}
with \( \gmd = \nunu \gmb \).
\end{theorem}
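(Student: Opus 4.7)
The plan is to reduce the vector-valued bound to a family of scalar-process bounds via a discretization of the unit sphere in $\R^{\dimp}$, apply Theorem \ref{theo: cor 2.5 of spokoiny} on each point of the resulting net, and absorb the union-bound cost into the entropy argument of $\zzQ$.

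First I would observe that the double supremum in the statement reduces to a single one: for any fixed $\ups\in\Upss(\rr^{*})$ the bracketed expression $\tfrac{1}{6\rhor\nu_{1}}\|\breve\UU(\ups)\|-2\rr^{2}$ is decreasing in $\rr$ over the set $\{\rr:\ups\in\Upss(\rr)\}$, so its supremum is attained at the smallest admissible value $\rr=\|\DF(\ups-\upss)\|$. Hence the claim is equivalent to bounding $\sup_{\ups\in\Upss(\rr^{*})}\{\tfrac{1}{6\rhor\nu_{1}}\|\breve\UU(\ups)\|-2\|\DF(\ups-\upss)\|^{2}\}$. Next I would pick a $1/2$-net $N$ on the Euclidean unit sphere of $\R^{\dimp}$; the usual volume argument yields $|N|\le 5^{\dimp}$ and hence $\log|N|\le 2\dimp$. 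The elementary inequality $\|v\|\le 2\max_{\gammav\in N}\gammav^{\T}v$ then gives
\begin{EQA}[c]
\frac{1}{6\rhor\nu_{1}}\|\breve\UU(\ups)\|-2\|\DF(\ups-\upss)\|^{2}
\le \max_{\gammav\in N}\left\{\frac{1}{3\rhor\nu_{1}}\gammav^{\T}\breve\UU(\ups)-2\|\DF(\ups-\upss)\|^{2}\right\}.
\end{EQA}

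For fixed $\gammav\in N$ I would define the scalar process $\UP_{\gammav}(\ups)\eqdef\gammav^{\T}\breve\UU(\ups)/\rhor$. Condition \eqref{zsmu} applied with $\uv=\gammav$ shows that $\UP_{\gammav}$ satisfies the increment hypothesis $\bb{(\CS d)}$ of Theorem \ref{theo: cor 2.5 of spokoiny} with metric $d(\ups,\upsd)=\|\DF(\ups-\upsd)\|$ and parameter $\nunu=\nu_{1}$; moreover $\UP_{\gammav}(\upss)=0$ by the assumption $\breve\UU(\upss)=0$, and the required monotonicity of $\sup_{\ups\in\Upss(\rr)}\UP_{\gammav}(\ups)$ in $\rr$ follows from the inclusion $\Upss(\rr)\subset\Upss(\rr^{*})$ for $\rr\le\rr^{*}$. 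Applying Theorem \ref{theo: cor 2.5 of spokoiny} to each $\UP_{\gammav}$ at confidence level $\xx+\log|N|$ and then taking a union bound over the $|N|$ elements of the net yields, with probability at least $1-\ex^{-\xx}$,
\begin{EQA}[c]
\max_{\gammav\in N}\sup_{\ups\in\Upss(\rr^{*})}\left\{\frac{1}{3\nu_{1}}\UP_{\gammav}(\ups)-\|\DF(\ups-\upss)\|^{2}\right\}\le \zzQ(\xx+\log|N|,\,2\dimtotal)^{2},
\end{EQA}
where the value $2\dimtotal$ is an upper bound on the entropy of the Euclidean ball $\Upss(\rr^{*})\subset\R^{\dimtotal}$ under the metric $d$. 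Dropping an additional nonpositive term $-\|\DF(\ups-\upss)\|^{2}$ on the left and combining with the net inequality gives the claimed bound with right-hand side $\zzQ(\xx+\log|N|,\,2\dimtotal)^{2}$. Since by \eqref{eq: entropy bound quad correct} the function $\zzQ(\cdot,\cdot)$ depends on its two arguments only through their sum, and since $\log|N|+2\dimtotal\le 2\dimp+2\dimtotal$, this is dominated by $\zzQ(\xx,\,2\dimtotal+2\dimp)^{2}$.

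The main technical obstacle I expect is the careful bookkeeping of constants. The factor $2$ lost when passing from the full sphere to the $1/2$-net is exactly what explains the coefficient $1/(6\rhor\nu_{1})$ in the statement (rather than $1/(3\rhor\nu_{1})$), and the factor $2$ in front of $\|\DF(\ups-\upss)\|^{2}$ is produced for free by dropping a single nonpositive term after applying the corollary. One also needs to verify that a covering of the Euclidean ball in $\R^{\dimtotal}$ under the $\DF$-metric can be chosen with log-cardinality bounded by $2\dimtotal$, so that the constants in \eqref{eq: entropy bound quad correct} combine to produce the clean form $\zzQ(\xx,\,2\dimtotal+2\dimp)^{2}$; this is standard, but it is the step where the precise constants of the corollary have to be traced.
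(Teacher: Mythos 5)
Your proposal is correct, and it reaches the stated bound by a genuinely different route than the paper. The paper does not discretize: it writes \(\|\breve\UU(\ups)\|\) as a supremum of \(\uv^{\T}\breve\UU(\ups)/(\omega\|\DF(\ups-\upss)\|)\) over the \emph{continuous} dual ball \(\|\uv\|\le\|\DF(\ups-\upss)\|\), lifts the problem to the product set \(U(\rr)=\Upss(\rr)\times B_{\rr}(0)\subset\R^{\dimtotal+\dimp}\), proves via a H\"older-splitting argument (Lemma \ref{lem: exp moments for scalarproduct with gradient norm}) that the lifted scalar process satisfies the increment condition \(\bb{(\CS d)}\) with constants \(2\nunu\) and \(\gm/2\), and then applies Theorem \ref{theo: cor 2.5 of spokoiny} once, with the \(2\dimp\) entering as genuine entropy of the dual ball and the penalty \(2\rr^{2}\) arising as \(\|\DF(\ups-\upss)\|^{2}+\|\uv\|^{2}\). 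You instead keep the chaining bound scalar-valued throughout: a \(1/2\)-net of the dual unit sphere, one application of Theorem \ref{theo: cor 2.5 of spokoiny} per net point at level \(\xx+\log|N|\), and a union bound, with the factor \(2\) loss coming from the net approximation rather than from the H\"older splitting, and with \(\log|N|\le 2\dimp\) converted into the entropy argument through the fact that \(\zzQ(\xx,\entrl)\) depends only on \(\xx+\entrl\) (note both routes produce the same \(\gmd=\nunu\gm\), so the comparison of the two \(\zzQ\) values is legitimate). Your constants check out — indeed your argument gives the marginally sharper \(\zzQ(\xx,2\dimtotal+\dimp\log 5)^{2}\) and would even tolerate the penalty \(\rr^{2}\) in place of \(2\rr^{2}\) — and it dispenses with the auxiliary exponential-moment lemma; what the paper's lifting buys is that the same template extends directly to the operator-norm bound of Theorem \ref{theo: bound for sup spectral norm of hessian}, where two dual vectors are lifted simultaneously.
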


\begin{remark}
Note that the entropy of the original set \(\Upss(\rr)\subset\R^{\dimtotal}\) is equal to \(2\dimtotal\). So in order to control the norm \( \| \breve\UU(\ups) \|\) one only pays with the additional sumand \(2\dimp\).
\end{remark}

\begin{proof}
In what follows, we use the representation
\begin{EQA}[c]
    \| \breve\UU(\ups) \|
    =
    \omega \sup_{\|\uv\|\le \|\DF(\ups-\upss)\|} \frac{1}{\omega\|\DF(\ups-\upss)\|}\uv^{\T} \breve\UU(\ups) .
\label{UU2ups}
\end{EQA}    
This implies
\begin{EQA}[c]
    \sup_{\ups \in \Upss(\rr)} \| \breve\UU(\ups) \|
    =
     \omega\sup_{\ups \in \Upss(\rr)} \sup_{\|\uv\|\le \|\DF(\ups-\upss)\|} \frac{1}{\omega\|\DF(\ups-\upss)\|}
        \uv^{\T} \breve\UU(\ups) .
\label{UU2vups}
\end{EQA}   
Due to Lemma \ref{lem: exp moments for scalarproduct with gradient norm} the process \(\UP(\rr,\ups,\uv)\eqdef \frac{1}{\omega\|\DF(\ups-\upss)\|}\uv^{\T} \breve\UU(\ups) \) satisfies condition \({(\CS d)}\) (see \eqref{eq: uu bound in corollary 2.5}) as process on \(U(\rr^*)\) where
\begin{EQA}[c]
\label{eq: def of ups u supremumset}
U(\rr)\eqdef \Upss(\rr)\times B_{\rr}(0).
\end{EQA}
Further \(\sup_{(\ups,\uv)\in U(\rr)}\UP(\rr,\ups,\uv)\) is increasing in \(\rr\). 
This allows to apply Theorem \ref{theo: bound for norm quad corrected} to obtain the desired result. Set \(d((\ups,\uv),(\upsd,\uv^\circ))^2=\|\DF(\upsilonv-\upsilonvs)\|^2+\|\uv-\uv^\circ\|^2\). 
We get on a set of probability greater \(1-\ex^{-\xx}\) 
\begin{EQA}
&&\nquad	\sup_{(\ups,\uv)\in U(\rr^*)} \,\, 
		\left\{ 
			\frac{1}{6 \rhor\nu_1 \|\DF(\ups-\upss)\|} \uv^{\T} \breve\UU(\ups) - \|\DF(\upsilonv-\upsilonvs)\|^2-\|\uv\|^2
			\right\}\\
	& \leq &
	\zzQ\Big(\xx,\entrl\bigl(U(\rr^*) \bigr)\Big).
\label{bouuvupsdx}
\end{EQA}
The constant \(\entrl\bigl(U(\rr^*)\bigr)>0\) quantifies the complexity of the set \(U(\rr^*)\subset \R^{\dimtotal}\times \R^{\dimp}\). We point out that for compact \(M\subset \R^{\dimtotal}\) we have \(\entrl(M)= 2 \dimtotal\) (see Supplement of \cite{SP2011}, Lemma 2.10). This gives \(\entrl\bigl(U\bigr)=2\dimtotal+2\dimp\). Finally
observe that
\begin{EQA}
&&\nquad\sup_{\rr\le \rr^*}\sup_{\ups \in \Upss(\rr)}\left\{\frac{1}{6 \rhor\nu_1 } \| \breve\UU(\ups) \|-2\rr^2\right\}\\
	&\le& \sup_{\rr\le \rr^*} \sup_{(\ups,\uv)\in U(\rr)} \,\,  
		\left\{ \frac{1}{6 \rhor\nu_1 \|\DF(\ups-\upss)\|}\uv^{\T} \breve\UU(\ups) - \|\DF(\upsilonv-\upsilonvs)\|^2-\|\uv\|^2
			\right\}\\
	&=&	\sup_{(\ups,\uv)\in U(\rr^*)}  \,\, 
		\left\{ 
			\frac{1}{6 \rhor\nu_1\|\DF(\ups-\upss)\|} \uv^{\T} \breve\UU(\ups) - \|\DF(\upsilonv-\upsilonvs)\|^2-\|\uv\|^2
			\right\}.
\end{EQA}
\end{proof}

\begin{lemma}
\label{lem: exp moments for scalarproduct with gradient norm}
 Suppose that \( \breve\UU(\ups) \) satisfies for each \( \| \uv \| \le 1 \) and \(|\lambda|\le \gm\) the inequality \eqref{zsmu}. Then the process \(\UP(\ups,\uv)=\frac{1}{2\omega\|\DF(\ups-\upss)\|}\breve\UU(\ups)^{\T}\uv_{1}\) satisfies \(\bb{(\CS d)} \) from \eqref{eq: uu bound in corollary 2.5} with \(|\lambda| \leq \gm/2 \), \(d((\ups,\uv),(\upsd,\uv^\circ))^2=\|\DF(\upsilonv-\upsilonvs)\|^2+\|\uv-\uv^\circ\|^2\), \(\nu=2\nu_0\) and \(U\subset \R^{\dimtotal+\dimp}\) defined in \eqref{eq: def of ups u supremumset}, i.e. for any  \( (\ups,\uv_1),(\upsd,\uv_2)\in U\)
\begin{EQA}
	\log \E \exp\biggl\{ 
		\lambda 
        \frac{\UP(\ups,\uv_{1})-\UP(\upsd,\uv_{2})}{d((\ups,\uv_1),(\upsd,\uv_2))} 
	\biggr\}
	& \leq & \frac{\nunu^{2} \lambda^{2}}{2} ,
	\qquad
	|\lambda| \leq \gm/2 .
\end{EQA}
\end{lemma}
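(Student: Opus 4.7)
The plan is to bound the moment generating function of the normalized increment $[\UP(\ups,\uv_1)-\UP(\upsd,\uv_2)]/d$ by decomposing the numerator into two pieces, each of which inherits an exponential moment bound from \eqref{zsmu}, and then combining via Cauchy--Schwarz. Concretely I would first split along the $\uv$- and $\ups$-directions,
\begin{EQA}
\UP(\ups,\uv_1)-\UP(\upsd,\uv_2) &=& T_1+T_2,\\
T_1 &\eqdef& \frac{(\uv_1-\uv_2)^{\T}\breve\UU(\ups)}{2\omega\|\DF(\ups-\upss)\|},\\
T_2 &\eqdef& \uv_2^{\T}\left(\frac{\breve\UU(\ups)}{2\omega\|\DF(\ups-\upss)\|}-\frac{\breve\UU(\upsd)}{2\omega\|\DF(\upsd-\upss)\|}\right).
\end{EQA}

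The term $T_1$ is immediate: using $\breve\UU(\upss)=0$, condition \eqref{zsmu} applied to the pair $(\ups,\upss)$ with the unit vector $(\uv_1-\uv_2)/\|\uv_1-\uv_2\|$ gives a sub-Gaussian bound with proxy of order $\nu_0\|\uv_1-\uv_2\|/(2d)\le \nu_0/2$, admissible for $|\lambda|\le 2\gm$. The term $T_2$ is the delicate piece and I would decompose it further as
\begin{EQA}
T_2 = \frac{\uv_2^{\T}(\breve\UU(\ups)-\breve\UU(\upsd))}{2\omega\|\DF(\ups-\upss)\|} + \frac{\uv_2^{\T}\breve\UU(\upsd)}{2\omega}\left(\frac{1}{\|\DF(\ups-\upss)\|}-\frac{1}{\|\DF(\upsd-\upss)\|}\right).
\end{EQA}
The first summand is handled by \eqref{zsmu} applied to the pair $(\ups,\upsd)$ with unit vector $\uv_2/\|\uv_2\|$, yielding an exponential moment scaled by the factor $\|\DF(\ups-\upsd)\|/\|\DF(\ups-\upss)\|$, which is bounded by the ratio of $\|\DF(\ups-\upsd)\|\le d$ to $\|\DF(\ups-\upss)\|$. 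The second summand is controlled via the elementary estimate
\begin{EQA}
\left|\frac{1}{\|\DF(\ups-\upss)\|}-\frac{1}{\|\DF(\upsd-\upss)\|}\right| \le \frac{\|\DF(\ups-\upsd)\|}{\|\DF(\ups-\upss)\|\,\|\DF(\upsd-\upss)\|},
\end{EQA}
combined with \eqref{zsmu} applied to the pair $(\upsd,\upss)$, which gives a sub-Gaussian bound on $\uv_2^{\T}\breve\UU(\upsd)/(\omega\|\DF(\upsd-\upss)\|)$ with proxy $\nu_0\|\uv_2\|$; the surplus factor $\|\DF(\ups-\upsd)\|$ cancels against $d$.

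Finally I would combine the resulting sub-Gaussian moment bounds by Cauchy--Schwarz, using $\log\E\exp\{\lambda(X_1+X_2)\}\le \tfrac12\log\E\exp\{2\lambda X_1\}+\tfrac12\log\E\exp\{2\lambda X_2\}$, which doubles the variance proxy and halves the admissible range, producing exactly $\nu=2\nu_0$ on $|\lambda|\le\gm/2$. The main obstacle is book-keeping: one must track that each of the ratios $\|\DF(\ups-\upss)\|, \|\DF(\upsd-\upss)\|,\|\DF(\ups-\upsd)\|,\|\uv_1-\uv_2\|,\|\uv_2\|$ produced by the decompositions is absorbed either by $d((\ups,\uv_1),(\upsd,\uv_2))$ (whose definition already encodes the two natural scales in the problem) or by a denominator that appears naturally in \eqref{zsmu}, and in particular that the degenerate case $\upsd\to\upss$ does not ruin the second summand of $T_2$ since $\breve\UU(\upsd)$ vanishes at the matching rate.
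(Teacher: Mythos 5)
Your overall plan coincides with the paper's: split the increment into a \(\uv\)-part and a \(\ups\)-part, handle the mismatch of the two normalizations through the elementary bound \(\bigl|\|\DF(\ups-\upss)\|^{-1}-\|\DF(\upsd-\upss)\|^{-1}\bigr|\le \|\DF(\ups-\upsd)\|/\bigl(\|\DF(\ups-\upss)\|\,\|\DF(\upsd-\upss)\|\bigr)\) together with \(\breve\UU(\upss)=0\), and recombine the exponential moments by H\"older. The gap is in the bookkeeping of exactly the delicate ratio this lemma is about. In both pieces of your \(T_2\) the vector \(\uv_2\) is paired with the normalization of the \emph{other} point: for the first piece the scaling against \eqref{zsmu} at the pair \((\ups,\upsd)\) is \(\|\uv_2\|\,\|\DF(\ups-\upsd)\|/\bigl(2\|\DF(\ups-\upss)\|\,d\bigr)\le \|\uv_2\|/\bigl(2\|\DF(\ups-\upss)\|\bigr)\), and for the second piece, after \(\|\DF(\ups-\upsd)\|\) cancels against \(d\), the proxy is again of order \(\nunu\,\|\uv_2\|/\|\DF(\ups-\upss)\|\). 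On the set \(U(\rr)=\Upss(\rr)\times B_{\rr}(0)\) of \eqref{eq: def of ups u supremumset} this ratio is unbounded: let \(\ups\) approach \(\upss\) while \(\|\uv_2\|\asymp\rr\) and \(\|\DF(\upsd-\upss)\|\asymp\rr\); then both of your exponential-moment bounds blow up. So the dangerous degeneracy is \(\ups\to\upss\), not \(\upsd\to\upss\), and your closing remark addresses the wrong one — \eqref{zsmu} controls \(\breve\UU(\upsd)\) only relative to \(\|\DF(\upsd-\upss)\|\), never relative to \(\|\DF(\ups-\upss)\|\).

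The paper closes this hole with two ingredients you never invoke: the directions are effectively constrained by \(\|\uv\|\le\|\DF(\ups-\upss)\|\) (this is how the pairs arise in the representation used in the proof of Theorem \ref{theo: bound for norm quad corrected}, and the paper's proof opens with the corresponding ``w.l.o.g.\ \(\|\uv_1\|\le\|\DF(\ups-\upss)\|\le\|\DF(\upsd-\upss)\|\)''), and the two pairs are ordered so that the vector with the smaller bound is the one divided by the smaller radius: the \(\ups\)-increment is taken with \(\uv_1\) (so \(\|\uv_1\|/\|\DF(\ups-\upss)\|\le 1\)) and the \(\uv\)-increment is taken at \(\upsd\), normalized by \(\|\DF(\upsd-\upss)\|\) and bounded by \(\|\uv_1-\uv_2\|/d\le1\). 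Your decomposition can be repaired by the symmetric w.l.o.g.\ \(\|\DF(\upsd-\upss)\|\le\|\DF(\ups-\upss)\|\) combined with the constraint \(\|\uv_2\|\le\|\DF(\upsd-\upss)\|\), but without stating these the step fails. A smaller point: you end up with three sub-Gaussian pieces, so the plain two-term Cauchy--Schwarz you quote does not by itself give the claimed proxy \(2\nunu\) on \(|\lambda|\le\gm/2\); you either need the weighted split of the paper (one half for the \(\uv\)-increment, the other half absorbing both \(\ups\)-pieces) or you land at the weaker constants \(3\nunu\) and \(|\lambda|\le\gm/3\), as in the three-piece spectral-norm analogue, Lemma \ref{lem: exp moments for scalarproduct with gradient spectral}.
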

\begin{proof}
Let \( (\ups,\uv_1),(\upsd,\uv_2)\in U\) and w.l.o.g. \(\uv_1\le \|\DF(\ups-\upss)\|\le \|\DF(\upsd-\upss)\|\). By the H\"older inequality and \eqref{zsmu}, we find
\begin{EQA}
	&& \nquad
	\log \E \exp\biggl\{ 
		\lambda
        \frac{\UP(\ups,\uv_{1})-\UP(\ups,\uv_{2})}{d((\ups,\uv_1),(\upsd,\uv_2))} 
	\biggr\}
	\\
	&= &
	\log \E \exp\biggl\{ 
		\lambda
         \frac{\UP(\ups,\uv_{1})-\UP(\upsd,\uv_{1})+\UP(\upsd,\uv_{1})-\UP(\upsd,\uv_{2})}{d((\ups,\uv_1),(\upsd,\uv_2))}
	\biggr\}
	\\
	& \leq &
	\frac{1}{2}\log \E \exp\biggl\{ 
		2\lambda
         \frac{\uv_1^{\T} \bigl(\frac{1}{\|\DF(\ups-\upss)\|}\breve\UU(\ups)-\frac{1}{\|\DF(\upsd-\upss)\|}\breve\UU(\upsd)\bigr)}{\rhor\|\DF(\ups-\upsd)\|}
	\biggr\}\\
		&&+\frac{1}{2}\log \E \exp\biggl\{ 
		2\lambda
         \frac{(\uv_1^{\T}-\uv_2^{\T}) \breve\UU(\upsd)}{\rhor\|\uv_1-\uv_2\|\|\DF(\ups-\upss)\|}
	\biggr\}
	\\
	& \leq &
	\sup_ {\|\uv\|\le 1} \frac{1}{2}\log \E \exp\biggl\{ 
		2\lambda 
         \frac{\uv^{\T}\big (\breve\UU(\ups)-\breve\UU(\upsd)\bigr)}{\rhor\|\DF(\ups-\upsd)\|}
	\biggr\}\\
	 &&+\sup_{\|\uv\|\le 1}\frac{1}{2}\log \E \exp\biggl\{ 
		2\lambda 
         \frac{\uv^{\T}\bigl( \breve\UU(\upsd)-\breve\UU(\upss)\bigr)}{\rhor\|\DF(\ups-\upss)\|}
	\biggr\}
	\\
	& \leq &\frac{4\nunu^{2} \lambda^{2}}{2} ,
	\qquad
	\lambda \leq \gm/2 .
\end{EQA}
\end{proof}

\section{A bound for the sprectal norm of a random matrix process}
\label{ap: bound for spectral norm of hessian}
We want to derive for a random process \(\breve\UU(\ups)\in\R^{\dimtotal\times \dimtotal}\) a bound of the kind
\begin{EQA}[c]
	\P\left(\sup_{\ups \in \Upss(\rr)}\left\{ \| \breve\UU(\ups) \|\right\}
	\ge
	\CONST \rhor_2 \zzq(\xx,\dimtotal)\rr\right)
	\le \ex^{-\xx}.
\end{EQA}
We derive such a bound in a very similar manner to Theorem E.1 of \cite{AASP2013}.

We want to apply Corollary 2.2 of the supplement of \cite{SP2011}. Again we slightly generalized the formulation but the proof remains the same. 
\begin{corollary}
\label{cor: cor 2.2 of spokoiny}
Let \( (U(\rr))_{0\le \rr\le \rr^*}\subset \R^{\dimp}\) be a sequence of balls around \(\upss\) induced by the metric \(d(\cdot,\cdot)\). Let a random real valued process \( \UP(\ups) \) 
fulfill that \( \UP(\upss) = 0 \) and 
\begin{description}
\item[\(\bb{(\CS d)} \)] For any  \( \ups,\upsd\in U(\rr)\)
\begin{EQA}
	\log \E \exp\biggl\{ 
		\lambda 
        \frac{\UP(\ups)-\UP(\upsd)}{d(\ups,\upsd)} 
	\biggr\}
	& \leq & \frac{\nunu^{2} \lambda^{2}}{2} ,
	\qquad
	|\lambda| \leq \gm .
\label{eq: uu bound in corollary 2.2}
\end{EQA}
\end{description}
Then for each \( 0\le \rr \le \rr^* \), on a set of probability greater \(1-\ex^{-\xx}\)
\begin{EQA}
	\sup_{\ups \in U(\rr)} \UP(\ups)
	& \leq &
	3 \nu_1 \zzq(\xx,\dimtotal)^2 d(\ups,\upss),
\label{bouuvupsrupsdx22}
\end{EQA}
where \(\zzq(\xx,\dimtotal)\eqdef \entrl( U(\rr^*))\) denotes the entropy of the set \( U(\rr^*)\subset \R^{\dimp}\) and where with \( \gmd = \nunu \gmb \) and for some \(\entrl>0\)
\begin{EQA}[c]
    \zzq(\xx,\entrl)
    \eqdef
    \begin{cases}
    \sqrt{2(\xx + \entrl)} 
        & \text{if }\sqrt{2(\xx + \entrl)} \le \gmd, \\
    \gmd^{-1} (\xx + \entrl) + \gmd /2
        & \text{otherwise} .
  \end{cases}
\label{eq: entropy bound}
\end{EQA}   
\end{corollary}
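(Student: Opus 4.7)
The plan is to reduce to a uniform bound on the normalized process $V(\ups):=\UP(\ups)/d(\ups,\upss)$ and then run a generic chaining argument. Since $\UP(\upss)=0$, specialising condition $(\CS d)$ of \eqref{eq: uu bound in corollary 2.2} at $\upsd=\upss$ immediately yields, for every fixed $\ups\in U(\rr^*)$,
\begin{EQA}[c]
	\log \E \exp\bigl\{ \lambda V(\ups) \bigr\} \le \nunu^{2} \lambda^{2}/2, \qquad |\lambda|\le \gm.
\end{EQA}
A Cram\'er--Chernoff argument then gives a Gaussian tail up to $t\le \gmd=\nunu\gm$ and a Bernstein-type tail of the form $\exp\bigl(-\gm t/\nunu + \gmd^{2}/2\bigr)$ beyond; these two regimes are precisely what produce the two-piece definition of $\zzq(\xx,\entrl)$ in \eqref{eq: entropy bound}.

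The second and main step is to lift this pointwise bound to a supremum over $U(\rr^*)$ that is linear in $d(\ups,\upss)$. I would use a Dudley/Talagrand-style chaining: pick a nested sequence of approximating sets $N_{0}\subset N_{1}\subset\cdots\subset U(\rr^{*})$ with $\log|N_{k}|\le \entrl(U(\rr^{*}))\cdot 2^{k}$ and $N_{0}=\{\upss\}$, and telescope
\begin{EQA}[c]
	\UP(\ups) \,=\, \sum_{j\ge 0}\bigl(\UP(\pi_{j+1}\ups) - \UP(\pi_{j}\ups)\bigr), \qquad \pi_{j}\ups\in N_{j}.
\end{EQA}
By condition $(\CS d)$ each telescoping increment is $d$-sub-Gaussian with scale $\nunu\,d(\pi_{j+1}\ups,\pi_{j}\ups)$, so a union bound at each level plus summation of the geometric series in the chaining radii produces a supremum bound of order $\nunu\,\zzq(\xx,\entrl(U(\rr^*)))^{2}\cdot d(\ups,\upss)$. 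Inserting the classical estimate $\entrl(U(\rr^*))\le 2\dimtotal$ for a Euclidean ball (Supplement of \cite{SP2011}, Lemma~2.10) collapses the entropy factor to $\dimtotal$, and the constant $3\nu_{1}$ absorbs the geometric chaining losses.

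The main obstacle is the crossover between the sub-Gaussian and the sub-exponential regimes at the scale $t\sim\gmd$. Once $\sqrt{2(\xx+\entrl)}>\gmd$ the pure Gaussian moment control no longer suffices and each chained increment must instead be controlled by its Bernstein tail, producing the linear part $\gmd^{-1}(\xx+\entrl)+\gmd/2$ in \eqref{eq: entropy bound}. This requires splitting each increment $\UP(\pi_{j+1}\ups)-\UP(\pi_{j}\ups)$ into a sub-Gaussian part truncated at threshold $\gmd\,d(\pi_{j+1}\ups,\pi_{j}\ups)$ plus a sub-exponential remainder, with the truncation level tied to the current chaining scale so that the two tail estimates jointly survive the union bound at every level. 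This bookkeeping is the technical heart of the proof, and since it reproduces verbatim the argument of Corollary~2.2 in the supplement of \cite{SP2011}, I would simply import their construction rather than rederive it here.
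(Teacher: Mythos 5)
Your proposal matches the paper's treatment: the paper gives no independent proof of this corollary but simply cites Corollary~2.2 of the supplement of \cite{SP2011}, noting that the slightly generalized formulation leaves the proof unchanged, and you likewise import that construction after a reasonable sketch of the pointwise sub-Gaussian/sub-exponential bound and the entropy-based chaining step. So the proposal is correct and takes essentially the same route as the paper.
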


To use this result let \(\UU(\ups) \) be a smooth centered random process with values in 
\( \R^{\dimtotal\times\dimtotal} \) and let \(\DF: \R^{\dimtotal}\to \R^{\dimtotal}\) be some linear operator. 
We aim at bounding the maximum of the spectral norm \( \| \UU(\ups) \| \)
over a vicinity \( \Upss(\rr)\eqdef \{\|\ups - \upss\|_{\UU}\le \rr\} \) of \( \upss \).
Suppose that \( \UU(\ups) \) satisfies \(\UU(\upss)=0\) and for each \( 0<\rr<\rr^{*} \) and for all pairs \( \ups,\upsd \in \Upss(\rr) = \bigl\{ \ups \in \Ups \colon \|\ups - \upss\|_{\UU}\leq \rr \bigr\}\subset \R^{\dimtotal} \)

\begin{EQA}
\label{eq: spectral exp bound}
    \sup_{\|\uv_1\|\le 1}\sup_{\|\uv_2\|\le 1} \log \E \exp \biggl\{
        \lambda \frac{\uv_1^{\T}\bigl(\UU(\ups)-\UU(\upsd)\bigr)\uv_2}{\rhor_2\|\DF(\ups - \upsd) \|}
    \biggr\}
    & \le &
    \frac{\nu_2^{2} \lambda^{2}}{2} .
\end{EQA}

\begin{remark}
In the setting of Theorem~\ref{theo: convergence to MLE} we have \(\|\ups - \upsd\|_{\UU}=\|\DF(\ups-\upsd)\|\) and
\begin{EQA}[c]
\UU(\upsilonv)=\DF^{-1}\nabla^2\zetav(\upsilonv)-\DF^{-1}\nabla^2\zetav(\upsilonvs),
\end{EQA}
and condition \eqref{eq: spectral exp bound} becomes \({(\CS \DF_{2})}\) from \ref{sec: conditions}. 
\end{remark}

\begin{theorem}
\label{theo: bound for sup spectral norm of hessian}
Let a random process \( \UU(\ups)\in  \R^{\dimtotal\times\dimtotal} \) 
fulfill \(\UU(\upss) = 0 \) and let condition \eqref{eq: spectral exp bound} be satisfied. 
Then for each \(0\le \rr \le\rr^* \), on a set of probability greater \(1-\ex^{-\xx}\)
\begin{EQA}
	\sup_{\ups \in \Upss(\rr)}\|\UU(\ups) \|
	& \leq &
	9 \rhor_2\nu_2\zzq(\xx,6\dimtotal)\rr,
\label{bouuvupsrupsdx22}
\end{EQA}
with \( \gmd = \nu_0 \gmb \).
\end{theorem}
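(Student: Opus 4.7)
The plan is to mimic the proof of Theorem \ref{theo: bound for norm quad corrected}, but applying Corollary \ref{cor: cor 2.2 of spokoiny} (which yields a linear-in-$d$ deviation bound) in place of Theorem \ref{theo: cor 2.5 of spokoiny} (which gives a quadratic bound). The spectral norm will be represented as a supremum of a real-valued bilinear process over an enlarged index set, and then the corollary will be invoked.

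First I would write
\begin{EQA}[c]
\sup_{\ups\in\Upss(\rr)}\|\UU(\ups)\|
=
\sup_{(\ups,\uv_1,\uv_2)\in V(\rr)}\uv_1^{\T}\UU(\ups)\uv_2,\qquad
V(\rr)\eqdef\Upss(\rr)\times B_1(0)\times B_1(0)\subset\R^{3\dimtotal},
\end{EQA}
and define the scalar process $\UP(\ups,\uv_1,\uv_2)\eqdef\uv_1^{\T}\UU(\ups)\uv_2$ on $V(\rr)$. Note that $\UP(\upss,0,0)=0$ because $\UU(\upss)=0$, and that $V(\rr)$ is the product of three compact balls in $\R^{\dimtotal}$, so its entropy is bounded by $6\dimtotal$ (using the computation cited from the supplement of \cite{SP2011} in the proof of Theorem \ref{theo: bound for norm quad corrected}).

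The main step is to verify the exponential moment condition $(\CS d)$ for $\UP$ with an appropriate metric. For any two points $(\ups,\uv_1,\uv_2),(\upsd,\uv_1',\uv_2')\in V(\rr)$ I would split
\begin{EQA}[c]
\UP(\ups,\uv_1,\uv_2)-\UP(\upsd,\uv_1',\uv_2')
=\uv_1^{\T}\bigl(\UU(\ups)-\UU(\upsd)\bigr)\uv_2
+(\uv_1-\uv_1')^{\T}\UU(\upsd)\uv_2
+{\uv_1'}^{\T}\UU(\upsd)(\uv_2-\uv_2'),
\end{EQA}
and rewrite the last two pieces using $\UU(\upss)=0$ as $(\uv_1-\uv_1')^{\T}(\UU(\upsd)-\UU(\upss))\uv_2$ and ${\uv_1'}^{\T}(\UU(\upsd)-\UU(\upss))(\uv_2-\uv_2')$. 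Each of the three summands, divided respectively by $\rhor_2\|\DF(\ups-\upsd)\|$, $\rhor_2\|\uv_1-\uv_1'\|\|\DF(\upsd-\upss)\|$ and $\rhor_2\|\uv_2-\uv_2'\|\|\DF(\upsd-\upss)\|$, satisfies the sub-Gaussian bound with variance proxy $\nu_2^2$ by \eqref{eq: spectral exp bound} (using $\|\uv_i\|,\|\uv_i'\|\le 1$). A Hölder three-term argument, exactly as in Lemma \ref{lem: exp moments for scalarproduct with gradient norm}, then yields $(\CS d)$ with variance proxy $3\nu_2^2$ for the metric
\begin{EQA}[c]
d\bigl((\ups,\uv_1,\uv_2),(\upsd,\uv_1',\uv_2')\bigr)^{2}
\eqdef
\rhor_2^{2}\bigl(\|\DF(\ups-\upsd)\|^{2}+\rr^{2}\|\uv_1-\uv_1'\|^{2}+\rr^{2}\|\uv_2-\uv_2'\|^{2}\bigr),
\end{EQA}
having used $\|\DF(\upsd-\upss)\|\le\rr$ on $V(\rr)$.

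Finally, on $V(\rr)$ the reference point $(\upss,0,0)$ satisfies $d\bigl((\ups,\uv_1,\uv_2),(\upss,0,0)\bigr)\le\sqrt{3}\,\rhor_2\rr$. Applying Corollary \ref{cor: cor 2.2 of spokoiny} to $\UP$ with entropy bound $6\dimtotal$ and effective noise level $\sqrt{3}\nu_2$ gives, with probability at least $1-\ex^{-\xx}$,
\begin{EQA}[c]
\sup_{\ups\in\Upss(\rr)}\|\UU(\ups)\|
\le 3\cdot\sqrt{3}\nu_2\cdot\zzq(\xx,6\dimtotal)\cdot\sqrt{3}\rhor_2\rr
=9\rhor_2\nu_2\zzq(\xx,6\dimtotal)\rr,
\end{EQA}
which is the claim. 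The only real obstacle is the bookkeeping of constants through the three-term Hölder splitting so that the final constant matches $9$ precisely; the choice of scaling $\rr^{2}$ on the $\uv_i$-parts of the metric is what makes the diameter and the variance factor combine cleanly to yield this constant.
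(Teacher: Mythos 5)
Your proposal is correct and follows essentially the same route as the paper's own proof: represent the spectral norm as a bilinear scalar process over a product set of entropy \(6\dimtotal\), verify \((\CS d)\) via a three-term H\"older split using \eqref{eq: spectral exp bound} and \(\UU(\upss)=0\) (the paper's Lemma \ref{lem: exp moments for scalarproduct with gradient spectral}), and apply Corollary \ref{cor: cor 2.2 of spokoiny}. Your only deviations are cosmetic — unit balls with an \(\rr\)-weighted metric instead of radius-\(\rr\) balls with a \(1/\rr^{2}\) normalization, and a slightly tidier split of the constant \(9\) into \(3\cdot\sqrt{3}\cdot\sqrt{3}\) — which do not change the argument.
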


\begin{remark}
Note that the entropy of the original set \(\Upss(\rr)\subset\R^{\dimtotal}\) is multiplied by 3. So in order to control the spectral norm \( \| \UU(\ups) \|\) one only pays with this factor.
\end{remark}

\begin{proof}
In what follows, we use the representation
\begin{EQA}[c]
    \| \UU(\ups) \|
    =
    \omega_2 \sup_{\|\uv_2\|\le \rr}\sup_{\|\uv_2\|\le \rr}  \frac{1}{\omega_2 \rr^2}\uv_1^{\T} \breve\UU(\ups)\uv_2 .
\label{UU2ups}
\end{EQA}    
This implies
\begin{EQA}[c]
    \sup_{\ups \in \Upss(\rr)} \| \UU(\ups) \|
    =
     \omega\sup_{\ups \in \Upss(\rr)} \sup_{\|\uv_2\|\le \rr}\sup_{\|\uv_2\|\le \rr}  \frac{1}{\omega \rr^2}\uv_1^{\T} \breve\UU(\ups)\uv_2.
\label{UU2vups}
\end{EQA}   
Due to Lemma \ref{lem: exp moments for scalarproduct with gradient spectral} the process \(\UP(\ups)\eqdef \frac{1}{\omega\rr^2}\uv_1^{\T} \UU(\ups) \uv_2\) satisfies condition \({(\CS d)}\) (see \eqref{eq: uu bound in corollary 2.2}) as process on
\begin{EQA}[c]
\label{eq: def of ups u supremumset spectral}
U(\rr)\eqdef\Upss(\rr)\times B_{\rr}(0)\times B_{\rr}(0) \subset \R^{3\dimtotal}.
\end{EQA}
This allows to apply Corollary \ref{cor: cor 2.2 of spokoiny} to obtain the desired result. We get on a set of probability greater \(1-\ex^{-\xx}\) 
\begin{EQA}[c]
\sup_{\ups\in¸\Upss(\rr)}\|\UU(\ups)\|\le \sup_{(\ups,\uv_1,\uv_2)\in U(\rr)} \,\, 
		\left\{\frac{1}{\rr^2}\uv_1^{\T} \UU(\ups)\uv_2 \right\} \leq 
	9 \rhor_2\nu_2 \zzq\Big(\xx,\entrl\bigl(U(\rr^*) \bigr)\Big)\rr.
\label{bouuvupsdx}
\end{EQA}
The constant \(\entrl\bigl(U(\rr)\bigr)>0\) quantifies the complexity of the set \(U(\rr)\subset \R^{3\dimtotal}\). We point out that for compact \(M\subset \R^{3\dimtotal}\) we have \(\entrl(M)= 6 \dimtotal\) (see Supplement of \cite{SP2011}, Lemma 2.10). This gives the claim.
\end{proof}

\begin{lemma}
\label{lem: exp moments for scalarproduct with gradient spectral}
Suppose that \( \UU(\ups)\in\R^{\dimtotal\times\dimtotal} \) satisfies \(\UU(\upss)=0\) and for each \( \| \uv_1 \| \le 1 \), \( \| \uv_2 \| \le 1 \)  and \(|\lambda|\le \gm\) the inequality \eqref{eq: spectral exp bound}. Then the process 
\begin{EQA}[c]
\UP(\ups,\uv_1,\uv_2)=\frac{1}{2\omega_2\rr^2}\uv_1^\T\UU(\ups)^{\T}\uv_{2}
\end{EQA}
satisfies \(\bb{(\CS d)} \) from \eqref{eq: uu bound in corollary 2.2} with \(U\subset \R^{3\dimtotal}\) defined in \eqref{eq: def of ups u supremumset spectral}, with \(|\lambda| \leq \gm/3 \) and with
\begin{EQA}[c]
d((\ups,\uv),(\upsd,\uv^\circ))^2=\|\DF(\upsilonv-\upsilonvs)\|^2+\|\uv_1-\uv_1^\circ\|^2+\|\uv_2-\uv_2^\circ\|^2,
\end{EQA}
i.e. for any  \( (\ups,\uv_1,\uv_2),(\upsd,\uv^\circ_1,\uv^\circ_2) \in U\)
\begin{EQA}
	\log \E \exp\biggl\{ 
		\lambda 
        \frac{\UP(\ups,\uv_{1},\uv_2)-\UP(\upsd,\uv^\circ_1,\uv^\circ_2)}{d((\ups,\uv_1,\ups_2),(\upsd,\uv^\circ_1,\uv^\circ_2))} 
	\biggr\}
	& \leq & \frac{9\nu_2^{2} \lambda^{2}}{2} ,
	\qquad
	|\lambda| \leq \gm/3 .
\end{EQA}
\end{lemma}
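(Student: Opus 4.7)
The plan is to mirror the proof of Lemma \ref{lem: exp moments for scalarproduct with gradient norm}, but with three coordinates to handle — $\ups$, $\uv_1$, and $\uv_2$ — rather than two. First I telescope the numerator as
\begin{EQA}[c]
\uv_1^\T \UU(\ups)^\T \uv_2 - (\uv_1^\circ)^\T \UU(\upsd)^\T \uv_2^\circ
= \uv_1^\T \bigl(\UU(\ups)-\UU(\upsd)\bigr)^\T\uv_2 + (\uv_1-\uv_1^\circ)^\T \UU(\upsd)^\T\uv_2 + (\uv_1^\circ)^\T \UU(\upsd)^\T(\uv_2-\uv_2^\circ),
\end{EQA}
divide by $2\omega_2\rr^2 d$ (with $d$ the metric in the statement), and apply H\"older's inequality with three equal conjugate exponents $1/3$ to obtain
\begin{EQA}[c]
\log \E \exp\!\biggl\{\lambda \frac{\UP(\ups,\uv_1,\uv_2)-\UP(\upsd,\uv_1^\circ,\uv_2^\circ)}{d}\biggr\}
\le \frac{1}{3}\sum_{i=1}^{3}\log \E \exp(3\lambda T_i),
\end{EQA}
where $T_i$ denotes the $i$-th telescoping term divided by $d$.

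Each $T_i$ is a deterministic scalar $\alpha_i$ times a random variable of exactly the form to which \eqref{eq: spectral exp bound} applies (note that the condition is symmetric in $\uv_1,\uv_2$, so transposing $\UU$ is harmless). For $T_1$, factoring out $\|\uv_j\|$ and $\|\DF(\ups-\upsd)\|$ gives such a form directly, with prefactor bounded by $1$ using $\|\uv_j\|\le\rr$, $\|\DF(\ups-\upsd)\|\le d$, and $\omega_2=\rhor_2$. For $T_2$ and $T_3$ the key step is to use the assumed normalization $\UU(\upss)=0$ to rewrite $\UU(\upsd)=\UU(\upsd)-\UU(\upss)$; condition \eqref{eq: spectral exp bound} then applies (with arguments $\upsd$ and $\upss$), and the corresponding prefactors are bounded analogously using $\|\uv_j-\uv_j^\circ\|\le d$ together with $\|\DF(\upsd-\upss)\|\le \rr$.

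With $|\alpha_i|\le 1$ and the hypothesis $|\lambda|\le \gm/3$, each argument $3\lambda\alpha_i$ lies in the admissible interval $[-\gm,\gm]$ and so each MGF factor is bounded by $\nu_2^{2}(3\lambda)^{2}/2$; averaging the three contributions then gives the claimed $\frac{9\nu_2^{2}\lambda^{2}}{2}$. The main technical point is pure bookkeeping — tracking three simultaneous normalizations and routing the pure-$\uv$ increments through $\UU(\upss)=0$ so that \eqref{eq: spectral exp bound} becomes applicable — and no probabilistic input is required beyond that condition.
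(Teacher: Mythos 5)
Your proposal is correct and follows essentially the same route as the paper's proof: the identical three-term telescoping in $\ups$, $\uv_1$, $\uv_2$, Hölder's inequality with exponent three, and the rewriting $\UU(\upsd)=\UU(\upsd)-\UU(\upss)$ so that condition \eqref{eq: spectral exp bound} (with $\|\uv_j\|\le\rr$, $\|\DF(\upsd-\upss)\|\le\rr$ and the metric bounds) covers the pure-$\uv$ increments, yielding the factor $9\nu_2^2\lambda^2/2$ for $|\lambda|\le\gm/3$.
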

\begin{proof}
Let \( (\ups,\uv_1,\uv_2),(\upsd,\uv^\circ_1,\uv^\circ_2)\in U\). By the H\"older inequality and \eqref{eq: spectral exp bound}, we find
\begin{EQA}
	&& \nquad
	\log \E \exp\biggl\{ 
		\lambda
         \frac{\UP(\ups,\uv_{1},\uv_2)-\UP(\upsd,\uv^\circ_1,\uv^\circ_2)}{d((\ups,\uv_1,\up_2),(\upsd,\uv^\circ_1,\uv^\circ_2))}
	\biggr\}
	\\
	&= &
	\log \E \exp\biggl\{ 
		\lambda\biggl(
          \frac{\UP(\ups,\uv_{1},\uv_2)-\UP(\upsd,\uv_1,\uv_2)}{d((\ups,\uv_1,\up_2),(\upsd,\uv^\circ_1,\uv^\circ_2))}+\frac{\UP(\upsd,\uv_1,\uv_2)-\UP(\upsd,\uv^\circ_1,\uv_2)}{d((\ups,\uv_1,\ups_2),(\upsd,\uv^\circ_1,\uv^\circ_2))} \\
          	&&+\frac{\UP(\upsd,\uv^\circ_1,\uv_2)-\UP(\upsd,\uv^\circ_1,\uv^\circ_2)}{d((\ups,\uv_1,\up_2),(\upsd,\uv^\circ_1,\uv^\circ_2))}\biggr)
	\biggr\}
	\\
	& \leq &
	\frac{1}{3}\log \E \exp\biggl\{ 
		3\lambda
         \frac{\uv_1^{\T} \bigl(\frac{1}{\rr^2}\breve\UU(\ups)-\frac{1}{\rr^2}\breve\UU(\upsd)\bigr)\uv_2}{\rhor_2\|\DF(\ups-\upsd)\|}
	\biggr\}\\
		&&+\frac{1}{3}\log \E \exp\biggl\{ 
		3\lambda
         \frac{(\uv_1-\uv^{\circ}_1)^{\T}) \UU(\upsd)\uv_2}{\rhor_2\|\uv_1-\uv_2\|\rr^2}
	\biggr\}
	\\
	&&+\frac{1}{3}\log \E \exp\biggl\{ 
		3\lambda
         \frac{(\uv^{\circ}_1)^{\T}) \UU(\upsd)(\uv_2-\uv_2^{\circ})}{\rhor_2\|\uv_1-\uv_2\|\rr^2}
	\biggr\}
	\\
	& \leq &\frac{1}{3}
	\sup_ {\|\uv_1\|\le 1}\sup_ {\|\uv_2\|\le 1} \log \E \exp\biggl\{ 
		3\lambda 
         \frac{\uv_1^{\T}\big (\UU(\ups)-\UU(\upsd)\bigr)\uv_2}{\rhor_2\|\DF(\ups-\upsd)\|}
	\biggr\}\\
	 &&+\frac{2}{3}\sup_ {\|\uv_1\|\le 1}\sup_ {\|\uv_2\|\le 1} \log \E \exp\biggl\{ 
		3\lambda 
         \frac{\uv_1^{\T}\big (\UU(\upsd)-\UU(\upss)\bigr)\uv_2}{\rhor_2\|\DF(\ups-\upss)\|}
	\biggr\}\\
	& \leq &\frac{9\nu_2^{2} \lambda^{2}}{2} ,
	\qquad
	\lambda \leq \gm/3 .
\end{EQA}
\end{proof}

\bibliography{../../bib/exp_ts,../../bib/listpubm-with-url,../../bib/semiquellen,../../sources/semiquellen}
 \end{document}